\documentclass[12pt, oneside]{article}   	
\usepackage{geometry}               		
\geometry{letterpaper, margin=1in}                   	

\usepackage{graphicx}
	
\usepackage{amssymb}
\usepackage{subcaption}
\usepackage{fancyhdr}
\usepackage{amsmath}
\usepackage{mathtools}
\usepackage{changepage}
\usepackage{dsfont}
\usepackage{float}
\usepackage{bm}
\usepackage{amsthm}
\usepackage{color}
\usepackage{enumerate} 
\usepackage{amsmath}
\usepackage{authblk}
\usepackage{subcaption}
\usepackage{float}
\usepackage{braket}

\usepackage{tikz}
\usetikzlibrary{shapes}
\usetikzlibrary{topaths}
\usetikzlibrary{patterns}
\usetikzlibrary{decorations.pathmorphing}

\fancyhf{}

\graphicspath{ {Figs/} }

\pagestyle{fancy}
\cfoot{\thepage}

\newtheorem{defn}{Definition}
\newtheorem{thm}{Theorem}
\newtheorem{lem}{Lemma}

\newtheorem{cor}{Corollary}

\newtheorem{question}{Question}

\DeclareMathOperator*{\argmax}{arg\!\max}

\DeclareMathOperator{\supp}{supp}

\newcommand{\R}{\mathds{R}}
\newcommand{\Z}{\mathds{Z}}
\newcommand{\T}{{\intercal}}
\newcommand{\mbc}{\bm{c}^{\T}}
\newcommand{\pol}{\{\bm{x}\in\R^n:A\bm{x}=\bm{b},\, D\bm{x}\leq \bm{d}\}}

\newcommand{\Depth}{2}
\newcommand{\Height}{2}
\newcommand{\Width}{2}

\author{Alexander Black$^*$}
\author{Jes\'us A. De Loera$^*$}
\author{Sean Kafer$^\star$}
\author{Laura Sanit\`a$^\diamond$}

\affil{$^*$University of California, Davis 
\\ \texttt{\{jadeloera,aeblack\}@ucdavis.edu}}

\affil{$^\star$University of Waterloo, Canada
\\ \texttt{skafer@uwaterloo.ca}}

\affil{$^\diamond$TU Eindhoven, Netherlands
\\ \texttt{l.sanita@tue.nl}}

\begin{document}

\title{On the Simplex method for $0/1$ polytopes}
\maketitle

\abstract{
We present new pivot rules for the Simplex method for LPs over $0/1$ polytopes.  We show that the number of non-degenerate steps 
taken using these rules is strongly polynomial and even linear in the dimension or in the number of variables. 
Our bounds on the number of steps are asymptotically optimal on several well-known combinatorial polytopes.
Our analysis is based on the geometry of $0/1$ polytopes and novel modifications to the classical 
Steepest-Edge and Shadow-Vertex pivot rules. We draw interesting connections between our 
pivot rules and other well-known algorithms in combinatorial optimization.}

\section{Introduction}\label{sec:introduction}

The Simplex method \cite{Dantzigbook} is one of the most popular algorithms for solving linear programs, but despite decades of study, it is still not known whether there exists a pivot rule choice that guarantees it will always reach an optimal solution with a polynomial number of pivots, or in other words, a polynomial number of \emph{basis exchanges}. In fact, a polynomial pivot rule is not even known for linear programs over $0/1$ polytopes. These are problems of the form $\max\{\bm c^\T \bm x: \bm x \in P\}$, where $P$ is a $0/1$ polytope, i.e., its extreme points have coordinates in $\{0,1\}$. This state of affairs remains despite the fact that the diameter of the graph of a $0/1$ polytope is bounded by the dimension of the polytope \cite{Nad89} (an obstacle for general polytopes) and $0/1$-linear programs arise frequently from combinatorial optimization problems.  This paper is an in-depth study of the behavior of the Simplex method for $0/1$-linear programs.

It is known (see \cite{Schulz-Weismantel-Ziegler}) that paths of (strongly) polynomial length on the 1-skeleton of a $0/1$ polytope can be constructed using \emph{any} augmentation oracle that yields an improving adjacent extreme point in (strongly) polynomial time. However, results of this type do not give insights on the performance of the Simplex method run over $0/1$-LPs because (a) selecting an improving adjacent extreme point is not any easier than solving the original LP itself; (b) using an oracle in this way may require modifying the original objective function, resulting in paths on the 1-skeleton that are not even monotone with respect to the original objective function (essentially, the path is not a Simplex path); (c) a path of (strongly) polynomial length on the 1-skeleton of a polytope does not immediately translate into a sequence of basis exchanges of polynomial length, because of degeneracy.

Degeneracy is indeed a very crucial challenge in the analysis of the Simplex method. Given a general LP, one can often assume that the LP is non-degenerate by, for example, applying perturbation techniques. However, $0/1$ polytopes are often highly degenerate and when working with $0/1$-LPs, perturbation changes $P$ into a polytope that is no longer $0/1$. Thus, perturbation cannot be performed without loss of generality to understand the behavior of the Simplex for the original $0/1$-LP. Degeneracy might make the Simplex method \emph{cycle} and hence not terminate \cite{vanderbei}. There are several pivot rules that can avoid cycling \cite{vanderbei,terlaky+zhang,MurtyLP,Terlaky2009}. However, none of these rules guarantee a polynomial bound on the number of degenerate basis exchanges, 
and they might even require an exponential number of pivots before moving to a different extreme point (this phenomenon is called \emph{stalling}). 
While cycling may be avoided easily, resolving stalling in polynomial time is as hard as solving the general problem of finding a polynomial pivot rule for the Simplex method (see e.g.,~\cite{Murty2009}). This observation holds even for $0/1$-LPs. A first stepping stone in resolving this problem is finding a pivot rule which at least guarantees a polynomial number of non-degenerate pivots. Hence, the following natural question arises as stated for instance in page 2 of \cite{Kortenkampetal97}:

\begin{question} \label{question:count_non_degenerate}
Is there a pivot rule for the Simplex method that guarantees a (strongly) polynomial number of non-degenerate pivots on $0/1$-LPs?
\end{question}

In this paper, we give a complete, affirmative answer to the above question for arbitrary $0/1$-LPs, even in the most general representation
\begin{equation}\label{lp}
	\max\set{\,\bm c^\T\bm x\ :\, A \bm x=\bm b,\, D \bm x\leq \bm d,\, \bm x\in\R^n\,},
\end{equation}
where $\bm c$, $\bm b$, $\bm d$, $A$, and $D$ are all assumed to be integral.  

In other words, we study the LP  $\max \set{\bm c^\T \bm x : \bm x \in P}$ where $P = \pol$ and the feasible region is a $0/1$ polytope of dimension $d$. 
Note that in some cases the dimension $d$ can be exponentially smaller than $n$, the number of variables of the representation. The integrality assumption 
for the input matrices and vectors is necessary for the analysis, but it does not affect the generality of our results as rational constraints can be scaled 
up to be made integral.  

\subsection*{Our results} 

Motivated by Question \ref{question:count_non_degenerate}, we analyze two types of procedures for finding optimal solutions to $0/1$-LPs that lead to an answer to this question. 
We carefully distinguish between traditional \emph{pivot rules} for the Simplex method \textendash\, which take as input a basis and output an adjacent basis, usually in the language of tableaux updates \textendash\ and the purely geometric \emph{edge rules} \textendash\, which take as input an extreme point solution and output an improving neighboring extreme point solution connected by an edge. In this paper, we will only and always use the term \textit{pivot rule} to refer to a procedure for how to transition from a basis to the next.  On the other hand, we may sometimes refer to an edge rule as simply a \textit{rule} for the sake of brevity.

This paper presents three pivot rules which require only a polynomial number of non-degenerate pivots to reach an optimal solution. Often we start by analyzing edge rules and proving that they use only a polynomial number of edges to reach an optimal solution. We then later discuss how to turn the edge selection process into a traditional Simplex pivot rule. Note that every pivot rule yields an edge rule, but the converse is not obvious. The selection of an edge at an extreme point (chosen over all possible edges) does not imply a way to choose a pivot at a basis in general.

In Section \ref{sec:steep-enough} we present our first pivot rule which is a modification of the Steepest-Edge pivot rule (see \cite{ForrestGoldfarb92} and references there). The recent work~\cite{deloera2020pivot} of the second, third, and fourth authors of the present paper analyzes a rule for $0/1$-LPs which always moves along a steepest edge, where steepness is measured according to the $1$-norm instead of the usual $2$-norm. They showed that this Steepest-Edge rule reaches an optimal solution of a $0/1$-LP in a strongly-polynomial number of steps.  

Unfortunately, this result does not directly describe the behavior of the $1$-norm Steepest-Edge pivot rule for the Simplex method, as this pivot rule is not guaranteed to follow the same path when implemented in tableaux because of degeneracy.  In particular, even for $0/1$ polytopes, the number of edges containing a vertex may be exponentially large, and the Simplex method does not consider all possible edge-directions when determining its pivot direction.  When the Simplex method with the Steepest-Edge Pivot 
Rule moves along an edge-direction at the current basic feasible solution (i.e., performs a non-degenerate pivot), that edge direction is not guaranteed 
to be the steepest one over all possible edge directions.  It is only guaranteed to be the steepest direction among the subset of directions that correspond to moving to adjacent bases (See Section~\ref{sec:steep-enough} for an example). 

Instead here we introduce the \emph{True Steepest-Edge pivot rule} for the Simplex method, which \emph{does} follow a path where every edge followed is indeed steepest in 1-norm among all edges as considered in~\cite{deloera2020pivot}. We prove the following:

\begin{thm}\label{thm:polynomial_pivots1}
On any $0/1$-LP of the form (\ref{lp}), the Simplex method with a True Steepest-Edge pivot rule 
reaches an optimal solution by performing a (strongly) polynomial number of non-degenerate pivots. 
Furthermore, when it performs a non-degenerate pivot at an extreme point 
$\bm x$, the edge along which it moves is a steepest edge-direction at $\bm x$.
\end{thm}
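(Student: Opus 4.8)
The plan is to build the True Steepest-Edge pivot rule in two layers: an \emph{edge-selection layer} that reproduces the $1$-norm Steepest-Edge path analyzed in \cite{deloera2020pivot}, and a \emph{basis-management layer} that realizes each such edge move as a short sequence of pivots in a tableau, so that each non-degenerate pivot corresponds to exactly one edge on that path. For the first layer, I would import directly from \cite{deloera2020pivot} the fact that the $1$-norm Steepest-Edge edge rule reaches an optimum of any $0/1$-LP in a strongly polynomial (indeed linear in $d$ or $n$) number of edges; this is the ``Furthermore'' part of the theorem and the source of the step bound. The nontrivial content is the second layer: given the current extreme point $\bm x$ and a prescribed steepest edge direction $\bm y - \bm x$ toward an adjacent vertex $\bm y$, I must produce a basis $B$ at $\bm x$ for which the entering-variable choice dictated by the reduced costs is precisely a non-degenerate pivot along $\bm y - \bm x$. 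The key geometric input is that on a $0/1$ polytope the edge direction is a $\{0,\pm 1\}$ vector (up to scaling) with small, explicitly bounded support, and the vertices $\bm x$ and $\bm y$ differ in a controlled way; this is what lets us navigate the (possibly exponentially large) set of bases at $\bm x$ efficiently.

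Concretely, I would proceed as follows. First, fix any basis $B_0$ representing $\bm x$. Second, show that by a polynomial number of \emph{degenerate} pivots one can pass from $B_0$ to a basis $B$ at $\bm x$ such that the column associated with the desired edge is ``ready to enter'': i.e., one can choose a nonbasic index $j$ whose reduced cost has the correct sign and whose pivot step length is positive, so that pivoting on $j$ moves along $\bm y - \bm x$. The argument here is a local one about the normal cone / the feasible bases at $\bm x$: the edge direction lies in the cone of feasible directions at $\bm x$, hence is expressible as a nonnegative combination of the nonbasic directions for \emph{some} basis at $\bm x$, and on a $0/1$ polytope one can in fact arrange it to be a single nonbasic direction after reorganizing the degenerate basis. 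Third, perform that one non-degenerate pivot, arriving at a basis representing $\bm y$. Fourth, repeat. The accounting then splits cleanly: the number of non-degenerate pivots equals the number of edges on the \cite{deloera2020pivot} path, which is strongly polynomial; and the number of degenerate pivots inserted between consecutive edges, while not bounded here, does not count against the theorem's claim, which is only about non-degenerate pivots.

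The main obstacle I anticipate is the third sentence of the previous paragraph: rigorously guaranteeing that at the degenerate vertex $\bm x$ one can always reach, via degenerate pivots, a basis at which the \emph{specific} steepest edge direction $\bm y - \bm x$ appears as the unique pivot direction selected by the rule. This requires understanding, for a $0/1$ polytope, exactly which edge directions at $\bm x$ are ``visible'' from a given basis and showing the visible set can be shifted to contain any prescribed edge by degenerate pivoting; equivalently, that the graph on bases-at-$\bm x$ connected by degenerate pivots is rich enough to expose every incident edge. I would handle this by a careful case analysis on the support of $\bm y - \bm x$ and on which slack/structural variables are at their bounds at $\bm x$, using the integrality of $A,D$ and the $0/1$ structure to bound step lengths and reduced-cost signs explicitly. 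A secondary technical point is to make the tie-breaking in the Steepest-Edge rule deterministic enough that the tableau-level rule is well-defined and provably follows the intended edge whenever a non-degenerate pivot is available; I would fix a lexicographic tie-break and verify it is consistent with the edge rule of \cite{deloera2020pivot}.
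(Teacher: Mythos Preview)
Your proposal has a genuine gap, and it misses the paper's central idea.

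You structure the argument as: (1) take the steepest edge at $\bm x$ as ``prescribed'' by the edge rule of \cite{deloera2020pivot}, then (2) navigate via degenerate pivots to a basis at $\bm x$ that exposes this specific edge as a single nonbasic direction. But step (1) is not a pivot-rule operation: a pivot rule must be a function of the current \emph{basis}, and at a basis $B$ the tableau exposes only the $d$ extreme rays of the basic cone $\mathcal{C}(B)$, not the (possibly exponentially many) edge-directions of $\mathcal{C}(\bm x)$. You never say how the rule identifies the steepest edge from tableau data alone; and step (2), which you yourself flag as the main obstacle, is left as a hoped-for case analysis on supports. As written, this is an oracle-based procedure, not a Simplex pivot rule.

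The paper sidesteps both issues with one observation you are missing. Because $P$ is $0/1$, the feasible cone $\mathcal{C}(\bm x)$ lies in a single orthant, and on that orthant the $1$-norm is \emph{linear}: $\|\bm z\|_1 = \bm v^\T\bm z$ for $\bm v = \bm 1 - 2\bm x$. So ``steepest in $1$-norm'' becomes ``maximize the linear-fractional $\bm c^\T\bm z / \bm v^\T\bm z$,'' which is an LP over a cone. The True Steepest-Edge pivot rule is then defined \emph{locally} at any basis $B$: among the extreme rays of $\mathcal{C}(B)$, prefer any improving ray with $\bm v^\T\bm z\le 0$ (such a ray is necessarily degenerate, since edge-directions lie in the orthant), and otherwise pick the ray maximizing $\bm c^\T\bm z / \bm v^\T\bm z$. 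The key Lemma~\ref{lem:non-degenerateperformance} shows that whenever this choice is non-degenerate, it is an optimal solution to $\max\{\bm c^\T\bm z : \bm v^\T\bm z\le 1,\ \bm z\in\mathcal{C}(B)\}$; since $\mathcal{C}(\bm x)\subseteq\mathcal{C}(B)$, every edge-direction at $\bm x$ is feasible for this LP, hence the chosen direction is at least as steep as every edge at $\bm x$ and, being itself an edge-direction, is a steepest one. No enumeration of edges and no basis-navigation to ``expose'' a target edge is needed; the strongly polynomial bound on non-degenerate pivots then follows immediately from the path-length result of \cite{deloera2020pivot}.
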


In Section \ref{sec:slimshadow} we present two rules which are modifications of the \emph{Shadow-Vertex pivot rule} of K. H. Borgwardt \cite{borgwardtshadow}.  In what follows we drop the word ``Vertex" from the name for sake of brevity.
We show that for $0/1$ polytopes our rules generate  monotone paths of length 
at most $n$ and $d$, respectively.  
We call our first modification the \emph{Slim Shadow rule}. As with the True Steepest-Edge pivot rule, in Section \ref{sec:slim_simplex} we extend this rule to a pivot rule which follows the same path in the feasible region.  We call this implementation the \emph{Slim Shadow pivot rule}.

\begin{thm}
\label{thm:slim_shad_Simplex}
On any $0/1$-LP of the form (\ref{lp}), the Simplex method with the Slim Shadow pivot rule reaches an optimal solution by performing no more than $n$ non-degenerate pivots.
\end{thm}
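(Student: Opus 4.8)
\medskip
\noindent\textbf{Proof strategy.}
The plan is to reduce the statement to its purely geometric counterpart. By the analysis of the \emph{Slim Shadow rule} as an edge rule, on a $0/1$ polytope $P$ this rule produces a monotone sequence of vertices $\bm x^0,\bm x^1,\dots,\bm x^k$ with $\bm x^k$ optimal for $\bm c^\T\bm x$ and $k\le n$. It then suffices to show that the tableau implementation — the \emph{Slim Shadow pivot rule} — visits exactly this sequence of vertices, performing one non-degenerate pivot along each edge $[\bm x^i,\bm x^{i+1}]$ and only finitely many degenerate pivots in between; since the path has at most $n$ edges, this gives the claimed bound and, in particular, termination at an optimal basic feasible solution.

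First I would recall the parametric description of the shadow path underlying the rule. Let $\bm w$ be the auxiliary objective that the rule associates with the starting basic feasible solution, chosen so that the starting vertex $\bm x^0$ maximizes $\bm w^\T\bm x$ over $P$ and so that the $2$-dimensional shadow $\pi(P)=\{(\bm c^\T\bm x,\bm w^\T\bm x):\bm x\in P\}$ is a polygon each of whose vertices is the image of a vertex of $P$. Following Borgwardt~\cite{borgwardtshadow}, the shadow path is the sequence of maximizers of $\bm c_\mu:=\bm c+\mu\bm w$ as $\mu$ decreases from a large value (where the maximizer is $\bm x^0$) to $0$ (where it is a $\bm c$-optimal vertex): this maximizer is locally constant and changes only at finitely many breakpoints $\mu_1>\mu_2>\cdots$, crossing one edge of $P$ at each; on the upper-right chain of $\pi(P)$ both coordinates are strictly monotone between consecutive polygon vertices, so no vertex of $P$ is repeated and the number of traversed edges equals $k\le n$.

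Next I would describe the pivot rule as a parametric Simplex run and its handling of degeneracy. At a basis $B$ optimal for $\bm c_\mu$ on the current parameter interval, the next breakpoint is read off from the (affine-in-$\mu$) reduced costs; the entering column is the one whose reduced cost vanishes there, and the leaving variable is chosen by a ratio test with a fixed lexicographic tie-break. A non-degenerate such pivot moves along the edge of $P$ crossed by the shadow path at that breakpoint, i.e.\ from $\bm x^i$ to $\bm x^{i+1}$; a degenerate pivot keeps the current vertex but replaces $B$ by another of its bases. The run stops when $\mu=0$ is reached with a $\bm c$-optimal basis, necessarily $\bm x^k$. For the count: the lexicographic tie-break prevents cycling, so only finitely many consecutive degenerate pivots occur; and because the pair $(\bm c^\T\bm x,\bm w^\T\bm x)$ moves monotonically along the upper-right chain of $\pi(P)$, each non-degenerate pivot consumes a distinct one of the $k\le n$ edges of the shadow path. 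Hence at most $n$ non-degenerate pivots are performed and the run terminates at an optimum.

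The main obstacle — exactly as for the True Steepest-Edge pivot rule of Theorem~\ref{thm:polynomial_pivots1} — is proving that this basis-level procedure genuinely tracks the geometric shadow path despite degeneracy: whenever the current vertex is a degenerate $\bm x^i$, one must show that after finitely many degenerate pivots the rule reaches a basis of $\bm x^i$ at which the edge $[\bm x^i,\bm x^{i+1}]$ appears as an adjacent basis, so that the next non-degenerate pivot moves to $\bm x^{i+1}$ and to no other vertex. I expect this to follow from the strict monotonicity of $\bm c_\mu^\T\bm x$ (equivalently of $(\bm c^\T\bm x,\bm w^\T\bm x)$ along the chain) under non-degenerate pivots together with its weak monotonicity under the lexicographically selected degenerate pivots, which both pins the non-degenerate pivots to the $k$ shadow edges and forces progress at degenerate vertices. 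Making this precise — in particular defining the tie-break so that the pivot rule is well defined at every basis and compatible with the shadow path — is the technical heart of the argument.
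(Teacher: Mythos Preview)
Your strategy matches the paper's: cite the geometric bound $k\le n$ for the Slim Shadow edge rule (Theorem~\ref{thm:slimshadbound}), then argue that the tableau implementation traverses exactly that path, so that the non-degenerate pivots are in bijection with its edges. Where you differ is in how you attack the second step. You propose a parametric-Simplex framing: maintain optimality for $\bm c_\mu=\bm c+\mu\bm w$ as $\mu$ decreases, pivot at breakpoints, use a lexicographic tie-break. The paper instead works directly with slope maximization in the basic cone. Its Theorem~\ref{thm:shadow_edge_to_pivot} establishes the invariant ``for all $\bm z\in\mathcal C(B)$, $\bm c^\T\bm z>0\Rightarrow\bm v^\T\bm z>0$'' and shows it propagates from each basis to the next (because the new basic cone contains both $-\bm z'$ from the last pivot and an edge of the shadow path, pinning $\pi(\mathcal C(B))$ away from the bad orthant); under this invariant the slope-maximizing pivot direction over the \emph{basic} cone is optimal for an auxiliary LP whose value agrees with the edge the Shadow rule selects over the \emph{feasible} cone. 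Lemma~\ref{lem:get_to_B0} furnishes the initial basis satisfying the invariant via degenerate pivots, and anti-cycling is handled either lexicographically or by the strict-slope-decrease argument of Lemma~\ref{lem:decreasing_slope}.

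That cone-invariant argument is precisely the piece you flag as ``the technical heart'' and leave open. Your parametric route is viable in principle (the paper itself notes the Gass--Saaty equivalence), but to finish it you still owe the same thing in different language: a proof that at a degenerate $\bm x^i$ the lexicographic pivots eventually expose the edge $[\bm x^i,\bm x^{i+1}]$ and no other, so the non-degenerate pivot goes to the right neighbor. Without that, your sketch is a correct outline but not yet a proof; the paper's invariant is one clean way to close the gap. One minor slip: in the paper $\bm x^0$ is the unique \emph{minimizer} of $\bm v=\bm 1-2\bm x^0$, not a maximizer of the auxiliary objective; your sign convention on $\bm w$ is flipped relative to the paper's $\bm v$.
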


Note that this bound is in terms of $n$, the number of variables of the input  representation of~(\ref{lp}). 
However, from \cite{Nad89}, we know that the diameter of $0/1$ polytopes is at most the dimension $d$ of the polytope. 
Fortunately, by modifying the Shadow pivot rule further (see Section \ref{sec:ordershadow}), we obtain the \emph{Ordered Shadow rule}, which yields the tight bound of $d$ steps. We can in fact turn this rule into an actual pivot rule that we 
call the \emph{Ordered Shadow pivot rule}.

\begin{thm} \label{thm:polynomial_pivots2} On any $0/1$-LP of the form~(\ref{lp}) whose feasible region has dimension $d$, the Ordered Shadow pivot rule reaches an optimal solution by performing no more than $d$ non-degenerate pivots.
\end{thm}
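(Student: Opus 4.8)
The plan is to normalize so that the optimal vertex the rule reaches is the origin, and then to induct on the dimension $d$ in the spirit of Naddef's proof~\cite{Nad89} that $0/1$ polytopes have diameter at most $d$, but carried out along a monotone Simplex path. From the definition of the Ordered Shadow pivot rule (Section~\ref{sec:ordershadow}) and its tableau implementation (Section~\ref{sec:slim_simplex}), the rule, started at an extreme point, produces a monotone sequence $\bm v_0, \bm v_1, \dots, \bm v_k$ of extreme points with $\bm v_k$ optimal, consecutive ones joined by an edge of $P$, and only degenerate pivots interposed; so it suffices to prove $k \le d$ from an arbitrary start on an arbitrary $0/1$ polytope of dimension $d$. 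Fixing such a run, set $\bm v^\star = \bm v_k$, $T = \supp(\bm v^\star)$, and apply the coordinate involution $x_j \mapsto 1-x_j$ for $j \in T$: this permutes $\{0,1\}^n$, hence carries $P$ to a $0/1$ polytope $P'$ of dimension $d$, carries $\bm v^\star$ to $\bm 0$, and replaces $\bm c$ by some $\bm c'$ (up to an additive constant) for which $\bm 0$ is optimal over $P'$. Since the rule is equivariant under such relabelings of the cube (its auxiliary ordering is on coordinate indices, which the involution does not touch, and its remaining operations are affine-covariant), the transformed run is again an Ordered Shadow run, so we may assume $\bm v^\star = \bm 0$.

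The induction rests on a descent claim: the first non-degenerate step $\bm v_0 \to \bm v_1$ activates a facet-defining inequality of $P'$ that remains tight for the entire remainder of the path, and is therefore also tight at $\bm v^\star = \bm 0$. Letting $F$ be the corresponding face of $P'$, we then have $\bm 0, \bm v_1, \dots, \bm v_k \in F$, and the tail $\bm v_1, \dots, \bm v_k$ is exactly the Ordered Shadow run on the sub-LP $\max\{\bm c'^\T \bm x : \bm x \in F\}$ started at $\bm v_1$. Since a face of a $0/1$ polytope is again a $0/1$ polytope with $\dim F \le d-1$, the inductive hypothesis bounds the tail by $d-1$ non-degenerate pivots, and together with the step $\bm v_0 \to \bm v_1$ this gives $k \le d$; the base case $d=0$ is a single point requiring no pivots. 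To establish the descent claim I would track the one-parameter family of objectives that defines the rule's path and use that the ``ordered'' choice of auxiliary direction is arranged so that the set of tight facets only grows along the path with respect to a fixed total order — hence no facet is ever deactivated — while restricting that family to the newly activated facet reproduces verbatim the parametric family the rule uses on the sub-LP, which yields the self-similarity half of the claim.

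The main obstacle is precisely this descent claim: the monotonicity of the active-facet set under the ordered perturbation. This is where the $0/1$ hypothesis and the specific ordering are essential, and it is what separates the Ordered Shadow rule from an arbitrary Shadow rule, whose path on a $0/1$ polytope can be exponentially long (and for which only the weaker bound $n$ of Theorem~\ref{thm:slim_shad_Simplex} is available). The complementary bookkeeping — that the interposed pivots are genuinely degenerate, and that ties are resolved consistently with the normalization $\bm v^\star = \bm 0$ — is routine and is handled by the implementation of Section~\ref{sec:slim_simplex}.
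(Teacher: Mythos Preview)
Your inductive approach via a descent claim is genuinely different from the paper's direct argument, but the two claims it rests on are not established, and the justification you sketch for the descent claim is incorrect. You assert that under the ordered auxiliary vector ``the set of tight facets only grows along the path \dots\ hence no facet is ever deactivated.'' This is false already for the $3$-simplex $\conv\{\bm 0,e_1,e_2,e_3\}$ with $\bm c=(1,2,3)$ and start $\bm x^0=\bm 0$: the Ordered Shadow path is $\bm 0\to e_1\to e_2\to e_3$, and after your normalization (flip $x_3$) it reads $(0,0,1)\to(1,0,1)\to(0,1,1)\to(0,0,0)$; the facet $x_1=0$ of the flipped simplex is tight at the start and is deactivated at the very first step. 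It happens that the newly activated facet $x_3=x_1+x_2$ then stays tight for the remainder, so your descent \emph{statement} survives here --- but it is not a coordinate facet, and nothing in your argument singles it out. The self-similarity half has the same problem: the Ordered Shadow rule on the facet $F$ started at $\bm v_1$ uses auxiliary vector $\bm v'(k)=(-1)^{\bm v_1(k)}(c^*)^k$, not the original $\bm v(k)=(-1)^{\bm v_0(k)}(c^*)^k$, and since $\bm v_0,\bm v_1$ may differ in many coordinates, ``restricting the parametric family to $F$'' does \emph{not} reproduce the family the rule uses on the sub-LP.

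The paper avoids both issues by normalizing the \emph{starting} vertex (the one the rule is actually built around) to $\bm 0$ and arguing directly, without induction. It shows the path visits the $\bm c$-maxima of a strictly increasing chain of faces $F_{\alpha(0)}\subsetneq F_{\alpha(1)}\subsetneq\cdots\subsetneq F_{\alpha(k)}=P$, where $F_j=\{\bm x\in P:\bm x(a)=\bm x^0(a)\text{ for }a>j\}$; since dimensions strictly increase, $k\le d$. The substance lies in proving the Ordered Shadow rule traces exactly this chain: Lemma~\ref{lem:altchar} gives a combinatorial reformulation (pick the $\bm c$-improving neighbor minimizing $f(\bm u)=\max\{k:\bm u(k)\ne\bm x^0(k)\}$, then maximize $\bm c$), and Theorem~\ref{thm:ordered} verifies via order-of-magnitude estimates on the slopes that the shadow rule with $\bm v(k)=(-1)^{\bm x^0(k)}(c^*)^k$ makes the same choice. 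Any rescue of your self-similarity claim would pass through this same combinatorial description.
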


Note that our algorithmic results yield tight monotone diameter upper bounds for all $0/1$ polytopes.
A key philosophical difference between our results and many previous results in the literature is that we embrace the 
geometry of $0/1$ polytopes as they are. We do not rely on randomization or perturbation of the input polyhedron nor 
assume a special constraint format for the $0/1$-LPs we consider.

\subsection*{Prior work and context}

The question of finding an efficient pivot rule \textendash\, one which is guaranteed to perform only a polynomial number of pivots \textendash\, is a fundamental difficulty in studying the Simplex method. There are many different pivot rules for the Simplex method. The list of pivot rules is too large to cover here in detail; we refer the reader to \cite{terlaky+zhang} for a general taxonomy. 
In this paper we present variations of two families of pivot rules.

The first is the well-known \emph{Steepest-Edge pivot rule}, which has become the most popular choice in practice. That pivot rule takes inspiration from steepest descent in smooth optimization, and the choice of the basis exchange is determined by normalizing the length of the pivot direction to find a reasonable choice of steepest direction. See \cite{ForrestGoldfarb92} for further details. We emphasize again that this pivot rule traditionally normalizes using the $2$-norm, whereas here we use the $1$-norm.

The second is the \emph{Shadow pivot rule} \cite{borgwardt}.
This famous pivot rule is often presented in terms of two-dimensional projections (shadows) of polyhedra, but it is known to be equivalent to the older Gass-Saaty parametric pivot rule (see \cite{GassSaaty1955,Jonesetal2008}). The relationship between the projection of polyhedra and parametric linear programming problems plays a key role in our work. 
K. Borgwardt \cite{borgwardt} proved that the Simplex method using the Shadow pivot rule runs in polynomial time on average. The Shadow pivot rule continues to play a significant role in the probabilistic analysis of the Simplex method due to the advent of the \emph{smoothed analysis} of the Simplex method \cite{dadush+huiberts,tengspielman,vershynin}. 

In 1972, Klee \& Minty \cite{kleeminty} showed for the first time that pivot rules may exhibit exponential behavior. 
They constructed an explicit set of  examples such that Dantzig's original pivot rule requires exponentially many steps. 
The algorithm could be tricked into 
visiting all $2^d$ vertices of a deformed cube to find a path between two nodes which are only one step apart in the $1$-skeleton of the cube. Later, Zadeh found that 
bad exponential behavior may appear even in nice families such as network flow 
problems \cite{zadeh2}.  Today, all popular pivot rules for the Simplex method, including Shadow and Steepest-Edge,
are known to require an exponential number of steps to solve some concrete ``twisted'' linear programs 
(see \cite{amentaziegler,AvisFriedmann2017,Disser-Hopp19,Friedmannetal,GOLDFARB+sit1979,Goldfarb94,Hansen+Zwick2015,Murty1980,terlaky+zhang,zadeh2,zieglerextremelps}  and references therein). 
We also know that pivot rules as decision problems are hard in the sense of complexity theory \cite{adler+papadimitriou+rubinstein,disser+skutella, fearnley+savani}.

Today, only a few highly structured families of LPs have reasonable bounds of efficiency for the Simplex method. 
A notable family of LPs is the family of network flow linear  programs which have been shown to be solvable in polynomial time by Orlin \cite{Orlin}. Dadush \& H\"ahnle \cite{DH16} \textendash\, inspired by prior work of Brunsch \& R\"oglin \cite{brunsch+Roglin} and Eisenbrand \& Vempala \cite{Eisenbrand-Vempala17} \textendash\, studied the Simplex method with the Shadow-vertex pivot rule over \emph{low-curvature} polyhedra.  Intuitively, a polyhedron is low-curvature when the hyperplanes at 
the boundary meet vertices at sharp angles, i.e., their tangent cones are slim. 
Dadush and H\"anhle obtained a diameter bound of $O(\frac{d^2}\delta\ln\frac d\delta)$ for $d$-dimensional polyhedra with 
curvature parameter $\delta\in(0,1]$. They showed that, starting from some initial vertex,  an optimal vertex can be found using an expected  $O(\frac{d^3}\delta\ln\frac d\delta)$ Simplex pivots, each requiring $O(md)$ time to compute, where $m$ 
is the number of constraints. An initial feasible solution can be found using $O(\frac{md^3}\delta\ln\frac d\delta)$ pivot steps. Their analysis of the Shadow pivot rule differs in that they study the behavior of the pivot rule in relation to the geometry of the normal cones. Borgwardt referred to this perspective as the dual Shadow vertex-pivot rule in \cite{borgwardtshadow} and used the dual perspective to prove his bounds as well. In contrast, our analysis relies purely on the primal view of the Shadow pivot rule.

 The class of $0/1$ polytopes has some of the most interesting and challenging linear programs because binary models are fundamental for combinatorial optimization, and  many of the classical polytopes in this family, such as stable set polytopes, traveling salesperson polytope, etc., 
 come from NP-hard problems. 
 The behavior of the Simplex method over $0/1$-LPs is difficult to analyze \emph{in vivo} because they are highly degenerate  and most researchers end up relying on some kind of perturbation of the polytope to derive any conclusions.  A classical result of D. Naddef~\cite{Nad89}  states that the (undirected) diameter of the graph of a $d$-dimensional $0/1$ polytope is at most $d$.  Since the Simplex  method traces monotone directed paths on this graph,  the dimension $d$ is a general tight lower bound on the number of non-degenerate pivots required for  the Simplex method, but this bound on the diameter of the graph is not necessarily achieved for concrete pivot rules. 
 A few authors have studied the length of monotone paths  of $0/1$ polytopes \cite{matsui93,rispoli1992,rispoli1998,Blanchardetal2021}. We note that $0/1$ polytopes do not have bounded curvature or small subdeterminants because the coefficients of defining inequalities can be extremely large \cite{01ziegler}.

 Consider next the family of linear programs in standard  
 equality form  $\max \{\bm c^\T \bm x : A\bm x= \bm b, \bm x\geq \bm 0\}$ where $A$ is a real $n \times m$ matrix.  
 Inspired by the work of Y. Ye \cite{yembdp}, Kitahara \& Mizuno \cite{kitaharamizuno1,kitaharamizuno2}, showed 
 that the  number of different basic feasible solutions (BFSs) generated by  this version of the Simplex method 
 using Dantzig's pivot rule is bounded by $m \lceil n \frac{\gamma}{\tau} \log (n\frac{\gamma}{\tau}) \rceil$, where $\tau$ and $\gamma$ are the 
minimum and the maximum values of all the positive elements of primal BFSs and $\lceil a \rceil$ denotes the smallest integer greater  than $a$.  
Their  results showed that the number of non-degenerate pivots performed by the Simplex method with Dantzig's rule is strongly polynomial for a 
\emph{subclass} of $0/1$-LPs: namely, those that can be expressed in standard equality form with all extreme points having variables in $\set{0,1}$ 
(e.g., the Birkhoff polytope). We stress that while every $0/1$ polyhedral region can be written in standard form by adding slack variables, at an 
extreme point such variables could potentially take values not inside $\set{0,1}$. Thus, they may not satisfy the hypothesis of \cite{kitaharamizuno1,kitaharamizuno2}. As such, the strongly polynomial bound of \cite{kitaharamizuno1,kitaharamizuno2} does not extend to all $0/1$-LPs.

 \subsection{Preliminaries and Notation}\label{sec:prelim}

We consider LPs over $0/1$ polytopes in the most general description~(\ref{lp}), i.e., $P = \pol$. The only simplifying assumptions we will make on $P$ 
are that (i) $A$ has full rank, which is an assumption that can be made without loss of generality because redundant equalities are easy to identify, 
and (ii) the set of inequalities contains $\bm x \geq \bm 0$, which again can be made without loss of generality as 
the polytope is $0/1$.  Given an integer $k$, we will use the notation $[k]$ to denote the set $\set{1,\ldots,k}$. Given a vector $\bm z$, we will use 
the notation $\bm z(i)$ to denote the $i^\text{th}$ component of $\bm z$ and given a subset $X$ of $[n]$ we will use the notation $\bm z(X)$ to denote 
the restriction of $\bm z$ to the components indexed by $X$.  
Given a matrix $Z$ with $n$ columns and a subset $X$ of $[n]$, we will use the notation $Z_X$ to denote the restriction of $Z$ to the columns indexed by $X$.

We let $d$ refer to the dimension of the feasible region $P$ in contrast to $n$, the number of variables in description (\ref{lp}).  
Given an extreme point $\bm x'$ of $P$, we define the \emph{feasible cone} at $\bm x'$ to be the set of all directions $\bm z$ such that 
${\bm x'}+\varepsilon\bm z\in P$  for some $\varepsilon >0$. That is, it is the set 
$\mathcal{C}(\bm x') = \set{\bm z \in\R^n: \,A\bm z=\bm 0,\, \overline{D}\bm z\leq \bm 0}$ 
where $\overline{D}$ denotes row submatrix of $D$ given by the indices of the inequalities of $D\bm x\leq \bm d$ 
that are tight at $\bm x'$. 
The extreme rays of the feasible cone at $\bm x'$ correspond to the \emph{edge-directions} at $\bm x'$. 

Let $\bm z^1,\ldots,\bm z^t$ be generators of the extreme rays of the feasible cone at $\bm x'$.  
Then for each $i\in[t]$, there exists a unique extreme point $\bm x^{i}$ of $P$ which is adjacent to $\bm x'$ in $P$ such that 
$\bm z^{i} = \alpha(\bm x^{i} - \bm x')$ for some positive scalar $\alpha$.  In this circumstance, we say that $(\bm x^{i} - \bm x)$ is an \emph{edge-direction} at $\bm x'$.  
We therefore say that any generator $\bm z^{i}$ of an extreme ray of $\mathcal{C}(\bm x')$ \emph{corresponds to an edge-direction at $\bm x'$ in $P$}.

An edge-direction $\bm g$ at $\bm x$ in $P$ is called a steepest edge-direction if it maximizes $\frac{\bm c^\T \bm g}{||\bm g||_1}$ among all edge-directions at $\bm x$ in $P$. We stress once more that in this paper we always use the $1$-norm to define the steepest edges, rather than the more traditional $2$-norm used in \cite{ForrestGoldfarb92} and elsewhere. 

The extreme points of $P$ and the one-dimensional faces form the \emph{graph of the polytope} $P$, also called the \emph{$1$-skeleton} of $P$. The non-degenerate steps taken by the Simplex method correspond to a path on that graph. Furthermore, we let $N_{\bm{c}}(\bm{x})$ denote the set of $\bm{c}$-improving neighbors of $\bm{x}$ for a vertex $\bm{x}$ of $P$ and objective function $\bm{c}$.

\subsubsection{An algebraic review of the Simplex method}\label{sec:algebraic_simplex}
For general background on the Simplex method and its implementation, see \cite{bertsimastsitsiklis,vanderbei,Schrijver1986}.  We will give a brief review 
of it here in the typical language of tableaux manipulation, which is how the Simplex method is implemented in practice. We have to start by putting our LP into standard equality form. Note that for the $0/1$-LPs considered in this paper, it suffices to add slack variables, as all original variables already satisfy non-negativity by assumption. Given an initial LP of the form (\ref{lp}), 

we therefore add slack variables and get an LP of the form
$$\max \set{ \bm c'^\T \bm x :\, A'\bm x = \bm b',\, \bm x \ge \bm 0,\, \bm x\in\R^{n'}\,},$$
where $A' \in \Z^{m' \times n'}$, $\bm c' \in \Z^{n'}, \bm b' \in \Z^{m'}$.
We remark here that, unlike in the work of~\cite{kitaharamizuno1}, we do not require that the added slack variables always take on $0/1$ values at vertices.  Though we put the LP into equality form for the purposes of executing the Simplex method, the performance guarantee of the pivot rule only depends on the fact that the original LP was $0/1$.
  
Since we assumed earlier that the equality matrix $A$ of our original LP has full row-rank, we have that $A'$ has full row-rank $m'$. A \emph{basis} $B$ is a subset of $[n']$ (the column indices of the matrix $A'$) of size $m'$ such that the columns of $A'$ indexed by $B$ are linearly independent.  Given a basis $B$, assume that it is an ordered set and let $B(j)$ be the $j$-th element of the set. The variables with indices in $B$ are also referred to as \textit{basic variables}, and the variables with indices in $N := \{1,\dots,n'\} \setminus B$ are referred to as the \textit{nonbasic
variables}. A basis $B$ is uniquely associated with a \emph{basic solution} $\bm x'$ defined as follows:
$\bm x'(B) = A'^{-1}_{B} \bm b'$ and $\bm x'(N) = \bm 0$. If $\bm x' \geq \bm 0$, then it is called a \emph{basic feasible solution}, 
and the corresponding basis is called a \emph{feasible basis}.

The Simplex method can be described compactly as follows;
see, e.g., \cite{bertsimastsitsiklis,vanderbei,Schrijver1986} for a more detailed treatment.
\begin{itemize}
\item Start with any feasible basis $B$ be and repeat the following steps:
  \begin{enumerate}
  \item Compute the reduced costs for the nonbasic variables
    $\bar{\bm c}'(N)^\T = \bm c'(N)^\T - \bm c'(B)^\T A'^{-1}_B A'_N$.  If $\bar{\bm c}'(N) \le \bm 0$ the basis is
    optimal: the algorithm terminates. Otherwise, choose $j :
    \bar{\bm c}'(j) > 0$. The particular choice of $j$ depends on the pivot rule used. The variable associated with column $j$ is the {\em entering} variable.
  \item Compute $\bm u = A'^{-1}_B A'_j$. If $\bm u \le \bm 0$, the optimal cost is unbounded: the algorithm terminates.
  \item If some component of $\bm u$ is positive, compute the {\em ratio test}:
    \begin{equation}
      \label{eq:ratio_test}
      r^* \coloneqq \min_{i\in[m'] \,:\, \bm u(i) > 0} \left(\frac{\bm x'(B(i))}{\bm u(i)}\right).
    \end{equation}
  \item Let $\ell$ be such that $r^* = \frac{\bm x'(B(\ell))}{\bm u(\ell)}$.
      If there is a tie, the particular choice of $\ell$ depends on the pivot rule used. The variable associate with $\ell$ is the {\em leaving} variable. Form a new basis replacing
    $B(\ell)$ with $j$. This step is called a {\em pivot}. Go back to 1.
  \end{enumerate}
\end{itemize}

At a high level, the Simplex method moves between feasible bases, and requires a pivot rule, which given a basis, decides which \emph{adjacent basis} to move to (in particular, how to choose the entering variable and the leaving variable at each step).

Each of the Simplex method's basis exchanges corresponds geometrically to a \emph{pivot direction} in which to move from the current basic feasible solution $\bm x$. Formally, note that at Step 1 of the algorithm, each index $j \in N$ yields a pivot direction $\bm z^j$ defined as $\bm z'^j(B) = A'^{-1}_BA'_j$, $\bm z'^j(j) = 1$, and $\bm z'^j(i) = 0$ for all $i\in N\setminus\set{j}$. 
The Simplex method selects a pivot direction which is improving with respect to the objective function.

If moving in a pivot direction $\bm g$ from $\bm x'$ \textit{maintains} feasibility, we call this a \emph{non-degenerate pivot}. A non-degenerate pivot implies that $\bm x' + \varepsilon \bm g$ is feasible for some $\varepsilon >0$.  In this case, our new basis yields a new basic feasible solution $\bm x''$, where $\bm x'$ and $\bm x''$ are \emph{adjacent} in the feasible region \textendash\, that is, contained in a common edge.  If instead, moving in this direction from $\bm x'$ would \textit{violate} feasibility, we call this a degenerate pivot. Degenerate pivots imply that $\bm x' + \varepsilon \bm g$ is not feasible for any $\varepsilon >0$. In this case, though the basis changes, the new basic feasible solution still corresponds to the extreme point $\bm x'$.

\section{The True Steepest-Edge Pivot Rule} \label{sec:steep-enough}

We first justify a geometric framework through which we may define a pivot rule.
Given a feasible basis $B$ of our LP in standard equality form, let $\bm x'$ be the basic feasible solution associated to $B$, and $\bm x''$ 
be the corresponding extreme point solution of $P$ (i.e., the vector obtained from $\bm x'$ by removing the slack variables). With a slight 
abuse of terminology, we say that $\bm x''$ is the vertex of $P$ associated to $B$.  

The basic feasible solution $\bm x'$ is identified by $m' + |N|$ linearly independent constraints that are tight at $\bm x'$: namely, the constraints  
$A'\bm x = \bm b'$ plus the non-negativity constraints for the nonbasic variables.
Let $\bar N \subseteq N$ be the set of indices for the nonbasic variables that are original variables for $P$ (i.e., non slack variables). 
We can naturally associate $n$ tight constraints that identify $\bm x''$ as follows: we consider (i) the equalities $A \bm x = \bm b$, plus 
(ii) the non-negativity constraints $\bm x^j \geq 0$ for $j\in \bar N$, plus (iii) the subset of inequalities associated to each nonbasic slack variable.  
It is easy to see that these constraints are linearly independent. Let $D^B$ be the submatrix of $D$ induced by the rows of the tight inequality constraints 
(ii) and (iii). We define the \emph{basic cone} associated to $B$ as 
$$\mathcal{C}(B) = \set{ \bm z \in \R^n : \, A\bm z = \bm 0,\, D^B \bm z \leq \bm 0}.$$

One observes that given a feasible basis $B$ and its corresponding basic feasible solution $\bm x'$, the available pivot directions at $\bm x'$ project to the extreme rays of $\mathcal{C}(B)$.

Clearly, the basic cone associated to $B$ contains the feasible cone at $\bm x''$, i.e.,
$\mathcal{C}(B) \supseteq \mathcal{C}(\bm x'')$, as the system defining $\mathcal C(B)$ is a relaxation of the system defining $\mathcal C({\bm x''})$.  
A generator $\bm z'$ of an extreme ray of $\mathcal{C}(B)$ is the projection of the pivot direction given by a non-degenerate basis exchange if and only if $\bm z'$ also generates an extreme ray of $\mathcal{C}(\bm x'')$. That is, $\bm z'$ is the projection of a non-degenerate pivot direction at $\bm x'$ given by the basis $B$ if and only if $\bm z'$ also corresponds to an edge-direction at $\bm x''$ in $P$. Given this, we can partially define a pivot rule in terms of the \textit{original} LP (i.e., the LP before it was put into standard equality form) by explaining which extreme ray of $\mathcal{C}(B)$ to choose as a pivot direction.  In particular, this corresponds to the choice of the variable \textit{entering} the basis.

\begin{defn}\label{defn:true_steepest_edge}
Given a $0/1$-LP $(P)$ of the form~(\ref{lp}) with feasible region $P$, let $(P')$ be the LP obtained by putting $(P)$ into standard equality form, let $B$ be the current basis of $(P')$, and let $\bm{x}$ be the vertex of $P$ associated to that basis. Let $\bm v = \bm 1 - 2\bm x$. The True Steepest-Edge pivot rule selects a pivot direction as follows:
\begin{itemize}
\item If any generator $\bm z^j$ of an extreme ray of $\mathcal{C}(B)$ satisfies $\bm v^\T\bm z^j \leq 0$ and $\bm c^\T\bm z^j >0$, then it selects $\bm z^j$;
\item Otherwise, it selects the generator $\bm z^j$ that maximizes $\frac{\bm c^\T \bm z^j}{\bm v^\T \bm z^j}$.
\end{itemize}

\end{defn}

We now show that if the True Steepest-Edge pivot rule performs  a non-degenerate pivot, $\bm z^j$ is the \textit{steepest edge-direction} at $\bm x$ in $P$. 
A key observation for our result came from~\cite{deloera2020pivot}: if $\bm x$ is an extreme point of a $0/1$-LP, each entry of $\bm x$ is either equal to its upper bound or its lower bound, implying that the entire feasible cone $\mathcal C(\bm x)$ is contained within a single orthant of $\R^n$.

\begin{lem}\label{lem:non-degenerateperformance}
Let $\bm x$ be an extreme point solution of a $0/1$-LP of the form~(\ref{lp}) and let $\bm x' = \bm x + \alpha \bm z^j$ be the extreme point solution
obtained from $\bm x$ after moving maximally along the  direction $\bm z^j$ selected according to the True Steepest-Edge pivot rule. If $\bm x'\neq \bm x$, then $\bm z^j$ corresponds to the steepest edge-direction at $\bm x$ in $P$.
\end{lem}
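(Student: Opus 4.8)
The plan is to exploit the single-orthant observation: since $\bm x$ is a $0/1$ vertex, every coordinate of $\bm x$ is at a bound, so $\mathcal C(\bm x)$ — and hence every edge-direction $\bm g$ at $\bm x$ — lies in the orthant determined by the sign vector $\bm v = \bm 1 - 2\bm x$ (entry $+1$ where $\bm x$ is $0$, entry $-1$ where $\bm x$ is $1$). Consequently, for any feasible direction $\bm z$ at $\bm x$ we have $v(i)\,z(i) = |z(i)|$ coordinatewise, so $\bm v^\T \bm z = \|\bm z\|_1$. Thus maximizing $\frac{\bm c^\T \bm z}{\bm v^\T \bm z}$ over the relevant directions is exactly maximizing $\frac{\bm c^\T \bm z}{\|\bm z\|_1}$, i.e. the $1$-norm steepness from~\cite{deloera2020pivot}; and the first bullet of Definition~\ref{defn:true_steepest_edge} detects precisely the case where some improving direction has $\|\bm z\|_1 \le 0$, which forces $\bm z = \bm 0$, so that bullet is actually vacuous for genuine edge-directions (or, more carefully, cannot be triggered by a direction that yields a non-degenerate pivot). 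This reduction is the conceptual core; I expect it to be short once the orthant fact is invoked.

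The remaining work is to bridge the gap between the \emph{basic} cone $\mathcal C(B)$, over whose extreme rays the pivot rule optimizes, and the \emph{feasible} cone $\mathcal C(\bm x)$, whose extreme rays are the actual edge-directions at $\bm x$ in $P$. First I would record the containment $\mathcal C(B) \supseteq \mathcal C(\bm x)$ already noted in the text, together with the key fact (also stated in the excerpt) that a generator $\bm z^j$ of an extreme ray of $\mathcal C(B)$ corresponds to a genuine edge-direction at $\bm x$ in $P$ if and only if moving along it is a non-degenerate pivot. So assume the hypothesis $\bm x' \ne \bm x$: then the selected $\bm z^j$ is in fact an extreme ray of $\mathcal C(\bm x)$, i.e. an honest edge-direction. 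The point to argue is that it is not merely steepest among the rays of $\mathcal C(B)$ but steepest among \emph{all} edge-directions at $\bm x$. Here I would use that $\mathcal C(B)$ is a simplicial cone containing $\mathcal C(\bm x)$ with the same lineality (namely $\{\bm z : A\bm z = \bm 0,\ D^B\bm z \le \bm 0\} \supseteq \mathcal C(\bm x)$), so every extreme ray of $\mathcal C(\bm x)$ is a nonnegative combination of extreme rays of $\mathcal C(B)$; combined with the linearity of $\bm c^\T(\cdot)$ and $\bm v^\T(\cdot) = \|\cdot\|_1$ on the orthant, a standard mediant/averaging inequality gives that the maximum of $\frac{\bm c^\T \bm z}{\|\bm z\|_1}$ over $\mathcal C(\bm x)$ is attained at one of its generators and is $\le$ the same maximum over the generators of $\mathcal C(B)$. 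Since the selected $\bm z^j$ is itself a generator of $\mathcal C(\bm x)$ and it achieves the maximum over $\mathcal C(B)$'s generators, it must achieve the maximum over $\mathcal C(\bm x)$ as well.

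I would organize the write-up as: (1) establish $\bm v^\T \bm z = \|\bm z\|_1$ for all $\bm z \in \mathcal C(B)$ with $\bm v^\T \bm z \ge 0$, and in particular for all $\bm z \in \mathcal C(\bm x)$, using the $0/1$ property of $\bm x$ — actually one needs this for all directions in $\mathcal C(B)$ that could be selected, which requires checking that the relevant slack/original-variable sign pattern still pins $\bm z(i)$ to have sign $v(i)$ for the original coordinates; the nonbasic slacks project away and do not affect $\|\bm z\|_1$ as computed on the $n$ original coordinates; (2) deduce the first bullet cannot fire when the pivot is non-degenerate (an improving edge-direction has strictly positive $1$-norm), so $\bm z^j$ is chosen by the mediant-maximization in the second bullet; (3) run the mediant/averaging argument to lift "steepest among generators of $\mathcal C(B)$" to "steepest among all edge-directions at $\bm x$"; conclude. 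The main obstacle I anticipate is step (1)'s bookkeeping: being careful that $\bm v^\T \bm z$, computed in $\R^n$ on the \emph{original} variables only, really equals $\|\bm z\|_1$ of the projected direction for every candidate ray of $\mathcal C(B)$ — not just for rays of $\mathcal C(\bm x)$ — since otherwise the comparison the pivot rule performs is not literally a comparison of $1$-norm steepnesses. The cleanest route is probably to note that for any $\bm z \in \mathcal C(B)$, the original coordinates satisfy the tight sign constraints from $D^B$ (which include $\bm x \ge \bm 0$ restricted appropriately, plus the upper-bound-type inequalities that are tight because $\bm x$ is $0/1$), forcing $z(i) \ge 0$ when $x(i) = 0$ and $z(i) \le 0$ when $x(i) = 1$, hence $v(i) z(i) = |z(i)|$ throughout; I would double-check that the assumption "the set of inequalities contains $\bm x \ge \bm 0$" from Section~\ref{sec:prelim} is exactly what makes this work uniformly.
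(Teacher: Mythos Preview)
Your plan shares the paper's two key ingredients: the orthant identity $\bm v^\T\bm z=\|\bm z\|_1$ for all $\bm z$ in the orthant $O_x\supseteq\mathcal C(\bm x)$, and the observation that a direction with $\bm v^\T\bm z\le 0$ cannot be a genuine edge-direction (so if the first bullet fires, the pivot is degenerate). The gap is in your bridging step from $\mathcal C(B)$ to $\mathcal C(\bm x)$.

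Your proposed resolution of the ``main obstacle'' asserts that every $\bm z\in\mathcal C(B)$ lies in $O_x$, on the grounds that $D^B$ contains enough bound constraints to force $z(i)\ge 0$ when $x(i)=0$ and $z(i)\le 0$ when $x(i)=1$. This is false: $D^B$ contains only the tight inequalities associated with the \emph{nonbasic} variables of the chosen basis, which at a degenerate vertex is a strict subset of all tight inequalities; the omitted tight bound constraints are exactly what would pin the remaining signs. The paper's own example immediately following the lemma exhibits this: at $\bm x=(0,0,1)$ in the pyramid, the basis whose $D^B$ omits the tight constraint $x_2\ge 0$ yields a basic cone $\mathcal C(B)$ with extreme ray $(0,-1,0)$, which lies outside $O_x$ and has $\bm v^\T(0,-1,0)=-1\neq 1=\|(0,-1,0)\|_1$.

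Once $\mathcal C(B)\not\subseteq O_x$, the mediant argument in your step (3) does not go through. Writing an edge-direction $\bm g=\sum_i\lambda_i\bm z^i$ with $\lambda_i\ge 0$ and setting $M=\frac{\bm c^\T\bm z^j}{\bm v^\T\bm z^j}$, a generator $\bm z^i$ with $\bm v^\T\bm z^i\le 0$ and $\bm c^\T\bm z^i\le 0$ (which is all that ``first bullet did not fire'' rules out) can still satisfy $\bm c^\T\bm z^i-M\,\bm v^\T\bm z^i>0$, so the inequality $\bm c^\T\bm g\le M\,\bm v^\T\bm g$ does not follow termwise. The paper handles this not by a mediant but by recasting the comparison as an auxiliary LP $\mathcal Q$: maximize $\bm c^\T\bm z$ over $\{\bm z\in\mathcal C(B):\bm v^\T\bm z\le 1\}$. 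One argues that $\mathcal Q$ is bounded (this is where the first-bullet condition is used), that its optimal vertex is the selected $\bm z^j$, and that every edge-direction normalized to $\|\cdot\|_1=1$ is feasible for it; the comparison then follows from LP optimality. Crucially, the identity $\bm v^\T=\|\cdot\|_1$ is invoked only for vectors in $\mathcal C(\bm x)$, where it genuinely holds, and never for arbitrary rays of $\mathcal C(B)$.
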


\begin{proof}
Consider the vector $\bm v = \bm 1 - 2\bm x$.  That is, $\bm v$ is defined by
\begin{equation} \label{crossp-facet}
    \bm v(i) \coloneqq 
        \begin{cases}
            \,1 &\text{ if }\bm x(i) = 0,\\
            -1 &\text{ if }\bm x(i) = 1.
        \end{cases}
\end{equation}
We have that $\bm v$ is contained in the same orthant $O_x$ of $\R^n$ as the cone $\mathcal C(\bm x)$, 
where $O_x$ is equal to the set of all $\bm z\in\R^n$ satisfying the inequalities
\begin{align*}
    \bm z(i)\geq 0 &\text{ for all } i \text{ such that }\bm x(i) = 0,\\
    \bm z(i)\leq 0  &\text{ for all } i \text{ such that }\bm x(i) = 1.
\end{align*}
Then by the definition of the 1-norm, for all  vectors $\bm z$ in the orthant $O_x$, $||\bm z||_1$ is precisely equal to $\bm v^\T\bm z$.  In particular, this holds for all vectors in $\mathcal C({\bm x})$, and so all rays in $\mathcal C(\bm x)$ intersect the hyperplane $H$ defined by $\bm v^\T \bm z = 1$, i.e., $H=\{\bm z : \bm v^\T \bm z = 1\}.$

Let the $d$ extreme rays of the basic cone $\mathcal C(B)$ be generated by $\bm z^1,\ldots,\bm z^d$.  If the extreme ray generated by $\bm z^{i}$ intersects $H$, then assume without loss of generality (by possibly rescaling) that $\bm z^{i}$ is in $H$.
Note then that for any generator $\bm z^{i}$ of an extreme ray of $\mathcal{C}(B)$, if $\bm z^{i}$ happens to also correspond to an edge-direction at $\bm x$ in $P$,
we have that $\bm z^{i}\in H$.

Assume without loss of generality that the direction chosen according to the True Steepest-Edge pivot rule is $\bm z^1$.  Since $\bm x'\neq \bm x$, the direction $\bm z^1$ also corresponds to an edge-direction at $\bm x$ in $P$.  Then $\bm z^1$ is in the feasible cone, and by our earlier discussion, $\bm v^\T\bm z^1 = ||\bm z^1||_1 = 1$. Now, consider the optimization problem $\cal Q$ defined by
 \begin{align}
    &\max   \bm c^\T  \bm z \nonumber \\
    &\text{s.t.} \nonumber \\
    &\bm v^\T\bm z\leq 1, && \qquad (1) \nonumber \\
    &\bm z\in \mathcal{C}(B), & & \qquad (2) \nonumber
\end{align}
and let $P_{\cal Q}$ denote its feasible region.  Note that $P_{\cal Q}$ is a polyhedron.  

By construction, any generator $\bm z^{i}\in H$ is a vertex of $P_{\cal Q}$.  
In particular, this is true for those generators $\bm z^{i}$ which also correspond to edge-directions at $\bm x$ in $P$.

We will argue that the selected direction $\bm z^1$ is an optimal solution to the LP $\cal Q$.  Clearly, it is feasible for $\cal Q$. First, suppose for the sake of contradiction that $\cal Q$ is unbounded. Then there exists an entire ray of $\mathcal C(B)$ which is contained in $P_{\cal Q}$ on which the objective function $\bm c$ is unbounded, and therefore there exists such a ray that is an extreme ray of $\mathcal C(B)$.  Let this extreme ray be generated by $\bm z^{i}$.  

Then $\bm z^{i}$ does not correspond to an edge-direction at $\bm x$ in $P$, and therefore corresponds to a degenerate pivot at $B$. Furthermore, $\bm v^\T\bm z^{i} \leq 0$, and $\bm c^\T \bm z^{i} > 0$.  However, this implies that the pivot rule would have chosen the direction $\bm z^{i}$ and not $\bm z^1$, a contradiction. Thus, the LP $\cal Q$ is not unbounded.  This implies that all generators $\bm z^{i}$ of extreme rays of $\mathcal C(B)$ satisfying $\bm c^\T\bm z^{i} > 0$ generate extreme rays that intersect $H$.  

There exists an optimal vertex $\bm y$ of $P_{\cal Q}$, and since $\bm c^\T \bm z^1 > 0$, the optimal vertex is not $\bm 0$. Thus, $\bm y$ is precisely one of the generators 
that lies in $H$.  Let these generators be $\bm z^1,\ldots,\bm z^k$.  By the fact that $\bm z^1$ was selected, we have that $\bm z^1$ maximizes $\bm c^\T\bm z$ overall $\bm z\in\set{\bm z^1,\ldots,\bm z^k}$, and so $\bm z^1$ is an optimal solution to $\mathcal{Q}$, as desired.

We will now show that $\bm z^1$ corresponds to a steepest edge-direction at $\bm x$ in $P$.  
Let $\bm z'$ be any edge-direction at $\bm x$, and without loss of generality assume $||\bm z'||_1 =1$. Then $\bm z'$
is a feasible solution to $\cal Q$, as $\bm z' \in \mathcal{C}(\bm x) \subseteq \mathcal{C}(B)$.
Since $\bm z^1$ is an optimal solution to $\mathcal{Q}$, we have that 
$$\frac{\bm c^\T\bm z^1}{\|\bm z^1\|_1} = \bm c^\T \bm z^1 \geq \bm c^\T \bm z' = \frac{\bm c^\T \bm z'}{\|\bm z'\|_1},$$
as desired.
\end{proof}

The above result shows that our pivot rule guarantees the following:  whenever we perform a 
non-degenerate pivot, this always corresponds to moving along a steepest edge-direction at the corresponding extreme-point solution of the original LP. The following result, together with Lemma~\ref{lem:non-degenerateperformance}, provides a proof of Theorem~\ref{thm:polynomial_pivots1}:

\begin{thm}[See Theorem 1 of \cite{deloera2020pivot}]
\label{thm:deloerapreviouspaper}
Given a $0/1$-LP of the form~(\ref{lp}), the monotone path between any vertex and the optimum generated by following only steepest edge-directions has a length which is strongly-polynomial in the input size of the LP.
\end{thm}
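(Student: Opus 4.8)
The plan is to track the optimality gap along the monotone edge‑path, show that it contracts by a factor $1-\tfrac1n$ at each step, and then use integrality of the data --- together with a variable‑fixing argument in the style of Kitahara--Mizuno --- to turn the resulting bit‑size‑dependent estimate into a genuinely strongly polynomial one.

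\textbf{Set-up and the one-step contraction.} Let $\bm x_0,\bm x_1,\dots,\bm x_K$ be the vertices visited, with $\bm x_K$ optimal, fix an optimal vertex $\bm x^*$, and set $\Delta_i \coloneqq \bm c^\T\bm x^*-\bm c^\T\bm x_i\ge 0$. The heart of the argument is the claim $\Delta_{i+1}\le(1-\tfrac1n)\Delta_i$. To see it, note that $\bm x^*-\bm x_i$ is a feasible direction at $\bm x_i$, hence lies in the feasible cone $\mathcal C(\bm x_i)$, so we may write $\bm x^*-\bm x_i=\sum_k\lambda_k\bm g^k$ as a nonnegative combination of generators of the extreme rays of $\mathcal C(\bm x_i)$, i.e.\ of edge‑directions at $\bm x_i$. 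Because $\bm x_i$ is a $0/1$ point, $\mathcal C(\bm x_i)$ --- and hence every $\bm g^k$ --- lies in the single orthant $O_{\bm x_i}$ from Lemma~\ref{lem:non-degenerateperformance}, on which $\|\cdot\|_1$ agrees with the linear functional $(\bm 1-2\bm x_i)^\T(\cdot)$; therefore the decomposition is conformal, so $\|\bm x^*-\bm x_i\|_1=\sum_k\lambda_k\|\bm g^k\|_1$ and $\bm c^\T(\bm x^*-\bm x_i)=\sum_k\lambda_k\,\bm c^\T\bm g^k$. Averaging over the indices $k$ with weights $\lambda_k\|\bm g^k\|_1$, some index $k$ satisfies $\frac{\bm c^\T\bm g^k}{\|\bm g^k\|_1}\ge\frac{\bm c^\T(\bm x^*-\bm x_i)}{\|\bm x^*-\bm x_i\|_1}=\frac{\Delta_i}{\|\bm x^*-\bm x_i\|_1}\ge\frac{\Delta_i}{n}$, using $\|\bm x^*-\bm x_i\|_1\le n$. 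Since the rule always moves along a steepest edge‑direction $\bm g_i$, its slope $\rho_i$ is at least $\Delta_i/n$; moving maximally, $\bm c^\T(\bm x_{i+1}-\bm x_i)=\|\bm x_{i+1}-\bm x_i\|_1\,\rho_i\ge\rho_i\ge\Delta_i/n$ because $\bm x_{i+1}-\bm x_i$ is a nonzero $\{-1,0,1\}$‑vector, and the contraction follows.

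\textbf{Termination, and the main obstacle.} Since $\bm c$ is integral and every vertex of $P$ is $0/1$, each $\bm c^\T\bm x_i$ is an integer, so $\Delta_i$ is a nonnegative integer and already equals $0$ once $\Delta_i<1$; with $\Delta_0\le 2\|\bm c\|_1$ the contraction gives $K=O(n\log\|\bm c\|_1)$. Getting rid of the dependence on $\|\bm c\|$ is the hard part. The plan is a fixing lemma: using integrality of $A$, $D$ and $\bm c$, show that once the gap has dropped by more than a fixed $\mathrm{poly}(n)$ factor relative to some earlier iteration, a coordinate $j$ in which $\bm x_i$ still disagrees with $\bm x^*$ becomes permanently equal to $\bm x^*(j)$ for the rest of the path, because flipping it back would cost more objective value than the remaining gap permits. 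The delicate points are (i) choosing the threshold so it depends only on $n$ and the combinatorics of $P$, not on $\|\bm c\|$ --- morally because it compares two quantities that both scale linearly in $\bm c$ --- and (ii) verifying that a coordinate, once fixed, can never become unfixed along a monotone path; here the $0/1$ hypothesis and the single‑orthant structure are what make the argument go through. Since there are at most $n$ coordinates, and by the contraction only $O(n\log n)$ iterations elapse between consecutive fixings, this yields a strongly polynomial bound, concretely $O(n^2\log n)$.

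\textbf{Summary of difficulty.} The conformal decomposition is immediate from the single‑orthant observation already recorded inside the proof of Lemma~\ref{lem:non-degenerateperformance}, the one‑step contraction is then a short computation, and the integrality step is routine. The real work --- and the step I expect to be the obstacle --- is the fixing lemma: it is exactly where the $0/1$ hypothesis is indispensable, and it amounts to recasting the basis‑exchange reasoning of Kitahara--Mizuno in purely geometric, edge‑walk terms.
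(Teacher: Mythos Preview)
This theorem is not proved in the present paper at all: it is quoted from \cite{deloera2020pivot} and used as a black box together with Lemma~\ref{lem:non-degenerateperformance} to obtain Theorem~\ref{thm:polynomial_pivots1}. So there is no in-paper proof to compare against; I can only assess whether your sketch stands on its own.

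Your contraction step is fine and is indeed the standard opening move: the single-orthant observation makes the conical decomposition conformal, the averaging gives a steep edge of slope at least $\Delta_i/n$, and integrality of $\bm c$ plus $0/1$ vertices yields the weakly polynomial bound $O(n\log\|\bm c\|_1)$. No objection there.

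The gap is exactly where you flag it, and it is a real one rather than a technicality. Two concrete problems with the fixing-lemma sketch:
\begin{itemize}
\item The assertion that ``flipping coordinate $j$ back would cost more objective value than the remaining gap permits'' has no evident mechanism. A single coordinate flip contributes $\pm\bm c(j)$ to the objective, but an edge step changes many coordinates simultaneously, and nothing in the contraction argument isolates the contribution of a particular $j$ or ties it to $\Delta_i$. The Kitahara--Mizuno argument you are alluding to pins down a \emph{basic variable's value} via the ratio $\gamma/\tau$ of extreme BFS entries; there is no analogous scalar attached to a single $0/1$ coordinate along an edge walk.
\item Point (ii), that a coordinate once equal to $\bm x^*(j)$ stays equal to it, is false for arbitrary $\bm c$-monotone paths on $0/1$ polytopes, so if it is to hold here it must be a special feature of the \emph{steepest}-edge path. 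Your sketch gives no reason tied to steepness for why this should be so.
\end{itemize}
What your outline is missing, and what the argument in \cite{deloera2020pivot} actually exploits, is the monotonicity of the \emph{slopes themselves}: the steepest-edge ratios $\rho_i=\bm c^\T\bm g_i/\|\bm g_i\|_1$ form a non-increasing sequence along the path. This is the structural fact that lets one convert the geometric contraction into a strongly polynomial count (by controlling how long the path can linger near each slope level and how many levels there can be), and it does not appear anywhere in your proposal. Without it, the Kitahara--Mizuno-style fixing argument has nothing to grab onto, and I do not see how to close the gap along the lines you indicate.
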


We now provide an example showing that the standard Steepest-Edge pivot rule (using the $1$-norm) does not follow the same path as the True Steepest-Edge pivot rule.  In particular, it is possible for the Steepest-Edge pivot rule to perform a non-degenerate pivot at an extreme point solution $\bm x$ where the edge corresponding to that pivot is not a steepest edge-direction at $\bm x$.  First, we recall the definition of the Steepest-Edge pivot rule in terms of the extreme ray of $\mathcal{C}(B)$.

\begin{defn}
Given an LP $(P)$ of the form~(\ref{lp}) with feasible region $P$, let $(P')$ be the LP obtained by putting $(P)$ into standard equality form, let $B$ be the current basis of $(P')$, and let $\bm x$ be the vertex of $P$ associated to $B$.
The Steepest-Edge pivot rule selects as a pivot direction the generator of $\mathcal{C}(B)$ that maximizes $\frac{\bm c^\T\bm z}{\|\bm z\|_1}$ among all generators $\bm z$ of $\mathcal{C}(B)$.
\end{defn}

Now, consider the 3-dimensional $0/1$ polytope given by the convex hull of the points $(0,0,0)$, $(0,1,0)$, $(1,0,0)$, $(1,1,0)$, and $(0,0,1)$.  This is a $0/1$ pyramid over a square.  It can be easily checked that this polytope can be minimally described by the following inequalities:
\[
     \bm x(1) +  \bm x(3) \leq 1, \quad \bm x(2) +  \bm x(3) \leq 1, \quad \bm x(1) \geq 0, \quad \bm x(2) \geq 0,\quad \bm x(3) \geq 0.
\]
Furthermore, it is clear that at the point $\bm x' =(0,0,1)$, all but the last inequality are tight.  That is, these are the inequalities yielding the feasible cone $\mathcal{C}(\bm x')$.  The edge-directions at $\bm x'$ are given by $(0,0,-1)$, $(0, 1, -1)$, $(1, 0 ,-1)$, and $(1,1,-1)$.  Given the objective function $\bm c = (50,-1,0)$ to be maximized, it can be checked that the steepest edge-direction at $\bm x'$ is $\bm z^* = (1,0,-1)$, the edge-direction that moves to the optimal solution $(1,0,0)$.  Note that $\frac{\bm c^\T\bm z^*}{\|\bm z^*\|_1} = 25$.  The only other edge-direction at $\bm x'$ satisfying $\bm c^\T \bm z > 0$ is the direction $\bm z' = (1,1,-1)$ which moves to the point $(1,1,0)$.  Note that $\frac{\bm c^\T\bm z'}{\|\bm z'\|_1} = \frac{49}{3}$, and so $\bm z'$ is not a steepest edge-direction at $\bm x'$

Now, consider a basis $B$ associated with $\bm x'$ whose corresponding basic cone is yield by the inequalities $\bm x(1)\geq 0$, $ \bm x(1) + \bm x(3) \leq 1$, and $ \bm x(2) + \bm x(3) \leq 1$.  It can be checked that the edge-direction $\bm z^*$ does not generate an extreme ray of $\mathcal{C}(B)$, but the edge-direction $\bm z'$ does.  
The pivot directions at $B$ satisfying $\bm c^\T \bm z >0$ are the direction given by $\bm z'$ and $(0,-1,0)$. Therefore, $\bm z'$ is the pivot direction chosen by the Steepest-Edge pivot rule.  This gives a non-degenerate pivot, but as observed above, it does not correspond to a steepest edge-direction at $\bm x'$.

\subsection{Implementation of the True Steepest-Edge Pivot Rule}\label{sec:tableaux}

We now briefly describe the implementation of the True Steepest-Edge pivot rule.
Recall from Section~\ref{sec:algebraic_simplex} that in the context of the Simplex algorithm, we first put our original LP into equality form by adding slack variables.

Suppose we have some feasible basis $B$.  
For each $j\in N$, let $\bm z'^j$ be defined by $\bm z'^j(B) = A'^{-1}_BA'_j$, $\bm z'^j = 1$, and $\bm z'^j(i) = 0$ for all $i\in N\setminus\set{j}$.  
These vectors give the possible pivot directions.  Let $\bm v'$ be defined by
\begin{equation*}
    \bm v'(i) \coloneqq 
        \begin{cases}
            \,1 &\text{ if }\bm x(i) = 0 \text{ and }\bm x(i) \text{ is an original variable},\\
            -1 &\text{ if }\bm x(i) = 1 \text{ and }\bm x(i) \text{ is an original variable},\\
            0 &\text{ otherwise}.
        \end{cases}
\end{equation*}
That is, $\bm v'$ is the extension $\bm v$ (as defined earlier in Section~\ref{sec:steep-enough}) to the slack variables obtained by padding it with 0's. First, if there exists $j\in N$ such that $\bar{\bm c}'(j) > 0$ and $\bm v'^\T\bm z'^j \leq 0$, we choose $j$ to be the entering column (and we choose such a $j$ arbitrarily if more than one element of $N$ satisfies this condition).  Otherwise, we choose
\begin{equation}\label{eq:pivot_eq}
j = {\argmax}_{i \in N : \,\bar{\bm c}'(i) > 0} \,\left(\frac{\bar{\bm c}'(i)}{\bm v'^\T \bm z'^{i}}\right).
\end{equation}
Again, if more than one element of $N$ satisfies this condition we can we choose such a $j$ arbitrarily.

It may be that the choice of $\ell$ in Step 4 of our earlier description of Simplex is unique. However, we may encounter degeneracy, in which case it may not be. In that situation we use a \emph{lexicographic pivot rule} to choose the leaving variable. This is a well-known way to break ties and avoid cycling~\cite{MurtyLP,Murty2009,Terlaky2009}. 

Note that during the algorithm we carry the extra ``auxiliary cost'' vector $\bm v'^\T$. 
We can think of $\bm v'^\T$ as an additional row of the tableau that needs to be updated. The vector
$\bm v'^\T$ is zero always on the slack variables added and it has only $+1, -1$ entries for the indices of the original 
variables. After each pivot, we can easily update the entries of $\bm v'^\T$. The 
\emph{original} variables must take $0$ or $1$ values because they are vertices of a $0/1$ polyhedron. If after a pivot, an original variable goes from being $0$-valued to being $1$-valued, then we change the entry value in $\bm v'^\T$ from a $1$ to a $-1$.  The opposite switch occurs when we change one of the original variables from being $1$-valued to $0$-valued.

Finally, over the years there have been improvements on the implementations of the Simplex method. 
It is well-known that a lot of the steps can be performed faster by relying on sparsity of matrices and some numerical tricks, 
such as LU factorization,  but we refer the reader to Chapter 8 of \cite{vanderbei} for details. One special detail is that we
use the 1-norm to measure how steep the edge is. In our algorithm each iteration requires knowledge of the norms $\|A_B^{-1} A_h\|_1$. 
It is worth remarking that while for the most common 2-norm Steepest-Edge pivot rule Forrest and Goldfarb \cite{ForrestGoldfarb92} 
showed how to update these vector in a fast way, here we do not offer a speedup.

\section{Modifying the Shadow Pivot Rule} \label{sec:slimshadow}

The Shadow pivot rule is a fundamental tool in the study of the average case run-time of the Simplex method initiated by Borgwardt in \cite{borgwardtshadow}. This pivot rule gave rise to several algorithmic developments, including the arrival of \emph{smoothed complexity} \cite{tengspielman,vershynin,dadush+huiberts}. Intuitively, given a LP $\max \set{\bm c^\T \bm x : \bm x \in P}$ starting at a vertex ${\bm x}^0 \in P$, they find an auxiliary vector $\bm v$ such that $\bm{v}^{\T}\bm x$ is minimized at $\bm x^0$ that allows them to project $P$ into a polygon. The two paths of this polygon guide the possible improving paths for $P$. 

We present a novel modification of the Shadow pivot rule that instead comes from the theory of \emph{monotone path polytopes} first introduced by Billera and Sturmfels in \cite{BSFiberPoly} (see Chapter $9$ of \cite{zieglerbook} for an introduction and \cite{CellString} for more details on their structure). The vertices of the monotone path polytope are in natural correspondence with paths that the modified Shadow pivot rule may choose.  Billera and Sturmfels named these paths \emph{coherent monotone paths}. We will continue to use that 
name throughout because we will allow for situations that have not been considered in earlier treatments of Shadow pivot rule.  Namely, in contrast to the original setup of \cite{borgwardtshadow,tengspielman,vershynin,dadush+huiberts} we do not require that non-degeneracy conditions hold on the LPs we consider.

\subsection{The Shadow Rule for General Polyhedra}\label{sec:shadow_general}

For the original version of Shadow pivot rule, refer to Chapter 1 of the book of Borgwardt (\cite{borgwardtshadow}). In this section, we initially consider a general LP $\max \set{\bm c^\T \bm x : \bm x \in Q}$ where $Q$ is any polyhedron in the general form. For now $Q$ is not necessarily $0/1$.

We will prove results for general LPs in Lemma \ref{lem:submax} and Corollary \ref{cor:lengthbound}. Later we will restrict those results to $0/1$-LPs to prove the bounds on the lengths of monotone paths generated by two new edge rules for $0/1$-LPs: the \emph{Slim Shadow rule} and the \emph{Ordered Shadow rule}.  Like in the original Shadow pivot rule we follow the edges guided by a shadow. Later, in Section \ref{sec:slim_simplex}, we transform these two rules into actual pivot rules.

To start, a sequence of vertices $\Gamma = [\bm x^0, \bm x^1, \bm x^2, \dots, \bm x^k]$ is called a $\bm v$-\emph{monotone path} on $Q$ for $\bm v \in \mathbb{R}^{n}$ if $(\bm x^{i-1},\bm x^{i})$ is an edge of $Q$ and $\bm v^\T \bm x^{i-1} < \bm v^\T \bm x^{i}$ for $1 \le i \le k$. In the context of Shadow rules, we have a special type of monotone paths called \emph{coherent monotone paths}, which are constructed with both $\bm c$ and $\bm v$. Taking $\bm c$ and $\bm v$ together, we obtain a projection $\pi: \R^{n} \to \R^{2}$ given by $\pi(\bm{x}) = (\bm v^\T\bm{x}, \mbc \bm{x})$. Applying this projection to $Q$ yields a polygon $\pi(Q)$. This polygon is often called a \emph{shadow} of $Q$. 

To define coherence, we require the notion of an \emph{upper path} of $\pi(Q)$, depicted in Figure \ref{fig:upperpath}. Let $F_{0}$ and $F_{1}$ denote the $\bm{e}_1$-minimal and $\bm{e}_1$-maximal faces of $\pi(Q)$ respectively. Then let $\bm u^{0}$ and $\bm u^{1}$ be the $\bm{e}_2$-maximal vertices of $F_{0}$ and $F_{1}$, respectively. By convexity, the line segment between $\bm u^{0}$ and $\bm u^{1}$ is contained in $\pi(Q)$. Every point on the polygon lies either above or below this line segment, since the segment travels from an $\bm{e}_1$-minimum to an $\bm{e}_1$-maximum. 
The upper vertices are precisely the set of vertices that lie above this segment. Formally, let $L: \R \to \R$ be the equation of the line affinely spanned by $\bm{u}^{0}$ and $\bm{u}^{1}$. Let $\bm{u}$ be a vertex of $\pi(Q)$. Then $\bm{u}$ is a vertex of the upper path precisely when $L(\bm{e}_1^\T \bm{u}) \leq \bm{e}_2^\T \bm{u}$. The upper vertices form a path from $\bm u^{0}$ to $\bm{u}^{1}$ called the upper path.

A $\bm v$-monotone path $\Gamma$ in $Q$ is called $\bm{c}$-\emph{coherent} if $\pi$ maps $\Gamma$ to the upper path of $\pi(Q)$. Namely, all vertices of $\Gamma$ must be sent to either vertices or interior points of edges in the upper path, and every vertex of the upper path must have a vertex sent to it from $\Gamma$.  Note that this latter condition implies that the first vertex, $\bm x^0$, is the $\bm c$-maximum of the $\bm v$-minimal face of $Q$. A monotone path is called \emph{coherent} if there exists some $\bm{c} \in \mathbb{R}^{n}$ such that it is $\bm{c}$-coherent. In \cite{BSFiberPoly}, they observed that not all monotone paths on a polytope are coherent, and in \cite{MPPofCP}, the first and second authors showed that even on the octahedron some monotone paths are not coherent. We reproduce that example in Figure \ref{fig:cohpath}.

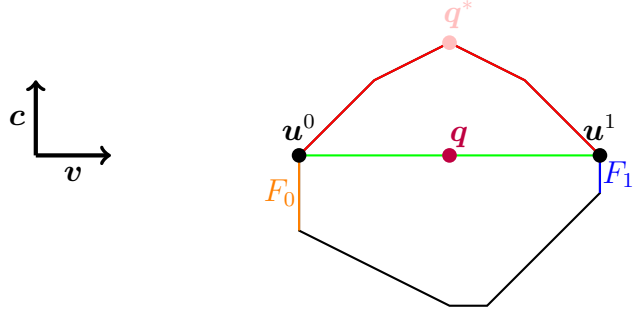
\begin{figure}
    \centering
    \[\begin{tikzpicture}[scale = .5]
    \draw[->, black, ultra thick] (-7,0) -- (-7, 2);
    \draw[->, black, ultra thick] (-7,0) -- (-5,0);
    \draw (-7.5,1) node {$\bm c$};
    \draw (-6,-.5) node {$\bm v$};
    \draw[black, thick] (0,0) -- (2,2) -- (4, 3) -- (6,2) -- (8,0) -- (8,-1) -- (6,-3) --(5,-4) -- (4,-4) -- (2,-3) -- (0,-2) -- (0,0);
    \draw[green, thick] (0,0) -- (8,0);
    \draw[red, thick] (0,0) -- (2,2) -- (4,3) -- (6,2) -- (8,0);
    \draw[orange, thick] (0,-2) -- (0,0);
    \draw (-.5,-1) node[orange] {$F_{0}$};
    \draw[blue, thick] (8,0) -- (8,-1);
    \draw (8.5, -.5) node[blue] {$F_{1}$};
    \draw (0,0) node[circle, fill, scale =.5] {};
    \draw (0,.75) node {$\bm{u}^{0}$};
    \draw (8,0) node[circle, fill, scale = .5] {};
    \draw (8,.75) node {$\bm{u}^{1}$};
    \draw[pink] (4.25, 3.75) node {$\bm{q}^{\ast}$};
    \draw[pink] (4, 3) node[circle, fill, scale = .5] {};
    \draw[purple] (4.25, 0.5) node {$\bm{q}$};
    \draw[purple] (4, 0) node[circle, fill, scale = .5] {};
\end{tikzpicture}
\]
    \caption{The red path is the upper path of the polygon. The edges, $F_{0}$ and $F_{1}$, are the $\bm{e}_1$-minimal and $\bm{e}_1$-maximal faces of the polygon respectively. The green line segment is exactly the line segment from $\bm{u}^{0}$, the $\bm{e}_2$-maximum of $F_{0}$, to $\bm{u}^{1}$, the $\bm{e}_2$-maximum of $F_{1}$. The choice of $\bm{q}^{\ast}$ and $\bm{q}$ from the proof of Lemma \ref{lem:submax} are shown as well. 
    } 
    \label{fig:upperpath} 
\end{figure}

To find a $\bm{c}$-coherent $\bm v$-monotone path in $Q$, starting at a vertex $\bm{x}^{i}$ we 
maximize the slope in the polygon $\pi(Q)$ among all $\bm v$-improving edges starting at $\bm x^{i}$. Concretely, we have 
\[\bm x^{i+1} = \argmax_{\bm u \in N_{\bm{v}}(\bm{x}^{i})}\left( \frac{\mbc(\bm u-\bm x^{i})}{\bm v^\T(\bm u-\bm x^{i})}\right).\]
 Here we make no assumption that the LP is non-degenerate. Furthermore, the choice of $\bm{x}^{i+1}$ here may not be unique when $\bm{c}$ is not generic. When it is not unique, one chooses any maximizer. From this observation, we obtain a general notion of a Shadow rule. 
\begin{defn}
Let $(Q)$ be an LP with feasible region $Q$ and objective function $\bm{c}^{\T} \bm{x}$. Then a Shadow rule constructs a path on the 1-skeleton of $Q$ from a starting vertex $\bm{x}^{0}$ to an optimal LP solution by choosing the next vertex in the path via
\[\bm x^{i+1} = \argmax_{\bm u \in N_{\bm{v}}(\bm{x}^{i})}\left( \frac{\mbc(\bm u-\bm x^{i})}{\bm v^\T(\bm u-\bm x^{i})}\right),\]
where $\bm{v}$ is an objective function such that $\bm{x}^{0}$ is the $\bm{c}$-maximum of the $\bm{v}$-minimal face of $\pi(Q)$.  
\end{defn}
Note that a Shadow rule must come with a mechanism for choosing $\bm{v}$ given an initial vertex $\bm{x}^{0}$. In the probabilistic analysis of the performance of the Simplex method using Shadow rules such as \cite{borgwardtshadow, tengspielman, DH16}, the choice of $\bm{v}$ is made randomly. In contrast, our Shadow Rules will provide a deterministic way of making this choice.

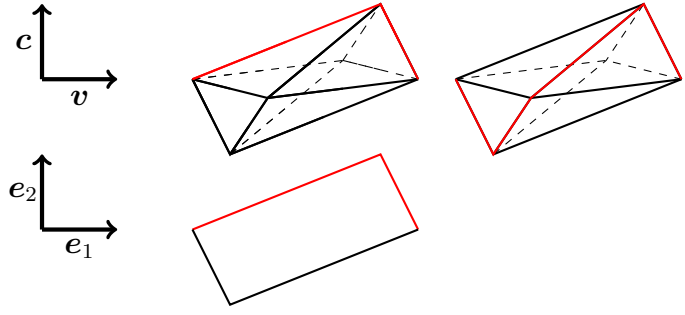
\begin{figure}
    \centering
    \begin{tikzpicture}[scale = .5]
    \draw[->, black, ultra thick] (-7,0) -- (-7, 2);
    \draw[->, black, ultra thick] (-7,0) -- (-5,0);
    \draw (-7.5,1) node {$\bm c$};
    \draw (-6,-.5) node {$\bm v$};
    \draw[->, black, ultra thick] (-7,0-4) -- (-7, 2-4);
    \draw[->, black, ultra thick] (-7,0-4) -- (-5,0-4);
    \draw (-7.5,1-4) node {$\bm{e}_{2}$};
    \draw (-6,-.5-4) node {$\bm{e}_{1}$};
    \draw[black, dashed] (-3,0) -- (-1,.25);
    \draw[black, dashed] (1,.5) -- (2,.25);
    \draw[black, dashed] (-3,0) -- (1,.5) -- (3,0);
    \draw[black, thick] (-3,0) -- (-2.5,-1);
    \draw[black, thick] (-2,-2) -- (.5,-1);
    \draw[black, thick] (-3,0) -- (-2,-2) -- (3,0);
    \draw[black, thick] (-3,0) -- (-2,-.25);
    \draw[black, thick] (-1,-.5) -- (1,-.25);
    \draw[black, thick] (-3,0) -- (-1,-.5) -- (3,0);
    \draw[black, thick] (-2,-2) -- (-1.5,-1.25);
    \draw[black, thick] (-1,-.5) -- (.5, .75);
    \draw[black, thick] (-2,-2) -- (-1,-.5) -- (2,2);
    \draw[black, dashed] (-2, -2) -- (-.5, -.75);
    \draw[black, dashed] (1.5, 1.25) -- (2,2);
    \draw[black, dashed] (1,.5) -- (1.5,1.25);
    \draw[black, dashed] (-2,-2) -- (1,.5);
    \draw[black, thick] (-3,0) -- (-2,-2) -- (-1, -.5) -- (2,2) -- (3,0);
    \draw[red, thick] (-3,0) -- (2,2) -- (3,0);
    \draw[red, thick] (-3,0) -- (-.5,1);
    \draw[red, thick] (2,2) -- (2.5,1);
    \draw[red, thick] (4-7,0-4) -- (9-7,2-4) -- (10-7,0-4);
    \draw[black, thick] (4-7,0-4) -- (5-7,-2-4) -- (10-7,0-4);
    \draw[black, thick] (-3+7,0) -- (2+7,2) -- (3+7,0);
    \draw[black, dashed] (-3+7,0) -- (1+7,.5) -- (3+7,0);
    \draw[black, thick] (-3+7,0) -- (-2+7,-2) -- (3+7,0);
    \draw[black, thick] (-3+7,0) -- (-1+7,-.5) -- (3+7,0);
    \draw[black, thick] (-2+7,-2) -- (-1+7,-.5) -- (2+7,2);
    \draw[black, dashed] (-2+7,-2) -- (1+7,.5) -- (2+7,2);
    \draw[red, thick] (-3+7,0) -- (-2+7,-2) -- (-1+7, -.5) -- (2+7,2) -- (3+7,0);
\end{tikzpicture}
    \caption{In the top of the center of the picture, a $\bm{c}$-coherent $\bm v$-monotone path is drawn in red on the octahedron $\diamond^{3}$. The corresponding shadow $\pi(\diamond^{3})$ is on the bottom of the center part of the picture where the upper path corresponding to the coherent monotone path is highlighted in red. Under the projection $\pi$, $\bm v$ and $\bm{c}$ induce the $x$ and $y$ coordinates, respectively as indicated by the arrows. On the right side of the picture is an example of an incoherent monotone path on the octahedron from \cite{MPPofCP}.}
    \label{fig:cohpath}
\end{figure}

The following lemma shows that $\bm{c}$-coherent $\bm v$-monotone paths not only lead to a maximum of $\bm v$ but also go through a maximum of $\bm c$. Note that this lemma holds for all $\bm{c}$-coherent $\bm v$-monotone paths on any polytope regardless of the choice of $\bm v$. 

\begin{lem}
\label{lem:submax}
Consider the LP $\max \{\mbc \bm{x}: \bm{x} \in Q\}$ for any polytope $Q$. Let $\bm v \in \R^{n} \setminus \{\mathbf{0}\}$, and let $\Gamma = [\bm{x}^{0}, \bm{x}^{1}, \bm x^{2}, \dots, \bm x^{k}]$ be a $\bm{c}$-coherent $\bm v$-monotone path in $Q$. Then there exists some $0 \leq i \leq k$ such that $\bm x^{i}$ maximizes $\bm c$ on $Q$ and such that the portion of the path from $\bm x^{0}$ to $\bm x^{i}$ is both $\bm{c}$-monotone and $\bm v$-monotone. 
\end{lem}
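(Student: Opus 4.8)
The plan is to work inside the shadow polygon $\pi(Q)$, where $\pi(\bm x) = (\bm v^\T\bm x,\, \bm c^\T\bm x)$, and to exploit concavity of its upper boundary. First I would note that, since $F_0$ and $F_1$ are the $\bm e_1$-minimal and $\bm e_1$-maximal faces of $\pi(Q)$ and $\bm u^0,\bm u^1$ are their $\bm e_2$-maxima, the upper path is exactly the full upper boundary of $\pi(Q)$, i.e.\ the graph of a concave piecewise-linear function $h$ defined on the interval $[\bm e_1^\T\bm u^0,\, \bm e_1^\T\bm u^1]$. Setting $t_j := \bm v^\T\bm x^j$, the $\bm v$-monotonicity of $\Gamma$ gives $t_0 < t_1 < \cdots < t_k$, and $\bm c$-coherence (that $\pi$ maps $\Gamma$ onto the upper path) gives $\pi(\bm x^j) = (t_j, h(t_j))$, hence $\bm c^\T\bm x^j = h(t_j)$, for every $j$.

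Second, I would locate the $\bm c$-optimum on $\Gamma$. Let $M := \max_{\bm x\in Q}\bm c^\T\bm x = \max_{\bm y\in\pi(Q)}\bm e_2^\T\bm y$; this value is attained on the upper boundary, at a vertex of $\pi(Q)$. Let $\bm q^*$ be the leftmost such vertex (the $\bm q^*$ of Figure~\ref{fig:upperpath}). Being a vertex of the upper path, $\bm q^*$ is the image of some $\bm x^i$, because coherence forces every upper-path vertex to be hit by $\Gamma$; then $\bm c^\T\bm x^i = \bm e_2^\T \bm q^* = M$, so $\bm x^i$ maximizes $\bm c$ on $Q$.

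Third --- the only step carrying genuine content --- I would show that the prefix $[\bm x^0,\dots,\bm x^i]$ is $\bm c$-monotone; it is automatically $\bm v$-monotone, being a sub-path of $\Gamma$. Since $\bm q^*$ is the leftmost point of the upper boundary at height $M$, we have $h(t) < M$ for all $t < t_i = \bm e_1^\T\bm q^*$. Concavity of $h$ then forces $h$ to be strictly increasing on $[t_0, t_i]$: if its (non-increasing) slope were $\le 0$ at some point of that interval, $h$ would be non-increasing from there on to $t_i$, contradicting $h<M$ on the interior together with $h(t_i)=M$. Hence $h(t_0) < h(t_1) < \cdots < h(t_i)$, i.e.\ $\bm c^\T\bm x^0 < \cdots < \bm c^\T\bm x^i$, which is exactly $\bm c$-monotonicity of the prefix.

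I expect the main obstacle to be the bookkeeping of the first step: justifying carefully that the upper path coincides with the upper boundary of $\pi(Q)$, handling degenerate cases (e.g.\ $F_0$ or $F_1$ being a single vertex, or a horizontal top edge where $\bm q^*$ must be chosen as an endpoint), and confirming that $\pi$ sends each $\bm x^j$ to the unique point of the upper path with first coordinate $t_j$ rather than merely somewhere on it. Once this dictionary between $\Gamma$ and the graph of $h$ is in place, the concavity argument closing the proof is immediate.
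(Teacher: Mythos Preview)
Your proposal is correct and follows essentially the same route as the paper: both arguments project to the shadow $\pi(Q)$, observe that the $\bm c$-maximum is attained at a vertex of the upper path (hence at some $\bm x^i$ by coherence), and then use concavity of the upper boundary (equivalently, that consecutive slopes along the upper path are non-increasing) to conclude that the path is $\bm c$-monotone up to that vertex. The only cosmetic difference is that you package the upper path as the graph of a concave function $h$ and argue via monotonicity of $h$, whereas the paper phrases the same step directly in terms of decreasing slopes between consecutive $\pi(\bm x^j)$.
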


\begin{proof}
Let $\pi: Q \to \R^{2}$ be defined by $\pi(\bm{x}) = (\bm v^\T\bm{x}, \mbc\bm{x})$. Equivalently, we have $\bm{e}_{1}^{\T} \pi(\bm{x}) = \bm v^\T\bm x$ and $\bm{e}_{2}^{\T} \pi(\bm{x}) = \mbc\bm{x}$. Since $\Gamma$ is a $\bm{c}$-coherent $\bm v$-monotone path, $\pi(\Gamma)$ follows the upper path in $\pi(Q)$. Let $\bm{x}^{0}, \bm{x}^{1}, \dots, \bm{x}^{k}$ denote the vertices of $\Gamma$, and let $\bm{u}^{0}$ and $\bm{u}^{1}$ denote $\pi(\bm x^0)$ and $\pi(\bm{x}^{k})$, the first and final vertices of the upper path $\pi(\Gamma)$, respectively. As in the definition of an upper path, define $L: \R \to \R$ to be the equation of the line passing through $\bm{u}^{0}$ and $\bm{u}^{1}$. 

Let $\bm{q}^{\ast}$ be an $\bm{e}_2$-maximal vertex of $\pi(Q)$. Note that $\bm{u}^{0}$ and $\bm{u}^{1}$ are $\bm{e}_{1}$-minimal and $\bm{e}_{1}$-maximal respectively due to being the first and last vertices of the upper path. Hence, $\bm{e}_{1}^{\T} \bm{u}^{0} \leq \bm{e}_{1}^{\T}\bm{q}^{\ast} \leq \bm{e}_{1}^{\T} \bm{u}^{1}$. It follows that the point $\bm{q} = (\bm{e}_{1}^{\T} \bm{q}^{\ast}, L(\bm{e}_{1}^{\T}\bm{q}^{\ast}))$ must lie on the line segment from $\bm{u}^{0}$ to $\bm{u}^{1}$ and is therefore contained in $\pi(Q)$ by convexity. Since $\bm{q}^{\ast}$ is $\bm{e}_{2}$-maximal by assumption, $\bm{e}_{2}^{\T}\bm{q}^{\ast} \geq \bm{e}_{2}^{\T}\bm{q} = L(\bm{e}_{1}^{\T} \bm{q}^{\ast}).$ Thus, by definition, $\bm{q}^{\ast}$ must be a vertex in the upper path. 

Thus, all $\bm{e}_{2}$-maximal vertices must be part of the upper path of $\pi(Q)$.  By our assumption of coherence, there must be a vertex in the coherent monotone path $\Gamma$ that projects to a $\bm{e}_{2}$-maximal vertex in $\pi(Q)$. Then, since $\mbc \bm{x} = \bm{e}_2^\T \pi(\bm{x})$ for each $\bm{x}$ in $Q$, the $\bm{c}$-maximum of $Q$ is attained at a point $\bm{x} \in Q$ exactly when $\pi(\bm{x})$ is $\bm{e}_{2}$-maximal. Therefore, the $\bm{c}$-maximum is attained on $\Gamma$. 

By convexity, the slope from $\pi(\bm{x}^{i})$ to $\pi(\bm{x}^{i+1})$ must be at least the slope from $\pi(\bm{x}^{i+1})$ to $\pi(\bm{x}^{i+2})$ for each $0 \leq i \leq k-2$. Hence, the upper path must be strictly $\bm{e}_2$-monotone exactly until it reaches a $\bm{e}_{2}$-maximum of $\pi(Q)$. Recall again that $\bm{e}_{2}^{\T} \pi(\bm{x}^{i}) = \bm{c}^{\T} \bm{x}^{i}$ for all $0 \leq i \leq k$. Thus, $\Gamma$ must be $\bm{c}$-monotone until it reaches a $\bm{c}$-maximum on $Q$, meaning that $\Gamma$ is both $\bm v$-monotone and $\bm{c}$-monotone until it reaches a $\bm{c}$-maximum as desired.
\end{proof}

To prove Theorems~\ref{thm:slim_shad_Simplex} and~\ref{thm:polynomial_pivots2} the key idea will be to choose the auxiliary vector $\bm v$ carefully so that the corresponding shadow paths are always short. The following corollary provides a bound on the lengths of the paths.

\begin{cor}
\label{cor:lengthbound}
Let $Q \subseteq \R^{n}$ be a polytope, and let $\bm v \in \R^{n} \setminus \{\mathbf{0}\}$. Denote the set of vertices of $Q$ by $V$, and let $F$ be the face of $Q$ minimized by $\bm v$. Then for any objective function $\bm c$, the $\bm{c}$-coherent $\bm v$-monotone path from a $\bm{c}$-maximum of $F$ to a $\bm{c}$-maximum of $Q$ is of length at most $|\set{\bm v^\T\bm u: \bm u\in V}|-1$. 
\end{cor}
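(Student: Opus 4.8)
The plan is to show that every vertex $\bm x^i$ of the $\bm c$-coherent $\bm v$-monotone path lies on a \emph{distinct} fiber of the linear functional $\bm v^\T$, i.e.\ the values $\bm v^\T \bm x^0, \bm v^\T \bm x^1, \dots, \bm v^\T \bm x^i$ are pairwise distinct. Since all these values lie in the set $\set{\bm v^\T \bm u : \bm u \in V}$, and moreover $\bm v^\T \bm x^0$ attains the minimum of that set (because $\bm x^0$ lies on $F$, the $\bm v$-minimal face), a path consisting of $i+1$ vertices with distinct $\bm v$-values can use at most $|\set{\bm v^\T \bm u : \bm u \in V}|$ distinct values; hence it has at most $|\set{\bm v^\T\bm u : \bm u\in V}| - 1$ edges. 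The claimed bound then follows provided we truncate the path at the first $\bm c$-maximum guaranteed by Lemma~\ref{lem:submax}.

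First I would invoke Lemma~\ref{lem:submax} to replace $\Gamma$ by its initial segment $[\bm x^0, \dots, \bm x^i]$ ending at a $\bm c$-maximum, noting that this segment is still both $\bm c$-monotone and $\bm v$-monotone; this is the path whose length we must bound. Next I would observe that $\bm v$-monotonicity of the path gives $\bm v^\T \bm x^0 < \bm v^\T \bm x^1 < \cdots < \bm v^\T \bm x^i$, so in particular the $\bm v$-values along the (truncated) path are strictly increasing, hence distinct. This is the crucial point: a $\bm v$-monotone path automatically visits each $\bm v$-level at most once. Each $\bm v^\T\bm x^j$ belongs to $\set{\bm v^\T \bm u : \bm u \in V}$ since each $\bm x^j$ is a vertex of $Q$, so a strictly increasing chain of length $i+1$ inside a set of size $|\set{\bm v^\T\bm u : \bm u\in V}|$ forces $i \le |\set{\bm v^\T\bm u : \bm u\in V}| - 1$, i.e.\ the path has at most that many edges.

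One subtlety to address cleanly is that the statement refers to ``the'' $\bm c$-coherent $\bm v$-monotone path from a $\bm c$-maximum of $F$ to a $\bm c$-maximum of $Q$, whereas Lemma~\ref{lem:submax} is phrased for a full coherent monotone path $\Gamma$; I would simply note that any such path either is already a coherent monotone path or extends to one, and that the truncation argument from Lemma~\ref{lem:submax} identifies the prefix ending at the $\bm c$-maximum. I should also remark that the starting vertex $\bm x^0$ being a $\bm c$-maximum of $F$ is consistent with the coherence requirement (it is forced to be the $\bm c$-maximum of the $\bm v$-minimal face), so no extra hypothesis is needed.

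I do not anticipate a genuine obstacle here: once Lemma~\ref{lem:submax} is in hand, the argument is essentially the pigeonhole observation that a strictly $\bm v$-monotone path cannot revisit a $\bm v$-level. The only place requiring a little care is the bookkeeping of which path we are bounding (full coherent path vs.\ its $\bm c$-monotone prefix) and confirming that strict monotonicity, not just weak monotonicity, holds along the path — which it does by the definition of a $\bm v$-monotone path given earlier in the section ($\bm v^\T \bm x^{j-1} < \bm v^\T \bm x^{j}$).
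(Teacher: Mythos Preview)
Your proposal is correct and follows essentially the same approach as the paper: use strict $\bm v$-monotonicity to conclude that the $\bm v$-values along the path are pairwise distinct, apply pigeonhole against the set $\set{\bm v^\T\bm u : \bm u\in V}$, and invoke Lemma~\ref{lem:submax} to guarantee that the path reaches a $\bm c$-maximum. The paper's proof is just a more terse version of what you wrote.
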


\begin{proof}
Since the path is strictly $\bm v$-monotone, any two vertices $\bm{x}$ and $\bm{y}$ in the path must satisfy $\bm{v}^{\T} \bm{x} \neq \bm{v}^{\T}\bm{y}$. It follows that the length of the path is at most $|\set{\bm v^\T\bm u: \bm u\in V}|-1$. Furthermore, since the path is $\bm{c}$-coherent it must reach a maximum of $\bm c$ by Lemma \ref{lem:submax}.
\end{proof}

For the original Shadow pivot rule in \cite{borgwardtshadow}, the choice of $\bm v$ is taken to be a random vector such that $\bm{v}^{\T}\bm{x}$ is minimized uniquely at the starting vertex. For our Shadow rules, we instead take advantage of the structure of $0/1$ polytopes to make this choice of $\bm v$ explicitly to guarantee that $\bm{v}^{\T} \bm{x}$ it always takes on few values, which by Corollary \ref{cor:lengthbound}, yields short paths. In essence, we try to make $\bm v$ as degenerate as possible in place of Borgwardt's generic choice. 

\subsection{The Slim Shadow Rule}\label{sec:slim_edge}

We now return to the case of $0/1$-LPs of the form $\max \set{\bm c^\T \bm x : \bm x \in P}$ where $P = \pol$, and the feasible region is a $0/1$ polytope of dimension $d$. The Slim Shadow rule is given by the following Shadow rule:

\begin{defn}\label{def:slim_edge}
Given a $0/1$-LP of the form~(\ref{lp}) with feasible region $P$, let $\bm x^0$ be any initial vertex of $P$. Let $\bm v = \bm 1 - 2\bm x^{0}$.  At a vertex $\bm x^{i}$ of $P$, the Slim Shadow rule moves to a neighbor
\[
\bm x^{i+1} = \argmax_{\bm u \in N_{\bm{v}}(\bm{x}^{i})}\left( \frac{\mbc(\bm u-\bm x^{i})}{\bm v^\T(\bm u- \bm x^{i})}\right).
\]
\end{defn}

Recall that, by Lemma \ref{lem:submax}, the maximum in this definition is always attained at a neighbor $\bm u$ satisfying $\bm c^\T \bm u > \bm c^\T \bm x^{i}$ whenever $\bm{x}^{i}$ is not $\bm{c}$-maximal.

Note that, although the Slim Shadow rule defines $\bm v$ \textit{similarly} to the way it is defined for the True Steepest-Edge pivot rule, they are not precisely the same. While for the True Steepest-Edge pivot rule we change $\bm{v}$ at each new extreme point, for the Slim Shadow rule, $\bm v$ never changes.  However, they \textit{are }defined identically at the \textit{initial} extreme point solution, and as we will see in Section~\ref{sec:slim_simplex} when we extend this rule to a pivot rule, the vector $\bm v$ also plays a similar role to the one it plays in the analysis of the True Steepest-Edge rule.

As discussed earlier, we chose this $\bm v$ because $\bm{v}^{\T}\bm{x}$ takes on very few distinct values in $0/1$-LPs.  For example, 
consider the case where $P$ is the $0/1$ cube $[0,1]^{n}$. Then for $\bm v = \bm{1}$, $\bm v^\T\bm x$ takes 
on precisely $n+1$ values at vertices of $P$, given by the possible numbers of nonzero coordinates in each vertex of the cube. 
For the Slim Shadow rule on the cube, if we choose $\bm{0}$ as our starting point, we have $\bm v = \bm{1} - 2(\bm{0}) = \bm{1}$. 
Thus, Corollary $\ref{cor:lengthbound}$ tells us that the length of a monotone path chosen by the Slim Shadow rule on the cube 
starting at the point $\bm{0}$ is at most $n$. We generalize this bound to all $0/1$ polytopes. 

\begin{thm}
\label{thm:slimshadbound}
On any $0/1$-LP of the form (\ref{lp}), where $n$ is the number of variables in the description, 
the Slim Shadow rule reaches an optimal solution by performing at most $n$ steps.
\end{thm}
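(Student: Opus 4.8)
The plan is to apply Corollary~\ref{cor:lengthbound} with $Q = P$ and the auxiliary vector $\bm v = \bm 1 - 2\bm x^0$ chosen by the Slim Shadow rule, so that the whole argument reduces to two elementary checks: that this $\bm v$ is an admissible choice for a Shadow rule on $P$, and that $\bm v^\T$ takes at most $n+1$ distinct values on the vertex set $V$ of $P$.

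First I would identify the $\bm v$-minimal face of $P$. Since $\bm x^0$ is a $0/1$ vertex we have $\bm v(i) = 1$ where $\bm x^0(i) = 0$ and $\bm v(i) = -1$ where $\bm x^0(i) = 1$, and for every vertex $\bm u$ of $P$ (again a $0/1$ vector)
\[
\bm v^\T\bm u - \bm v^\T\bm x^0 \;=\; \sum_{i:\,\bm x^0(i)=0}\bm u(i) \;+\; \sum_{i:\,\bm x^0(i)=1}\bigl(1-\bm u(i)\bigr),
\]
a sum of nonnegative integers that vanishes precisely when $\bm u = \bm x^0$. Hence $\bm x^0$ is the \emph{unique} minimizer of $\bm v^\T\bm x$ over $P$, the $\bm v$-minimal face $F$ equals $\set{\bm x^0}$, and $\bm x^0$ is trivially its $\bm c$-maximum. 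So $\bm v$ is an admissible choice, and the sequence of vertices produced by the Slim Shadow rule is exactly a $\bm c$-coherent $\bm v$-monotone path starting at $\bm x^0$.

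Next I would count: for any $0/1$ vertex $\bm u$, the quantity $\bm v^\T\bm u = \sum_{i:\,\bm x^0(i)=0}\bm u(i) - \sum_{i:\,\bm x^0(i)=1}\bm u(i)$ is an integer lying between $-|\set{i:\bm x^0(i)=1}|$ and $|\set{i:\bm x^0(i)=0}|$; as these sets partition $[n]$, this interval contains exactly $n+1$ integers, so $|\set{\bm v^\T\bm u : \bm u \in V}| \le n+1$. Corollary~\ref{cor:lengthbound} then gives that the $\bm c$-coherent $\bm v$-monotone path from the $\bm c$-maximum of $F$ (namely $\bm x^0$) to a $\bm c$-maximum of $P$ has length at most $|\set{\bm v^\T\bm u : \bm u \in V}| - 1 \le n$. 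By Lemma~\ref{lem:submax} this path passes through a $\bm c$-maximal vertex of $P$; reading the Slim Shadow rule as halting at the first such vertex, the truncated path is still $\bm v$-monotone and therefore has at most $n$ edges, which is the claimed bound.

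I do not anticipate a genuine difficulty: the only two points needing care are the verification that $\bm v = \bm 1 - 2\bm x^0$ is degenerate enough to make $\bm x^0$ the unique $\bm v$-minimal vertex of $P$ — this is what lets us invoke the coherence machinery and Corollary~\ref{cor:lengthbound} cleanly, and in particular what guarantees the Slim Shadow path is actually coherent — and the elementary bound of $n+1$ on the number of values of $\bm v^\T$ on vertices. Everything else is a direct consequence of the general results on coherent monotone paths proved earlier in this section.
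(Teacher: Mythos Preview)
Your proposal is correct and follows essentially the same approach as the paper: verify that $\bm v = \bm 1 - 2\bm x^0$ makes $\bm x^0$ the unique $\bm v$-minimizer on $P$, bound the number of values $\bm v^\T$ takes on $0/1$ vectors by $n+1$, and apply Corollary~\ref{cor:lengthbound}. The paper's argument is slightly terser (it observes the unique-minimizer fact directly on the cube $[0,1]^n$ rather than via the explicit difference computation), but the structure and key ideas are identical.
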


\begin{proof}[Proof of Theorem \ref{thm:slimshadbound}]
Let the LP be $\max \set{\bm c^\T \bm x : \bm x \in P}$ where $P = \pol$ is a $0/1$ polytope. Let $\bm{x}^{0}$ be an initial extreme point solution. Let $S = \{ s \in [n]: \bm{x}^{0}(s) = 1\}$. Note that in the cube $[0,1]^{n}$, $\bm{x}^{0}$ is the unique minimizer of the linear function $\bm v^\T \bm x$ where $\bm v = \bm 1 - 2\bm x^0$. Hence, $\bm x^{0}$ is the unique $\bm v$-minimizer on $P$, since all vertices of $P$ are vertices of the cube.

By Corollary \ref{cor:lengthbound}, the $\bm{c}$-coherent $\bm v$-monotone path reaches the maximum of $\bm c$ from $\bm x^0$ in at most $|\{\bm{v}^{\T} \bm{x}: \bm{x} \in V\}| - 1$ steps. Let $\bar{S} = [n]\setminus S$.  
Then $-|S| \leq \bm v^\T \bm x \leq |\bar{S}|$ for all $\bm x \in \{0,1\}^{n}$, so $|\{\bm{v}^{\T} \bm{x}: \bm{x} \in V\}| \leq |S| + |\bar{S}|+1 = n + 1$. Therefore, the length of the path is at most $n$. 
\end{proof}

For a few special cases, we may tighten the bounds on the lengths of paths found by the Slim Shadow rule.
\begin{lem}
\label{lem:sparsity}
On any $0/1$-LP of the form (\ref{lp}) in which the number of nonzero coordinates among all vertices is a constant $M$, the Slim Shadow rule takes at most $M$ steps. 
\end{lem}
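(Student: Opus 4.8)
The plan is to reuse, almost verbatim, the structure of the proof of Theorem~\ref{thm:slimshadbound}: bound the length of the $\bm{c}$-coherent $\bm v$-monotone path that the Slim Shadow rule traces using Corollary~\ref{cor:lengthbound}, and then estimate the number of distinct values the auxiliary functional $\bm v^\T\bm x$ attains on the vertex set $V$ of $P$. Writing $\bm x^0$ for the starting vertex and $\bm v = \bm 1 - 2\bm x^0$, the first step is to note, exactly as in Theorem~\ref{thm:slimshadbound}, that $\bm x^0$ is the unique minimizer of $\bm v^\T\bm x$ over the cube $[0,1]^n$, hence the unique $\bm v$-minimizer of $P$ since every vertex of $P$ is a $0/1$ point; therefore Corollary~\ref{cor:lengthbound} applies with $F = \set{\bm x^0}$ and with $\bm x^0$ itself, the $\bm c$-maximum of $F$, as the start of the path.

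The heart of the argument is a short value computation that invokes the hypothesis. Let $S = \set{i \in [n] : \bm x^0(i) = 1}$ be the support of $\bm x^0$, so that $\bm v(i) = +1$ for $i \notin S$ and $\bm v(i) = -1$ for $i \in S$. For an arbitrary vertex $\bm u$ of $P$ with support $T$ we have $\bm v^\T\bm u = |T \setminus S| - |T \cap S|$. Now use that every vertex of $P$ has exactly $M$ nonzero coordinates: this forces $|S| = |T| = M$, whence $\bm v^\T\bm u = M - 2|T \cap S|$. Since $|T \cap S|$ is an integer between $0$ and $M$, the functional $\bm v^\T\bm x$ takes at most $M+1$ distinct values on $V$, namely those in $\set{-M, -M+2, \dots, M-2, M}$. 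Plugging this into Corollary~\ref{cor:lengthbound} gives a path of length at most $(M+1)-1 = M$, as claimed.

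I do not expect a real obstacle; the lemma is a direct specialization of Theorem~\ref{thm:slimshadbound}. The one point requiring care is that the hypothesis must be applied to both $\bm x^0$ and $\bm u$: it is essential that all vertices share the \emph{same} count $M$ of nonzero coordinates. If one only assumed at most $M$ nonzeros, then $\bm v^\T\bm u = |T\setminus S| - |T\cap S|$ could in principle run over on the order of $2M$ distinct integers (the range $[-|S|,M]$ with $|S|$ possibly as large as $M$), and Corollary~\ref{cor:lengthbound} would then yield only the weaker bound of roughly $2M$ steps. Accordingly, the write-up should make the two equalities $|S| = M$ and $|T| = M$ explicit.
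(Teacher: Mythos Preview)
Your proof is correct and follows essentially the same approach as the paper's: both arguments compute that $\bm v^\T\bm u$ takes at most $M+1$ distinct values on vertices and then invoke Corollary~\ref{cor:lengthbound}. The paper phrases the value count by splitting $(\bm 1 - 2\bm x^0)^\T\bm u = \bm 1^\T\bm u - 2(\bm x^0)^\T\bm u$, noting that $\bm 1^\T\bm u$ is constant and $(\bm x^0)^\T\bm u = |T\cap S|$ ranges over at most $M+1$ values, which is arithmetically identical to your computation $\bm v^\T\bm u = M - 2|T\cap S|$.
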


\begin{proof}
Let the LP be $\max \set{\bm c^\T \bm x : \bm x \in P}$ where $P = \pol$ is a $0/1$ polytope. Let $\bm{x}^{0}$ be an initial vertex, and let $S = \{s \in [n]: \bm{x}^{0}(s) = 1\}$. Then by assumption, $|S| = M$, and the linear function $-(\bm{x}^{0})^{\T}\bm x$ takes on at most $M + 1$ distinct values on $P$ given by the different possible sizes of subsets of $S$. Furthermore, by assumption, $\bm{1}^{\T}\bm x$ always yields the same value when applied to any vertex on $P$. Hence, $(\bm{1} - 2\bm{x}^{0})^\T \bm x$ takes on at most $M+1$ distinct values on vertices of $P$. Thus, by Corollary \ref{cor:lengthbound}, the length of the path used by the Slim Shadow rule starting at $\bm{x}^{0}$ is at most $M$.
\end{proof}

Note that this bound is tight when the number of nonzero coordinates $k$ is less than $n/2$, since the monotone diameter of the hyper-simplex $\Delta(n,k)$ in that case is easily verified to be $k$. For $0/1$-LPs containing $\bm{0}$ as an extreme point solution, we may improve this bound in an analogous manner. 
\begin{lem}
\label{lem:maxnumnonzero}
On any $0/1$-LP of the form (\ref{lp}) in which $\bm{0}$ is a vertex of the feasible region $P$ and in which each vertex has at most $M$ nonzero entries, the Slim Shadow rule starting at $\bm{0}$ takes at most $M$ steps.
\end{lem}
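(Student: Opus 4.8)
The plan is to mimic the proof of Lemma~\ref{lem:sparsity}, but start from the fact that $\bm{x}^0 = \bm{0}$ makes the auxiliary vector especially simple. First I would set up the notation: let the LP be $\max\set{\bm c^\T\bm x : \bm x\in P}$ with $P = \pol$ a $0/1$ polytope, take $\bm x^0 = \bm 0$ as the initial vertex, and compute $\bm v = \bm 1 - 2\bm x^0 = \bm 1$. So the Slim Shadow rule here is driven by $\bm v = \bm 1$, and the quantity controlling the path length via Corollary~\ref{cor:lengthbound} is $|\set{\bm 1^\T\bm u : \bm u\in V}| - 1$, where $V$ is the vertex set of $P$.

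Next I would observe that $\bm 1^\T\bm u$ simply counts the number of nonzero coordinates of the vertex $\bm u$ (since $\bm u\in\{0,1\}^n$). By hypothesis every vertex has at most $M$ nonzero entries, and $\bm 0\in V$ realizes the value $0$, so $\bm 1^\T\bm u$ ranges over a subset of $\set{0,1,\ldots,M}$. Hence $|\set{\bm 1^\T\bm u : \bm u\in V}| \leq M+1$, and Corollary~\ref{cor:lengthbound} gives a path of length at most $M$. I should also check the one hypothesis of Corollary~\ref{cor:lengthbound} that is not purely combinatorial: that $\bm x^0$ is a $\bm c$-maximum of the $\bm v$-minimal face $F$ of $P$. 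Since $\bm v = \bm 1 \geq \bm 0$ and $P\subseteq[0,1]^n$, the function $\bm 1^\T\bm x$ is minimized over $P$ uniquely at $\bm 0$ (any other vertex has a strictly positive coordinate), so $F = \set{\bm 0}$ and trivially $\bm x^0 = \bm 0$ is its $\bm c$-maximum; thus the Slim Shadow rule starting at $\bm 0$ indeed follows a $\bm c$-coherent $\bm v$-monotone path, and Corollary~\ref{cor:lengthbound} applies.

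There is essentially no hard part here: the statement is a near-immediate specialization of Corollary~\ref{cor:lengthbound} once one notices that with $\bm x^0 = \bm 0$ the auxiliary objective $\bm v^\T\bm x = \bm 1^\T\bm x$ is exactly the support-size function. The only point requiring a word of care is verifying the starting-vertex hypothesis of the corollary — that $\bm 0$ is the (unique) $\bm v$-minimal vertex and hence automatically the $\bm c$-maximum of the $\bm v$-minimal face — which follows from $P\subseteq[0,1]^n$. So the write-up would be three or four lines: identify $\bm v = \bm 1$, note $\bm v^\T\bm u = |\supp(\bm u)| \in \set{0,\ldots,M}$, conclude $|\set{\bm v^\T\bm u : \bm u\in V}|\leq M+1$, and invoke Corollary~\ref{cor:lengthbound}.
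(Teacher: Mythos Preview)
Your proposal is correct and follows essentially the same approach as the paper's proof: identify $\bm v = \bm 1$, observe that $\bm 1^\T\bm u$ counts the support size and hence takes at most $M+1$ values on vertices, and invoke Corollary~\ref{cor:lengthbound}. Your additional check that $\bm 0$ is the unique $\bm v$-minimizer (so the starting hypothesis of the corollary is met) is a nice bit of care that the paper leaves implicit.
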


\begin{proof}
Let the LP be $\max \set{\bm c^\T \bm x : \bm x \in P}$ where $P = \pol$ is a $0/1$ polytope. Since we are starting at $\bm{0}$, we have that $\bm v = \bm{1}$. By assumption, $\bm{1}^{\T}\bm x$ takes on at most $M+1$ distinct values on vertices of $P$. Hence, the Slim Shadow rule will take at most $M$ steps.
\end{proof}

\subsection{The Ordered Shadow Rule} \label{sec:ordershadow}

The bound given for the Slim Shadow rule is given in terms of the number of variables $n$. However, 
from \cite{Nad89}, we know that the diameter of a 0/1 polytope is at most its dimension $d$. To attain 
a bound of at most $d$ steps, we introduce the \emph{Ordered Shadow rule}.

\begin{defn}
\label{def:ordshad}
Given a $0/1$-LP of the form ~(\ref{lp}) with feasible region $P$, let $\bm{x}^{0}$ be an initial extreme point on $P$. Let $c^{\ast} = ||\bm{c}||_{1} + 2$.  Define $\bm{v} \in \mathbb{R}^{n}$ by $\bm{v}(k) = (-1)^{\bm{x}^{0}(k)}(c^{\ast})^{k}$. Then the \emph{Ordered Shadow rule} is given by the Shadow rule (as described in Section \ref{sec:shadow_general}) with auxiliary vector $\bm{v}$. Explicitly, we have
\[\bm{x}^{i+1} = \argmax_{\bm{u} \in N_{\bm{v}}(\bm{x}^{i})} \frac{\bm{c}^{\T}(\bm{u} - \bm{x}^{i})}{\bm{v}^{\T}(\bm{u} - \bm{x}^{i})}.\]
\end{defn}

Like the Slim Shadow rule, the Ordered Shadow rule follows a shadow path found by choosing an auxiliary vector carefully.  Note also that Definition \ref{def:ordshad} implicitly assumes an ordering on the coordinates. Namely, we chose $|\bm{v}(k)| = (c^{\ast})^{k}$ but could also choose $|\bm{v}(k)| = (c^{\ast})^{\sigma(k)}$ for any permutation $\sigma: [n] \to [n]$, giving a different order of the variables. Different orderings of variables can yield paths of different lengths. However, for any choice of $\sigma$, $\bm{v}$ is minimized uniquely at $\bm{x}^{0}$ and the length of the path will still be at most $d$ by the same proof we provide here. To prove that the path is always short, we may no longer directly apply Corollary \ref{cor:lengthbound}. Instead, we show that the path followed satisfies another equivalent characterization for which the length of the path is easier to analyze. We first bound the length of a path satisfying this alternative characterization.

\begin{lem}
\label{lem:altchar}
Consider a $0/1$-LP of the form~(\ref{lp}) with $d$-dimensional feasible region $P$.
Let $\bm{x}^{0}$ an initial vertex in $P$. We build a monotone path $\Gamma$ on $P$ as follows:
Define $f: \R^{n} \to \{0, 1, \dots, n\}$ by $f(\bm{u}) = \max(\{k: \bm{u}(k) - \bm{x}^{0}(k) \neq 0\} \cup \{0\}).$ Given the $i$-th vertex $\bm{x}^i$ of the path, let $N_{\min}(\bm{x}^{i})$ be the set of $f$-minimal $\bm{c}$-improving neighbors of $\bm{x}^{i}$. Select the next extreme point of the path $\bm{x}^{i+1}$ as the $\bm{c}$-maximum of $N_{\min}(\bm{x}^{i})$. The length of the path $\Gamma$ constructed in this way is at most $d$.
\end{lem}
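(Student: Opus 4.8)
The plan is to argue that the path $\Gamma$ visits each coordinate value of $f$ at most once, and that the relevant range of $f$ on the path has size at most $d$. The key structural claim is that $f$ is \emph{strictly increasing} along $\Gamma$: if $\bm x^{i+1}$ is chosen as an $f$-minimal $\bm c$-improving neighbor of $\bm x^i$, then $f(\bm x^{i+1}) > f(\bm x^i)$. To see why one should expect this, note that moving from $\bm x^i$ to $\bm x^{i+1}$ changes some coordinate, and I want to show that among the $\bm c$-improving neighbors of $\bm x^i$ there is always one that differs from $\bm x^0$ only in coordinates with index strictly larger than $f(\bm x^i)$. This is the heart of the argument and the step I expect to be the main obstacle.

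The way I would establish the strict-increase claim is to relate $f$ to the auxiliary vector $\bm v$ from Definition~\ref{def:ordshad} and invoke the Shadow rule machinery, or to argue directly on the $0/1$ cube. Concretely, I would first observe that $f(\bm u) \le f(\bm x^i)$ for a neighbor $\bm u$ means $\bm u$ and $\bm x^i$ agree with $\bm x^0$ on all coordinates above $f(\bm x^i)$ — in particular $\bm u$, $\bm x^i$, and $\bm x^0$ all lie in the same face of the cube obtained by fixing those high coordinates. I would then show that $\bm x^i$ is \emph{not} $\bm c$-maximal on that face implies it has a $\bm c$-improving neighbor which leaves that face (i.e., changes a high coordinate), hence has strictly larger $f$. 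The cleanest route is probably to use the interpretation of the rule as a Shadow rule: the choice $\bm v(k) = (-1)^{\bm x^0(k)}(c^\ast)^k$ with $c^\ast = \|\bm c\|_1 + 2$ makes $\bm v^\T \bm x$ a lexicographic-type ordering dominated by the highest-index differing coordinate, so that $\bm v$-monotonicity forces $f$ to increase; I would need to check that the $f$-minimal $\bm c$-improving neighbor selected here coincides with (or is compatible with) the Shadow-rule selection, and that $\bm x^0$ is the unique $\bm v$-minimizer. This equivalence of the two characterizations is exactly what the sentence preceding the lemma promises, so I would spell it out carefully here.

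Granting that $f$ strictly increases along $\Gamma = [\bm x^0, \bm x^1, \dots, \bm x^k]$, the vertices $\bm x^1, \dots, \bm x^k$ have distinct, strictly positive values $f(\bm x^1) < f(\bm x^2) < \dots < f(\bm x^k) \le n$. This alone only gives a bound of $n$, so the final step is to improve $n$ to $d$. For this I would use that $P$ has dimension $d$: the affine hull of $P$ is cut out by $n - d$ independent equations among the coordinates, so at most $d$ of the coordinates are "free" in the sense that the value $f(\bm u) = k$ can actually occur for some vertex $\bm u$ of $P$ with $\bm u(k) \ne \bm x^0(k)$. More precisely, I would argue that if coordinate $k$ is determined (as an affine function) by the lower-indexed coordinates on $\mathrm{aff}(P)$, then no edge of $P$ has $k$ as its highest differing coordinate, so $k$ never appears as a value of $f$ along any monotone path; the number of "non-determined" indices is at most $d$. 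Combining, the set of values $\{f(\bm x^1), \dots, f(\bm x^k)\}$ has size at most $d$, hence $k \le d$, which is the desired bound.

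Remark on the main difficulty: the subtle point is the strict-increase claim in the presence of degeneracy — I must rule out the possibility that every $\bm c$-improving neighbor of $\bm x^i$ stays inside the cube-face fixing the high coordinates while $\bm x^i$ is still not optimal. The resolution is that on such a face $\bm x^i$ fails to be $\bm c$-maximal, and by the properties of coherent monotone paths (Lemma~\ref{lem:submax}) combined with the lexicographic dominance of $\bm v$, there is an improving neighbor realizing a strictly larger $f$-value; nailing down this compatibility between the combinatorial selection via $f$ and the geometric Shadow-rule selection via $\bm v$ is where the real work lies.
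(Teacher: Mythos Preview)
Your plan has two genuine gaps.

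\textbf{The step from $n$ to $d$ is incorrect as written.} Your claim that ``if coordinate $k$ is determined by the lower-indexed coordinates on $\mathrm{aff}(P)$, then $k$ never appears as a value of $f$'' is false. Take $P$ to be the segment from $(0,0,0)$ to $(1,0,1)$ in $\R^3$, with $\bm x^0=(0,0,0)$. Here $\dim P = 1$ and on $\mathrm{aff}(P)$ the third coordinate equals the first, so coordinate $3$ is determined by lower-indexed coordinates. Yet the unique $\bm c$-improving neighbor $(1,0,1)$ has $f=3$. So counting ``free'' coordinates does not bound the set of $f$-values that occur. What \emph{does} work is to associate to each $\bm x^i$ the face $F_{f(\bm x^i)} := \{\bm x\in P : \bm x(a)=\bm x^0(a)\text{ for all }a> f(\bm x^i)\}$ (this is a face of $P$ because it is the intersection of $P$ with a face of the cube), and to observe that strict increase of $f$ forces a strictly increasing chain of faces $F_{f(\bm x^0)}\subsetneq F_{f(\bm x^1)}\subsetneq\cdots$, whose length is bounded by $d$. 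This is the mechanism the paper uses.

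\textbf{The strict-increase claim needs a stronger inductive invariant, and your proposed route is circular.} You propose to deduce $f(\bm x^{i+1})>f(\bm x^i)$ by identifying the $f$-rule with the Shadow rule for $\bm v(k)=(-1)^{\bm x^0(k)}(c^\ast)^k$ and invoking $\bm v$-monotonicity. But the equivalence between the two rules is exactly Theorem~\ref{thm:ordered}, which is proved \emph{after} and \emph{using} Lemma~\ref{lem:altchar}; importing it here would be circular. The paper instead proves directly, by induction, the stronger statement that $\bm x^i$ is a $\bm c$-maximum of the face $F_{f(\bm x^i)}$. Strict increase of $f$ is then immediate (any $\bm c$-improving neighbor must leave $F_{f(\bm x^i)}$), and the nontrivial part of the induction is to show that $\bm x^{i+1}$ is a $\bm c$-maximum of $F_{f(\bm x^{i+1})}$. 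For this the paper applies the Shadow rule locally on the face $F_{f(\bm x^{i+1})}$ with auxiliary vector $\pm\bm e_{f(\bm x^{i+1})}$: since $\bm e_k^\T\bm x$ takes only the values $0$ and $1$ on a $0/1$ polytope, Corollary~\ref{cor:lengthbound} gives a path of length at most one on that face, and one checks that the rule's choice $\bm x^{i+1}$ realizes that one step. This local, one-coordinate Shadow argument is the idea missing from your outline.
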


\begin{proof}
 Let the LP be $\max \set{\bm c^\T \bm x : \bm x \in P}$ where $P = \pol$ is a $0/1$ polytope, and let $\Gamma = [\bm{x}^{0}, \bm{x}^{1}, \dots]$ be the path followed by the rule described in the statement. Define $H_{k} = \{\bm{x} \in \R^{d}: \bm{x}(a) = \bm{x}^{0}(a)\text{ for all } a \geq k+1\}$ to be the plane given by fixing the last $n-k$ coordinates of a vector to agree with $\bm{x}^{0}$. Let $F_{k} = H_{k} \cap P$. Note that $[0,1]^{n} \cap H_{k}$ is a face of the $n$-cube, so since $P$ is a $0/1$ polytope, $F_{k}$ is also a face of $P$ for all $0 \leq k \leq n$. We equivalently have that $F_{k} = \{\bm{x} \in P: f(\bm{x}) \leq k\}$. 

Observe that $F_{0} = \bm{x}^{0}$, $F_{n} = P$, and $F_{k} \subseteq F_{k+1}$ for all $k \in [n]$. Consider $\bm{x}^{i}$, the $i$th vertex in the path. Suppose that, for each choice of $i \geq 0$, $\bm{x}^{i}$ and $\bm{x}^{i+1}$ are $\bm{c}$-maxima of some $F_{\alpha(i)}$ and $F_{\alpha(i+1)}$ respectively for some function $\alpha: \mathbb{N} \to [n]$. Then each vertex in the path is associated to a face. Since the path is $\bm{c}$-monotone, $F_{\alpha(i)}$ is a proper face of $F_{\alpha(i+1)}$. Therefore, because the dimension of each associated face must strictly increase, the length of the path is at most $d$. Thus, to finish the proof, it suffices to show that $\bm{x}^{i}$ and $\bm{x}^{i+1}$ are $\bm{c}$-maxima of $F_{f(\bm{x}^{i+1}) - 1}$ and $F_{f(\bm{x}^{i+1})}$ respectively. 

Suppose for the sake of contradiction that $\bm{x}^{i}$ is not a $\bm{c}$-maximum on the face $F_{f(\bm{x}^{i+1})-1}$. Then there exists a vertex $\bm{u}$ of $F_{f(\bm{x}^{i+1})-1}$ adjacent to $\bm{x}^{i}$ with $\mbc \bm{u} > \mbc \bm{x}^{i}$. Thus, we must have $f(\bm{u}) \leq f(\bm{x}^{i+1})-1 < f(\bm{x}^{i+1})$.  However, by the definition of $f$, $f(\bm{x}^{i+1})$ is minimal among all $\bm{c}$-improving neighbors of $\bm{x}^{i}$, a contradiction. Hence, $\bm{x}^{i}$ is a $\bm{c}$-maximum of $F_{f(\bm{x}^{i+1})-1}$.

 Consider the LP $\max\{\bm{c}^{\T}\bm{x}: \bm{x} \in F_{f(\bm{x}^{i+1})}\}$. It remains to show that $\bm{x}^{i+1}$ is an optimal solution to this LP. We may take $\bm{x}^{i}$ to be our initial point and argue that from that starting point, this LP may be solved by following a path of one step by the Shadow rule for auxiliary vector $\bm{e}_{f(\bm{x}^{i+1})}$ when $\bm{x}^{0}(f(\bm{x}^{i+1})) = 0$ or $-\bm{e}_{f(\bm{x}^{i+1})}$ when $\bm{x}^{0}(f(\bm{x}^{i+1})) = 1$. We then argue that $\bm{x}^{i+1}$ is a valid choice for this Shadow rule.
 
 In what remains, we shall assume that $\bm{x}^{0}(f(\bm{x}^{i+1})) = 0$, but a completely analogous argument follows when $\bm{x}^{0}(f(\bm{x}^{i+1}))=1$. Since $\bm{x}^{0}(f(\bm{x}^{i+1}))= 0$,  $F_{f(\bm{x}^{i+1})-1}$ is the $\bm{e}_{f(\bm{x}^{i+1})}$-minimal face of $F_{f(\bm{x}^{i+1})}$. We have already shown that $\bm{x}^{i}$ is a $\bm{c}$-maximum of that face. Thus, $\bm{x}^{i}$ is a valid starting point for the Shadow rule with auxiliary vector $\bm{e}_{f(\bm{x}^{i+1})
}$ for that LP.
 
 Note that $P$ is $0/1$, so $\bm{e}_{f(\bm{x}^{i+1})}^{\T} \bm{x} = \bm{x}(f(\bm{x}^{i+1}))$ takes on at most two values on vertices of $P$: $0$ or $1$. In particular, $e_{f(\bm{x}^{i+1})}^{\T}\bm{x}$ takes on at most two values on the vertices of the face $F_{f(\bm{x}^{i+1})}$. Hence, by Corollary \ref{cor:lengthbound}, a $\bm{c}$-coherent $\bm{e}_{f(\bm{x}^{i+1})}$-monotone path from a $\bm{c}$-maximum of the $\bm{e}_{f(\bm{x}^{i+1})}$-minimal face of $F_{f(\bm{x}^{i+1})}$ must be of length at most $2-1 = 1$. Hence, a $\bm{c}$-coherent $\bm{e}_{f(\bm{x}^{i+1})}$-monotone path on $F_{f(\bm{x}^{i+1})}$ starting at $\bm{x}^{i}$ must be of length at most one. 
 
 Thus, $\bm{x}^{i}$ is either equal to or adjacent to a $\bm{c}$-maximum on $F_{f(\bm{x}^{i+1})}$. Note that $\bm{x}^{i}, \bm{x}^{i+1} \in F_{f(\bm{x}^{i+1})}$, and $\mbc \bm{x}^{i+1} > \mbc \bm{x}^{i}$. Hence, $\bm{x}^{i}$ is not a $\bm{c}$-maximum on $F_{f(\bm{x}^{i+1})}$. It follows that $\bm{x}^{i}$ is adjacent to such a $\bm{c}$-maximum, and in particular, any vertex chosen by the Shadow rule on $F_{f(\bm{x}^{i+1})}$ with auxiliary vector $\bm{e}_{f(\bm{x}^{i+1})}$ must be $\bm{c}$-maximal.
 
Equivalently, a vertex $\bm{y}^{\ast}$ is $\bm{c}$-maximal on $F_{f(\bm{x}^{i+1})}$ whenever
\[\bm{y}^{\ast} \in \argmax_{\bm{u} \in N_{i+1}} \frac{\mbc(\bm{u}-\bm{x}^{i})}{\bm{e}_{f(\bm{x}^{i+1})}^{\T}(\bm{u}-\bm{x}^{i})},\]
where $N_{i+1}$ is the set of $\bm{e}_{f(\bm{x}^{i+1})}$-improving neighbors $\bm{u}$ of $\bm{x}^{i}$ in $F_{f(\bm{x}^{i+1})}$. Note that any $\bm{e}_{f(\bm{x}^{i+1})}$-improving neighbor $\bm{u}$ of $\bm{x}^{i}$ must satisfy $\bm{e}_{f(\bm{x}^{i+1})}^{\T}(\bm{u}-\bm{x}^{i}) = 1-0 =1$, since $P$ is $0/1$. Furthermore, observe that $\bm{x}^{i+1}$ must be $f$-minimal among all $\bm{c}$-improving neighbors of $\bm{x}^{i}$, since $\bm{x}^{i}$ is the $\bm{c}$-maximum of $F_{f(\bm{x}^{i+1})-1}$. Thus, because $F_{f(\bm{x}^{i+1})} = \{\bm{x}: f(\bm{x}) \leq f(\bm{x}^{i+1})\}$, all $\bm{c}$-improving neighbors of $\bm{x}^{i}$ in $F_{f(\bm{x}^{i+1})}$ must be $f$-minimal. Hence, the set of $\bm{c}$-improving neighbors of $\bm{x}^{i}$ in $F_{f(\bm{x}^{i+1})}$ is exactly $N_{\min}(\bm{x}^{i})$. It follows that
\[ \bm{x}^{i+1} \in \argmax_{\bm{u} \in N_{\min}(\bm{x}^{i})}(\mbc \bm{u}) = \argmax_{\bm{u} = N_{\min}(\bm{x}^{i})}(\mbc(\bm{u} - \bm{x}^{i})) = \argmax_{\bm{u} \in N_{i+1}} \frac{\mbc(\bm{u}-\bm{x}^{i})}{\bm{e}_{i}^{\T}(\bm{u}-\bm{x}^{i})}.\]
Therefore, $\bm{x}^{i+1}$ is a $\bm{c}$-maximum of $F_{f(\bm{x}^{i+1})}$ as desired.
\end{proof}

See Figure \ref{fig:orderedshadpath} for an example of the construction of Lemma \ref{lem:altchar}.

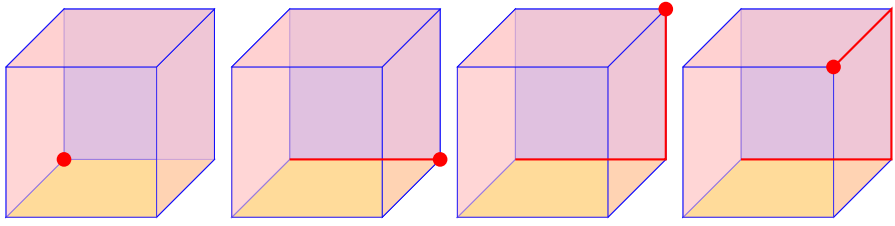
\begin{figure}[h]
    \centering
    \begin{tikzpicture}
\coordinate (O) at (0,0,0);
\coordinate (A) at (0,\Width,0);
\coordinate (B) at (0,\Width,\Height);
\coordinate (C) at (0,0,\Height);
\coordinate (D) at (\Depth,0,0);
\coordinate (E) at (\Depth,\Width,0);
\coordinate (F) at (\Depth,\Width,\Height);
\coordinate (G) at (\Depth,0,\Height);

\draw[blue,fill=yellow!80] (O) -- (C) -- (G) -- (D) -- cycle;% Bottom Face
\draw[blue,fill=blue!30] (O) -- (A) -- (E) -- (D) -- cycle;% Back Face
\draw[blue,fill=red!10] (O) -- (A) -- (B) -- (C) -- cycle;% Left Face
\draw[blue,fill=red!20,opacity=0.8] (D) -- (E) -- (F) -- (G) -- cycle;% Right Face
\draw[blue,fill=red!20,opacity=0.6] (C) -- (B) -- (F) -- (G) -- cycle;% Front Face
\draw[blue,fill=red!20,opacity=0.8] (A) -- (B) -- (F) -- (E) -- cycle;% Top Face
\draw (O) node[red, circle, fill, scale =.5] {};

\coordinate (O) at (0 + 3,0,0);
\coordinate (A) at (0 + 3,\Width,0);
\coordinate (B) at (0 + 3,\Width,\Height);
\coordinate (C) at (0 + 3,0,\Height);
\coordinate (D) at (\Depth + 3,0,0);
\coordinate (E) at (\Depth + 3,\Width,0);
\coordinate (F) at (\Depth + 3,\Width,\Height);
\coordinate (G) at (\Depth + 3,0,\Height);

\draw[blue,fill=yellow!80] (O) -- (C) -- (G) -- (D) -- cycle;% Bottom Face
\draw[blue,fill=blue!30] (O) -- (A) -- (E) -- (D) -- cycle;% Back Face
\draw[blue,fill=red!10] (O) -- (A) -- (B) -- (C) -- cycle;% Left Face
\draw[blue,fill=red!20,opacity=0.8] (D) -- (E) -- (F) -- (G) -- cycle;% Right Face
\draw[blue,fill=red!20,opacity=0.6] (C) -- (B) -- (F) -- (G) -- cycle;% Front Face
\draw[blue,fill=red!20,opacity=0.8] (A) -- (B) -- (F) -- (E) -- cycle;% Top Face
\draw[red, thick] (O) -- (D);
\draw (D) node[red, circle, fill, scale =.5] {};

\coordinate (O) at (0 + 3 + 3,0,0);
\coordinate (A) at (0 + 3 + 3,\Width,0);
\coordinate (B) at (0 + 3 + 3,\Width,\Height);
\coordinate (C) at (0 + 3 + 3,0,\Height);
\coordinate (D) at (\Depth + 3 + 3,0,0);
\coordinate (E) at (\Depth + 3 + 3,\Width,0);
\coordinate (F) at (\Depth + 3 + 3,\Width,\Height);
\coordinate (G) at (\Depth + 3 + 3,0,\Height);

\draw[blue,fill=yellow!80] (O) -- (C) -- (G) -- (D) -- cycle;% Bottom Face
\draw[blue,fill=blue!30] (O) -- (A) -- (E) -- (D) -- cycle;% Back Face
\draw[blue,fill=red!10] (O) -- (A) -- (B) -- (C) -- cycle;% Left Face
\draw[blue,fill=red!20,opacity=0.8] (D) -- (E) -- (F) -- (G) -- cycle;% Right Face
\draw[blue,fill=red!20,opacity=0.6] (C) -- (B) -- (F) -- (G) -- cycle;% Front Face
\draw[blue,fill=red!20,opacity=0.8] (A) -- (B) -- (F) -- (E) -- cycle;% Top Face
\draw[red, thick] (O) -- (D) -- (E);
\draw (E) node[red, circle, fill, scale =.5] {};

\coordinate (O) at (0 + 3 + 3 + 3,0,0);
\coordinate (A) at (0 + 3 + 3 + 3,\Width,0);
\coordinate (B) at (0 + 3 + 3 + 3,\Width,\Height);
\coordinate (C) at (0 + 3 + 3 + 3,0,\Height);
\coordinate (D) at (\Depth + 3 + 3 + 3,0,0);
\coordinate (E) at (\Depth + 3 + 3 + 3,\Width,0);
\coordinate (F) at (\Depth + 3 + 3 + 3,\Width,\Height);
\coordinate (G) at (\Depth + 3 + 3 + 3,0,\Height);

\draw[blue,fill=yellow!80] (O) -- (C) -- (G) -- (D) -- cycle;% Bottom Face
\draw[blue,fill=blue!30] (O) -- (A) -- (E) -- (D) -- cycle;% Back Face
\draw[blue,fill=red!10] (O) -- (A) -- (B) -- (C) -- cycle;% Left Face
\draw[blue,fill=red!20,opacity=0.8] (D) -- (E) -- (F) -- (G) -- cycle;% Right Face
\draw[blue,fill=red!20,opacity=0.6] (C) -- (B) -- (F) -- (G) -- cycle;% Front Face
\draw[blue,fill=red!20,opacity=0.8] (A) -- (B) -- (F) -- (E) -- cycle;% Top Face
\draw[red, thick] (O) -- (D) -- (E) -- (F);
\draw (F) node[red, circle, fill, scale =.5] {};
\end{tikzpicture}
    \caption{\small The construction of Lemma \ref{lem:altchar} yields the displayed path for maximizing $\mbc = (1,2,3)$ on the cube $[0,1]^{3}$ starting at $\bm{0}$. Observe that $\bm{0}$ is trivially the $\bm{c}$-maximum of the face in which all coordinates are fixed to be $0$. Then the path moves to $(1,0,0)$, the $\bm{c}$-maximum of the edge given by fixing the final two coordinates to equal $0$. The next step lands at $(1,1,0)$, the $\bm{c}$-maximum on the face in which the final coordinate fixed at $0$. Finally, the path ends at $(1,1,1)$, the $\bm{c}$-maximum on $[0,1]^{3}$.}
    \label{fig:orderedshadpath}
\end{figure}

We may use the bound on the path constructed in Lemma \ref{lem:altchar} to bound the length of the path followed by the Ordered Shadow rule. 

\begin{thm}\label{thm:ordered}
Consider a $0/1$-LP of the form~(\ref{lp}) with $d$-dimensional feasible region $P$. 
The Ordered Shadow rule follows a path $\Gamma$ as described in the statement of 
Lemma \ref{lem:altchar}. Hence, the number of steps taken by the Ordered Shadow rule 
to arrive at an optimal solution is at most $d$.
\end{thm}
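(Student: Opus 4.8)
The plan is to prove that the Ordered Shadow rule, run from $\bm x^0$, follows exactly one of the paths $\Gamma$ constructed in Lemma~\ref{lem:altchar}; the bound of $d$ steps is then immediate from that lemma. I would induct on the step index, maintaining the invariant that the current vertex $\bm x^i$ is a $\bm c$-maximum of the face $F_{f(\bm x^i)}$ of Lemma~\ref{lem:altchar} (where $f(\bm u)=\max(\{k:\bm u(k)\neq\bm x^0(k)\}\cup\{0\})$); the base case is trivial since $F_0=\{\bm x^0\}$. The whole argument reduces to one claim: at a non-$\bm c$-optimal $\bm x^i$, the vertex $\bm x^{i+1}$ selected by the Ordered Shadow rule --- any maximizer of $\bm c^\T(\bm u-\bm x^i)/\bm v^\T(\bm u-\bm x^i)$ over $\bm v$-improving neighbors $\bm u$ --- is a $\bm c$-maximum of $N_{\min}(\bm x^i)$. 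Granting this, the inductive step closes exactly as in the proof of Lemma~\ref{lem:altchar}: minimality of $f(\bm x^{i+1})$ among the $\bm c$-improving neighbors of $\bm x^i$ forces $\bm x^i$ to be a $\bm c$-maximum of $F_{f(\bm x^{i+1})-1}$, which is a $\pm\bm e_{f(\bm x^{i+1})}$-minimal face of $F_{f(\bm x^{i+1})}$, and Corollary~\ref{cor:lengthbound} applied to a one-step shadow on $F_{f(\bm x^{i+1})}$ identifies $\bm x^{i+1}$ as a $\bm c$-maximum of $F_{f(\bm x^{i+1})}$, propagating the invariant.

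Two quick preliminaries. If $\bm c=\bm 0$ the path is trivial, so I assume $\bm c\neq\bm 0$ and hence $c^{\ast}=\|\bm c\|_1+2\geq 3$; and since $\bm v^\T(\bm y-\bm x^0)=\sum_{k:\,\bm y(k)\neq\bm x^0(k)}(c^{\ast})^k>0$ for every vertex $\bm y\neq\bm x^0$, the point $\bm x^0$ is the unique $\bm v$-minimizer of $P$, so the Ordered Shadow rule is a bona fide Shadow rule and, by Lemma~\ref{lem:submax}, its slope-maximizer is a genuine $\bm c$-improving neighbor whenever one exists.

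The core is a base-$c^{\ast}$ digit estimate. Fix $\bm x^i$ with $j=f(\bm x^i)$; since $\bm x^i$ is a $\bm c$-maximum of $F_j$, any $\bm c$-improving neighbor $\bm u$ has $p:=f(\bm u)>j$, and a coordinatewise expansion gives $\bm v^\T(\bm u-\bm x^i)=(c^{\ast})^p+R(\bm u)$, where the coordinate-$p$ term contributes exactly $(c^{\ast})^p$ (as $\bm u$ differs from $\bm x^0$ at $p$ while $\bm x^i$ does not) and each coordinate $k<p$ contributes $(c^{\ast})^k$, $0$, or $-(c^{\ast})^k$, so that $|R(\bm u)|\leq\sum_{k<p}(c^{\ast})^k<(c^{\ast})^p/(c^{\ast}-1)$; in particular $\bm v^\T(\bm u-\bm x^i)>0$, so every $\bm c$-improving neighbor is $\bm v$-improving. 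Using $1\leq\bm c^\T(\bm u-\bm x^i)\leq\|\bm c\|_1=c^{\ast}-2$ I would then show: \textbf{(a)} if $\bm u,\bm w$ are $\bm c$-improving neighbors with $f(\bm u)<f(\bm w)$, the slope of $\bm u$ is at least $(c^{\ast}-1)/(c^{\ast})^{f(\bm u)+1}$ while that of $\bm w$ is strictly less than $(c^{\ast}-1)/(c^{\ast})^{f(\bm w)}$, so any slope-maximizer lies in $N_{\min}(\bm x^i)$; and \textbf{(b)} if $\bm u,\bm w\in N_{\min}(\bm x^i)$ with $\bm c^\T\bm u>\bm c^\T\bm w$, the slope of $\bm u$ strictly exceeds that of $\bm w$. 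For (b) the crude bound on $R$ is too weak; instead one observes that for $\bm u\in N_{\min}(\bm x^i)$ the coordinate-$k$ contribution to $R(\bm u)$ is $\geq 0$ on the coordinates where $\bm x^i$ agrees with $\bm x^0$ and $\leq 0$ on the (few) coordinates where it does not, whence $R(\bm u)\leq\sum_{k<p,\,\bm x^i(k)=\bm x^0(k)}(c^{\ast})^k$ and $R(\bm w)\geq-\sum_{k<p,\,\bm x^i(k)\neq\bm x^0(k)}(c^{\ast})^k$; clearing the positive denominators reduces (b) to $(c^{\ast})^p>\|\bm c\|_1\sum_{k<p}(c^{\ast})^k$, which holds because $\|\bm c\|_1\sum_{k<p}(c^{\ast})^k=(c^{\ast}-2)\frac{(c^{\ast})^p-c^{\ast}}{c^{\ast}-1}<(c^{\ast})^p$. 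Combining (a) and (b) with Lemma~\ref{lem:submax} shows $\bm x^{i+1}$ is a $\bm c$-maximum of $N_{\min}(\bm x^i)$, closing the induction.

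I expect part (b) to be the genuine obstacle: two members of $N_{\min}(\bm x^i)$ can have $\bm c$-values differing by only $1$, so a lossy estimate of $R$ would let the order-$(c^{\ast})^{p-1}$ terms overwhelm that difference and flip the slope order; the theorem really hinges on this sign split of the digit contributions according to whether $\bm x^i$ has already flipped a given coordinate away from $\bm x^0$. The remaining ingredients --- uniqueness of the $\bm v$-minimizer, part (a), and replaying the tail of the Lemma~\ref{lem:altchar} argument for the inductive step --- are routine.
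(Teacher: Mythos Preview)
Your proposal is correct and takes essentially the same route as the paper: both argue by induction that $\bm x^i$ is a $\bm c$-maximum of $F_{f(\bm x^i)}$ and reduce the inductive step to the three facts that every $\bm c$-improving neighbor is $\bm v$-improving, that the slope strictly prefers smaller $f$-value, and that among neighbors of equal $f$-value the slope strictly prefers the larger $\bm c$-value; the sign split you single out for (b) is precisely the paper's case analysis $\bm x^i(j)=0$ versus $\bm x^i(j)=1$ on the coefficients $\kappa_j$, leading to the identical final inequality $(c^{\ast})^p>\|\bm c\|_1\sum_{k<p}(c^{\ast})^k$. The only cosmetic difference is packaging: the paper clears denominators once into a single quantity $\kappa_j=\bm c^\T(\bm u^0-\bm x^i)(\bm u^1(j)-\bm x^i(j))-\bm c^\T(\bm u^1-\bm x^i)(\bm u^0(j)-\bm x^i(j))$ and proves $\kappa_b(c^{\ast})^b>\sum_{j<b}(-\kappa_j)(c^{\ast})^j$, whereas you bound the residuals $R(\bm u),R(\bm w)$ directly (and handle (a) by bounding the two slopes separately rather than by direct comparison).
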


\begin{proof}
Let $\Gamma = [\bm{x}^{0}, \bm{x}^{1}, \dots, \bm{x}^{k}]$ be the path followed by the Ordered Shadow rule. In particular, we have
\[\bm{x}^{i+1} \in \argmax_{\bm{u} \in N_{\bm{v}}(\bm{x}^{i})} \frac{\bm{c}^{\T}(\bm{u} - \bm{x}^{i})}{\bm{v}^{\T}(\bm{u}-\bm{x}^{i})}.\]
Our goal is to show that $\bm{x}^{i+1}$ must then also be a $\bm{c}$-maximal element of $N_{\min}(\bm{x}^{i})$. To prove this, we will show the following three claims hold. 

\textbf{Claim 1:} Let $\bm{u}$ be a $\bm{c}$-improving neighbor of $\bm{x}^{i}$. Then $\bm{u}$ is also $\bm{v}$-improving.

\textbf{Claim 2:} Let $\bm{u}^{0}$ and $\bm{u}^{1}$ be neighbors of $\bm{x}^{i}$ that are both $\bm{c}$-improving and $\bm{v}$-improving. Suppose that $b= f(\bm{u}^{1}) > f(\bm{u}^{0}) = a$. Then we have
\[\frac{\bm{c}^{\T}(\bm{u}^{0} - \bm{x}^{i})}{\bm{v}^{\T}(\bm{u}^{0}- \bm{x}^{i})} > \frac{\bm{c}^{\T}(\bm{u}^{1} - \bm{x}^{i})}{\bm{v}^{\T}(\bm{u}^{1}-\bm{x}^{i})}.\]

\textbf{Claim 3}: Let $\bm{u}^{0}$ and $\bm{u}^{1}$ be neighbors of $\bm{x}^{i}$ that are both $\bm{c}$-improving and $\bm{v}$-improving. Suppose that $b = f(\bm{u}^{1}) = f(\bm{u}^{0})$ and $\bm{c}^{\T}(\bm{u}^{0}) > \bm{c}^{\T}(\bm{u}^{1})$. Then we have
\[\frac{\bm{c}^{\T}(\bm{u}^{0} - \bm{x}^{i})}{\bm{v}^{\T}(\bm{u}^{0}- \bm{x}^{i})} > \frac{\bm{c}^{\T}(\bm{u}^{1} - \bm{x}^{i})}{\bm{v}^{\T}(\bm{u}^{1}-\bm{x}^{i})}.\]

Suppose that Claims $1$-$3$ are true. By Claim 1, all $\bm{c}$-improving neighbors of $\bm{x}^{i}$ are $\bm{v}$-improving, so the results of Claims $2$ and $3$ hold under the weaker assumption that $\bm{u}^{0}$ and $\bm{u}^{1}$ are any $\bm{c}$-improving neighbors. Claim $2$ shows us that when $f(\bm{u}^{1}) > f(\bm{u}^{0})$, $\bm{u}^{0}$ would be chosen over the $\bm{u}^{1}$ by the Ordered Shadow rule. Thus, $\bm{x}^{i+1}$ must be $f$-minimal among all neighbors that are $\bm{c}$-improving. That is, $\bm{x}^{i+1} \in N_{\min}(\bm{x}^{i})$. Similarly, Claim $3$ shows that the Ordered Shadow rule will always choose a neighbor with larger $\bm{c}$-value among two $f$-minimal.  That is, $\bm{x}^{i+1}$ is a $\bm{c}$-maximal element of $N_{\min}(\bm{x}^{i})$, which yields the result. Therefore, to prove the theorem, it suffices to prove Claims $1$-$3$. 

Note that each claim assumes that we start at a point $\bm{x}^{i}$ in the path. By induction, we may assume that the path up to $\bm{x}^{i}$ is of the desired type with base case satisfied for $i = 0$ by hypothesis. Equivalently, by the proof of Lemma \ref{lem:altchar}, we are able to assume that $\bm{x}^{i}$ is the $\bm{c}$-maximum of the face $F_{f(\bm{x}^{i})} = \{\bm{x} : f(\bm{x}) \leq f(\bm{x}^{i})\}$. As a result, we can apply a key assumption that all $\bm{c}$-improving neighbors of $\bm{x}^{i}$ have larger $f$-value.

We may also assume without loss of generality that $\bm{x}^{0} = \bm{0}$, which may be accomplished by a change of coordinates. In that case, $\bm{v} = (c^{*}, (c^{*})^{2}, \dots, (c^{*})^{n})$, and the following final claim will simplify our arguments for the proofs of Claims $1$-$3$.

\textbf{Claim 4:} Let $\bm{u}^{0}$ and $\bm{u}^{1}$ be neighbors of $\bm{x}^{i}$ that are both $\bm{c}$-improving and $\bm{v}$-improving. Let $b = \text{max}\{f(\bm{u}^{0}), f(\bm{u}^{1})\}$, and let 
\[\kappa_{j} = \bm{c}^{\T}(\bm{u}^{0}-\bm{x}^{i})(\bm{u}^{1}(j) - \bm{x}^{i}(j))-\bm{c}^{\T}(\bm{u}^{1}-\bm{x}^{i})(\bm{u}^{0}(j)-\bm{x}^{i}(j)).\] Then we have $\frac{\bm{c}^{\T}(\bm{u}^{0} - \bm{x}^{i})}{\bm{v}^{\T}(\bm{u}^{0}- \bm{x}^{i})} > \frac{\bm{c}^{\T}(\bm{u}^{1} - \bm{x}^{i})}{\bm{v}^{\T}(\bm{u}^{1}-\bm{x}^{i})}$ if and only if 
\[\kappa_{b}(c^{*})^{b} > \sum_{j=1}^{b-1} -\kappa_{j}(c^{*})^{j}.\]

\textit{Proof of Claim 4:} Since $\bm{u}^{0}$ and $\bm{u}^{1}$ are both $\bm{c}-$improving and $\bm{v}-$improving neighbors, the numerators and denominators of each term of the first inequality are all positive. By rearranging, we arrive at the following equivalent inequality:
\[\bm{c}^{\T}(\bm{u}^{0}-\bm{x}^{i})\bm{v}^{\T}(\bm{u}^{1}-\bm{x}^{i}) > \bm{c}^{\T}(\bm{u}^{1}-\bm{x}^{i}) \bm{v}^{\T}(\bm{u}^{0}-\bm{x}^{i}).\]
Note that $\bm{v} = (c^{*}, (c^{*})^{2}, \dots, (c^{*})^{n})$, so by evaluating $\bm{v}^{\T}(\bm{u}^{0}-\bm{x}^{i})$ and $\bm{v}^{\T}(\bm{u}^{1}-\bm{x}^{i})$ we find the inequality is equivalent to the following:
\[\bm{c}^{\T}(\bm{u}^{0}-\bm{x}^{i}) \sum_{j=1}^{n} (c^{*})^{j}(\bm{u}^{1}(j) - \bm{x}^{i}(j)) > \bm{c}^{\T}(\bm{u}^{1}-\bm{x}^{i}) \sum_{j=1}^{n} (c^{*})^{j}(\bm{u}^{0}(j) - \bm{x}^{i}(j)).\]

In what remains, it is useful to view the left and right hand sides of the equations as polynomials in $c^{\ast}$. To simplify this expression further, we will make use of the definition of $f$ from Lemma \ref{lem:altchar}. Recall that, by induction, we may assume that all neighbors of $\bm{x}^{i}$ are $f-$improving. Under this assumption, we claim that $f(\bm{u}^{1})$ and $f(\bm{u}^{0})$ are exactly the indices of the highest powers of $c^{*}$ in the left and right polynomials respectively. To see this, recall that $f(\bm{u}^{i})$ is the highest index $j$ for which $\bm{u}^{i}(j) \neq \bm{x}^{0}(j) =0$. 

By our inductive hypothesis, $f(\bm{u}^{0}) > f(\bm{x}^{i})$. Equivalently, the highest index $j$ for which $\bm{u}^{0}(j) \neq 0$ is greater than the highest index $k$ for which $\bm{x}^{i}(k) \neq 0$. Thus, $\bm{u}^{0}(j) = \bm{x}^{0}(j) = 0$ for all $j > f(\bm{u}^{0})$ by our definition of $f$. Similarly $\bm{u}^{0}(f(\bm{u}^{0})) = 1$ by the definition of $f$. At the same time $\bm{x}^{i}(f(\bm{u}^{0})) = 0$, since $f(\bm{u}^{0}) > f(\bm{x}^{i})$. Thus, we have that $\bm{u}^{0}(j) - \bm{x}^{i}(j) = 0 - 0 = 0$ for $j > f(\bm{u}^{0})$, and $\bm{u}^{0}(f(\bm{u}^{0})) - \bm{x}^{i}(f(\bm{u}^{0})) = 1 - 0 = 1$. Hence, $f(\bm{u}^{0})$ is the highest nonzero coefficient of $(c^{\ast})^{j}$ for the right polynomial. The same exact argument follows for $\bm{u}^{1}$, which establishes that it suffices to understand when the following inequality holds:
\[\bm{c}^{\T}(\bm{u}^{0}-\bm{x}^{i}) \sum_{j=1}^{f(\bm{u}^{1})} (c^{*})^{j}(\bm{u}^{1}(j) - \bm{x}^{i}(j)) > \bm{c}^{\T}(\bm{u}^{1}-\bm{x}^{i}) \sum_{j=1}^{f(\bm{u}^{0})} (c^{*})^{j}(\bm{u}^{0}(j) - \bm{x}^{i}(j)).\]
Recall that $b = \max \{f(\bm{u}^{0}), f(\bm{u}^{1})\}$. Then we may reduce the expression to understanding when
\[ \bm{c}^{\T}(\bm{u}^{0}-\bm{x}^{i})\sum_{j=1}^{b} (c^{*})^{j}(\bm{u}^{1}(j) - \bm{x}^{i}(j)) > \bm{c}^{\T}(\bm{u}^{1}-\bm{x}^{i}) \sum_{j=1}^{b} (c^{*})^{j}(\bm{u}^{0}(j) - \bm{x}^{i}(j)),\]
which we will do via order of magnitude estimates. To do these estimates, rearrange the inequality so that every term is on one side. Then we find an equation of the form 
\[\sum_{j=1}^{b} \kappa_{j} (c^{\ast})^{j} > 0, \]
where we define
\[\kappa_{j} = \bm{c}^{\T}(\bm{u}^{0}-\bm{x}^{i})(\bm{u}^{1}(j) - \bm{x}^{i}(j))-\bm{c}^{\T}(\bm{u}^{1}-\bm{x}^{i})(\bm{u}^{0}(j)-\bm{x}^{i}(j)).\]
Note that $\kappa_{j}$ is exactly the difference of terms multiplied by $(c^{\ast})^{j}$ after moving to one side. By moving all the smaller degree terms back to the other side, we finally arrive at the desired inequality:
\[\kappa_{b}(c^{*})^{b} > \sum_{j=1}^{b-1} -\kappa_{j}(c^{*})^{j}.\]

\textit{Proof of Claim 1:}
Recall that, by induction, we may assume that all $\bm{c}$-improving neighbors of $\bm{x}^{i}$ are also $f$-improving. Thus, it suffices to show that $f$-improving neighbors are $\bm{v}$-improving. Suppose that $f(\bm{u}) > f(\bm{x}^{i})$ for some $\bm{c}$-improving neighbor $\bm{u}$ of $\bm{x}^{i}$. Then $\bm{u}(f(\bm{u})) = 1$ and $\bm{x}^{i}(f(\bm{u})) = 0$, so 
\[\bm{v}^{\T}(\bm{u}-\bm{x}^{i}) = (c^{\ast})^{f(\bm{u})} + \sum_{j=1}^{f(\bm{u})-1} (\bm{u}(j)-\bm{x}^{i}(j))(c^{\ast})^{j} 
    \geq (c^{\ast})^{f(\bm{u})} - \sum_{j=1}^{f(\bm{u})-1} (c^{\ast})^{j}.\]
Since $c^{\ast} > 2$, $(c^{\ast})^{f(\bm{u})} > \sum_{j=1}^{f(\bm{u})-1} (c^{\ast})^{j}$, so 
\[\bm{v}^{\T}(\bm{u}-\bm{x}^{i}) \geq (c^{\ast})^{f(\bm{u})} - \sum_{j=1}^{f(\bm{u})-1} (c^{\ast})^{j} > 0.\] Hence, $\bm{u}$ is also $\bm{v}$-improving, which completes the proof of Claim $1$.

\textit{Proof of Claim 2:}
By the result of Claim $4$, it suffices to bound $\kappa_{b}$ from below and $|\kappa_{j}|$ from above to achieve our desired inequality. We already showed that $\bm{u}^{1}(f(\bm{u}^{1})) = 1$ and $\bm{x}^{1}(f(\bm{u}^{1})) = 0$. Furthermore, $\bm{u}^{0}(f(\bm{u}^{1})) = 0$ as well by our assumption that $f(\bm{u}^{1}) > f(\bm{u}^{0})$ and by our definition of $f$. Since $b = f(\bm{u}^{1})$, we have that
\begin{align*}
    \kappa_{b} &= \bm{c}^{\T}(\bm{u}^{0}-\bm{x}^{i})(\bm{u}^{1}(b) - \bm{x}^{i}(b))-\bm{c}^{\T}(\bm{u}^{1}-\bm{x}^{i})(\bm{u}^{0}(b)-\bm{x}^{i}(b)) \\
    &= \bm{c}^{\T}(\bm{u}^{0}-\bm{x}^{i})(1-0) - \bm{c}^{\T}(\bm{u}^{1}-\bm{x}^{i})(0-0) \\
    &= \bm{c}^{\T}(\bm{u}^{0} -\bm{x}^{i}).  
\end{align*}
Since $\bm{u}^{0}$ is $\bm{c}$-improving, $\kappa_{b} = \bm{c}^{\T}(\bm{u}^{0} - \bm{x}^{i}) > 0$. Furthermore, $\bm{c}, \bm{u}^{0},$ and $\bm{x}^{i}$ are all integer vectors, so $\kappa_{b} = \bm{c}^{\T}(\bm{u}^{0} - \bm{x}^{i}) \geq 1$. Hence, $\kappa_{b}(c^{\ast})^{b} \geq (c^{\ast})^{b}$.

For the other side of the inequality, we need to bound the sizes of lower order coefficients $|\kappa_{j}|$ for $j \leq b-1$. To do this, we need to split into cases. Suppose first that $\bm{x}^{i}(j) = 0$. Then
\begin{align*}
    \kappa_{j} &= \bm{c}^{\T}(\bm{u}^{0}-\bm{x}^{i})(\bm{u}^{1}(j) - \bm{x}^{i}(j))-\bm{c}^{\T}(\bm{u}^{1}-\bm{x}^{i})(\bm{u}^{0}(j)-\bm{x}^{i}(j)) \\
    &= \bm{c}^{\T}(\bm{u}^{0}-\bm{x}^{i})\bm{u}^{1}(j) - \bm{c}^{\T}(\bm{u}^{1}-\bm{x}^{i})\bm{u}^{0}(j).  \end{align*}

Since $\bm{u}^{0}(j), \bm{u}^{1}(j) \in \{0,1\}$, $-\bm{c}^{\T}(\bm{u}^{1}-\bm{x}^{i}) \leq \kappa_{j} \leq \bm{c}^{\T}(\bm{u}^{0}-\bm{x}^{i})$. If $\bm{x}^{i}(j) = 1$, we find by similar reasoning that $-\bm{c}^{\T}(\bm{u}^{0}-\bm{x}^{i}) \leq \kappa_{j} \leq \bm{c}^{\T}(\bm{u}^{1}-\bm{x}^{i})$. Hence, we always have that $|\kappa_{j}| \leq \text{max}\{\bm{c}^{\T}(\bm{u}^{0}-\bm{x}^{i}), \bm{c}^{\T}(\bm{u}^{1}-\bm{x}^{i})\}$. Because $\bm{u}^{0}, \bm{u}^{1}, $and $\bm{x}^{i}$ are vertices of $P$, they are in $\{0,1\}^{n}$ meaning that $\bm{u}^{0}-\bm{x}^{i}, \bm{u}^{1} - \bm{x}^{i} \in \{-1,0,1\}^{n}$. It follows that
\[|\kappa_{j}| \leq \text{max}(\bm{c}^{\T}(\bm{u}^{0}-\bm{x}^{i}), \bm{c}^{\T}(\bm{u}^{1}-\bm{x}^{i})) \leq \max_{\bm{y} \in \{-1,0,1\}^{n}} \bm{c}^{\T}\bm{y} = ||\bm{c}||_{1}.\]
Thus, we have that
\[\sum_{j=1}^{b-1} -\kappa_{i} (c^{*})^{j} \leq ||\bm{c}||_{1} \sum_{j=1}^{b-1} (c^{\ast})^{j}.\]
To finish the proof of Claim $2$, it suffices to show that $||\bm{c}||_{1} \sum_{j=1}^{b-1} (c^{\ast})^{j} < (c^{\ast})^{b}$. From a typical geometric series estimate and our choice of $c^{\ast}$, 
\[||\bm{c}||_{1} \sum_{j=1}^{b-1} (c^{\ast})^{j} = ||\bm{c}||_{1}\frac{(c^{\ast})^{b} - c^{\ast}}{c^{\ast} - 1} = \frac{||\bm{c}||_{1}}{c^{\ast} - 1}((c^{\ast})^{b} - c^{\ast}) < \frac{||\bm{c}||_{1}}{||\bm{c}||_{1}}((c^{\ast})^{b} -c^{\ast}) < (c^{\ast})^{b}.\]
Thus, Claim $2$ is true.

\textit{Proof of Claim 3:}
We again use the equivalent characterization shown in Claim $4$. Note that we have the same upper bounds for $\kappa_{i}$ as from the proof of Claim $2$ for all $i \leq b-1$ meaning that
\[\sum_{j=1}^{b-1} -\kappa_{j}(c^{*})^{j} \leq ||\bm{c}||_{1} \sum_{j=1}^{b-1} (c^{\ast})^{j}.\]
However, for $\kappa_{b}$, the situation is different. Namely, we now have $\bm{u}^{0}(b) = \bm{u}^{1}(b) = 1$, while $\bm{x}^{i}(b) = 0$. Thus, 
\begin{align*}
    \kappa_{b} &= \bm{c}^{\T}(\bm{u}^{0} - \bm{x}^{i})(\bm{u}^{1}(b) - \bm{x}^{i}(b)) - \bm{c}^{\T}(\bm{u}^{1}-\bm{x}^{i})(\bm{u}^{0}(b) -\bm{x}^{i}(b))\\
    &= \bm{c}^{\T}(\bm{u}^{0}-\bm{x}^{i}) - \bm{c}^{\T}(\bm{u}^{1} - \bm{x}^{i}) \\
    &= \bm{c}^{\T}(\bm{u}^{0}-\bm{u}^{1}).
\end{align*}
By assumption, $\bm{c}^{\T}(\bm{u}^{0}) > \bm{c}^{\T}(\bm{u}^{1})$, so $\bm{c}^{\T}(\bm{u}^{0} - \bm{u}^{1}) > 0$. By the same reasoning as before, since $\bm{c}, \bm{u}^{0},$ and $\bm{u}^{1}$ are integer vectors, we must then have $\kappa_{b} = \bm{c}^{\T}(\bm{u}^{0}-\bm{u}^{1}) \geq 1$. Then we again have $\kappa_{b} (c^{\ast})^{b} \geq (c^{\ast})^{b}$, so by the same argument as in the proof of Claim $2$, 
\[\sum_{j=1}^{b-1} -\kappa_{j} (c^{\ast})^{j} < \kappa_{b}(c^{\ast})^{b}.\]
This completes the proof.
  
\end{proof}

\subsection{A Simplex Implementation of any Shadow Rule}\label{sec:slim_simplex}

Let us start with describing the precise question that we want to address in this section. We are given a general LP (not necessarily a $0/1$-LP) with feasible region $Q$, and a feasible basis $B$ for the corresponding LP in standard equality form. Let $\bm x$ be the extreme point solution of $Q$ to which $B$ is associated. Recall that a Shadow rule chooses an improving edge direction $\bm z$ in the feasible cone $\mathcal C(\bm x)$ that maximizes $\frac{\bm c^\T \bm z}{\bm v^\T \bm z}$ for a chosen vector $\bm v$. However, if we have degenerate bases, the extreme directions of the basic cone $\mathcal C(B)$ associated with the basis $B$ may not coincide with the extreme directions of the feasible cone $\mathcal C(\bm x)$. Hence, we have the following question: How should we select an improving direction in $\mathcal C(B)$ in order to guarantee that we are following the same path on the 1-skeleton of $P$ traced by the Shadow rule? 

Our first observation is that if the initial basis $B_0$ satisfies a small assumption, then it is enough to consider the direction that maximizes the slope among the ones in the basic cone $\mathcal C(B_0)$. We first prove this, and later show that it is always possible to find a basis that satisfies the needed assumption via a sequence of degenerate pivots. Formally: 

\begin{thm}\label{thm:shadow_edge_to_pivot}
Given an LP with feasible region $Q$, let  $B^0$ be an initial feasible basis for the corresponding LP in standard equality form.
Let $\bm v$ be defined according to a given Shadow rule, and
assume that $B^0$ satisfies the following: for each $\bm z \in \mathcal C(B^0)$, if $ \bm c^\T \bm z >0$ then $\bm v^\T \bm z > 0$. 
 
Then by starting from $B^0$ and by selecting at each basis any improving pivot direction $\bm z \in \mathcal C(B)$ that maximizes $\frac{\bm c^\T\bm z}{\bm v^\T\bm z}$, the Simplex method follows the same path on the 1-skeleton of $Q$ as the one followed by the Shadow rule.

\end{thm}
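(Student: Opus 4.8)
The plan is to argue by induction on the steps of the Simplex method that the sequence of extreme points visited coincides with the sequence visited by the Shadow rule. The key is to show that the hypothesis on $B^0$ — that $\bm c^\T \bm z > 0$ implies $\bm v^\T \bm z > 0$ for all $\bm z \in \mathcal{C}(B^0)$ — is maintained (in an appropriate form) at every extreme point reached, so that the argument of Lemma~\ref{lem:non-degenerateperformance} can be reused at each step.

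First I would observe that the hypothesis on $B^0$ implies that when we maximize $\frac{\bm c^\T \bm z}{\bm v^\T \bm z}$ over generators of $\mathcal{C}(B^0)$, the optimization problem $\mathcal{Q}$ built exactly as in the proof of Lemma~\ref{lem:non-degenerateperformance} (with constraints $\bm v^\T \bm z \le 1$ and $\bm z \in \mathcal{C}(B^0)$) is bounded: any extreme ray of $\mathcal{C}(B^0)$ with positive $\bm c$-value must have positive $\bm v$-value, hence meets the hyperplane $\bm v^\T \bm z = 1$. Therefore the maximizer of the slope among generators of $\mathcal{C}(B^0)$ is an optimal solution of $\mathcal{Q}$, and since $\mathcal{C}(\bm x) \subseteq \mathcal{C}(B^0)$, this generator has slope at least that of every edge-direction in the feasible cone $\mathcal{C}(\bm x)$. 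In particular the pivot is non-degenerate: if it were degenerate, the chosen direction would not be an edge direction, yet it would still be the maximizer of the slope among a superset of the edge directions, and one checks (as in Lemma~\ref{lem:non-degenerateperformance}) that the true steepest edge direction of the feasible cone achieves the optimum of $\mathcal{Q}$ as well, so the Simplex step actually moves along that edge. Thus from $\bm x$ the Simplex method moves to the same neighbor $\bm x^1 = \argmax_{\bm u \in N_{\bm v}(\bm x)} \frac{\bm c^\T(\bm u - \bm x)}{\bm v^\T(\bm u - \bm x)}$ that the Shadow rule picks (breaking ties the same way, or noting that any maximizer is a valid Shadow-rule choice).

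The main obstacle, and the crux of the inductive step, is to show the hypothesis propagates: after the non-degenerate pivot we land at a new basis $B^1$ associated to $\bm x^1$, and we must verify that for all $\bm z \in \mathcal{C}(B^1)$, $\bm c^\T \bm z > 0$ implies $\bm v^\T \bm z > 0$. This is where I expect to use coherence together with the structure of the path. The clean way: by Lemma~\ref{lem:submax} (applied to $Q$ with this $\bm v$), the Shadow path is $\bm v$-monotone and $\bm c$-monotone up to optimality, and coherence means that under the projection $\pi = (\bm v^\T \bm x, \bm c^\T \bm x)$ the images of the path vertices lie on the upper path of $\pi(Q)$ with strictly decreasing slopes. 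The slope at $\bm x^1$ in the shadow is strictly less than the slope at $\bm x$. Now suppose toward a contradiction that some $\bm z \in \mathcal{C}(B^1)$ has $\bm c^\T \bm z > 0$ but $\bm v^\T \bm z \le 0$. Either $\bm z$ is an edge-direction of $Q$ at $\bm x^1$ or it corresponds to a degenerate pivot; in the former case $\bm x^1$ would have a $\bm c$-improving, $\bm v$-non-increasing neighbor, contradicting that $\bm x^1$ lies on a $\bm v$-monotone coherent path reaching the $\bm c$-maximum (the shadow would have to continue with infinite or non-positive-denominator slope, incompatible with the upper path); in the latter case $\bm z \in \mathcal{C}(B^1)\setminus\mathcal{C}(\bm x^1)$ is cut off, but then one shows the Simplex step from $B^1$ would be forced to choose this degenerate direction (it has positive $\bm c$-value and non-positive $\bm v$-value, which under the slope-maximization rule — interpreting "$\frac{\bm c^\T\bm z}{\bm v^\T\bm z}$ with non-positive denominator" as most preferred — dominates all edge directions), contradicting that the Simplex method continues to make non-degenerate progress along the path. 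Making this last dichotomy precise — pinning down exactly how the slope-maximization rule over $\mathcal{C}(B)$ treats directions with non-positive $\bm v^\T\bm z$, and ruling them out using coherence of the Shadow path — is the part that will require the most care, and I would handle it by a direct argument mirroring the $\mathcal Q$-boundedness step of Lemma~\ref{lem:non-degenerateperformance}: the invariant "$\bm c^\T \bm z > 0 \Rightarrow \bm v^\T \bm z > 0$ on $\mathcal C(B)$" is exactly the statement that $\mathcal Q$ is bounded, and boundedness of $\mathcal Q$ at $\bm x^1$ follows because the optimal face of $Q$ is reached monotonically by a coherent $\bm v$-path, so no ray of $\mathcal C(\bm x^1)$ — nor any ray of the relaxed cone $\mathcal C(B^1)$, since degenerate directions point outside $P$ — can increase $\bm c$ while not increasing $\bm v$.

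Finally I would close the induction: having shown $B^1$ satisfies the hypothesis, the same argument applies verbatim at each subsequent step, so the Simplex method with this pivot-selection rule visits exactly the vertices $\bm x^0, \bm x^1, \bm x^2, \dots$ of the Shadow path and terminates at the same $\bm c$-optimal vertex, in the same number of non-degenerate pivots. (Degenerate pivots interspersed by the lexicographic tie-breaking in the ratio test do not change the extreme point and hence do not affect which vertex-sequence is traced.) This gives the claimed equivalence of paths on the $1$-skeleton of $Q$.
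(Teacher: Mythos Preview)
Your overall plan---maintain the invariant ``$\bm c^\T\bm z>0\Rightarrow\bm v^\T\bm z>0$ on $\mathcal C(B)$'' by induction on bases, then reuse the $\mathcal Q$-argument of Lemma~\ref{lem:non-degenerateperformance}---is exactly the paper's strategy. The gap is in your propagation step.

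Your case analysis at $B^1$ handles rays of $\mathcal C(\bm x^1)$ correctly via coherence, but the degenerate case is not settled. You write that a ray $\bm z\in\mathcal C(B^1)\setminus\mathcal C(\bm x^1)$ with $\bm c^\T\bm z>0$ and $\bm v^\T\bm z\le 0$ is impossible ``since degenerate directions point outside $P$''; but pointing outside $P$ places no constraint whatsoever on the signs of $\bm c^\T\bm z$ and $\bm v^\T\bm z$. Your alternative---that the slope rule would then be forced to pick this degenerate direction, contradicting non-degenerate progress---is circular (you have not yet established non-degenerate progress) and also rests on an interpretation of the rule for non-positive denominators that the theorem statement does not supply. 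As written, nothing rules out a degenerate extreme ray of $\mathcal C(B^1)$ landing in the ``bad'' orthant.

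The missing idea, and what the paper uses, is that the \emph{previous} pivot direction controls the new basic cone. If $\bm z'$ is the direction chosen at the preceding basis $B'$, then $-\bm z'\in\mathcal C(B)$ (the reverse pivot is always available), and by the inductive hypothesis at $B'$ one has $\bm c^\T\bm z'>0$ and $\bm v^\T\bm z'>0$. Since also the next Shadow edge $\bm y\in\mathcal C(\bm x)\subseteq\mathcal C(B)$ satisfies $\bm c^\T\bm y>0$, $\bm v^\T\bm y>0$ (by Lemma~\ref{lem:submax}), the two-dimensional cone $\pi(\mathcal C(B))$ contains $\pi(-\bm z')$ in the third quadrant and $\pi(\bm y)$ in the first, and convexity then excludes the entire second quadrant $\{\bm v^\T\bm z\le 0,\ \bm c^\T\bm z\ge 0\}$. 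This single observation propagates the invariant uniformly across \emph{every} basis, degenerate or not, and does not require knowing in advance whether the current pivot will be non-degenerate. Once you have this, your $\mathcal Q$-argument goes through exactly as you sketched.

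A smaller issue: your claim that ``the pivot is non-degenerate'' at $B^0$ is not justified and is not needed. Degenerate pivots can occur; the theorem only asserts that the sequence of \emph{vertices} matches, and the paper's argument shows the invariant holds at every intermediate basis so that whenever a non-degenerate pivot does occur it matches the Shadow edge.
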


\begin{proof}
 Suppose that after a number of Simplex iterations, we have reached a basis $B$ which is not optimal, and assume that every basis $B'$ 
 visited until $B$ satisfies the property that for each $\bm z \in \mathcal C(B')$, if $ \bm c^\T \bm z >0$ then $\bm v^\T \bm z > 0$.
 Note that this is true by assumption at the initial basis $B^0$. We claim that $B$ satisfies the property as well.

Suppose that at the current iteration the Shadow rule would move along an edge-direction $\bm y \in \mathcal C(B)$ .   
 Note that since $B$ is not an optimal basis with respect to $\bm c$, and since the path has not yet reached an optimum with respect to $\bm c$, Lemma \ref{lem:submax} implies that it is also not optimal with respect to $\bm v$.  Then Lemma \ref{lem:submax} further implies 
 that $\bm v^\T \bm y > 0$, and we may assume without loss of generality that $\bm v^\T \bm y = 1$.
 Let $B'$ be the basis visited right before $B$, and $\bm z'$ be the pivot direction chosen at $B'$ by our procedure.
 We know that $\bm v^\T\bm z' > 0$  and $\bm c^\T \bm z' >0$.  Then we have that the basic cone $\mathcal{C}(B)$ contains the directions $-\bm z'$ and $\bm y$.  This implies that the projection $\pi(\mathcal{C}(B))$ contains an element whose pre-image satisfies $\bm v^\T\bm z <0$ and $\bm c^\T \bm z<0$ (namely, $\pi(-\bm z')$), and a direction satisfying $\bm v^\T\bm z > 0$ and $\bm c^\T \bm z > 0$ (namely, $\pi(\bm y)$).  
Recall that, under the projection $\pi$, $\bm v$ and $\bm c$ act as coordinate vectors for the first and second coordinate of space, respectively.  Then by convexity, the cone $\pi(\mathcal{C}(B))$ does not contain any element of the orthant of $\R^2$ defined by $\bm v^\T \bm z\leq 0$, $\bm c^\T \bm z \geq 0$.  Therefore, by the definition of $\pi$, $\mathcal{C}(B)$ does not contain any element satisfying both $\bm v^\T \bm z\leq 0$ and $\bm c^\T \bm z \geq 0$.  That is, for all elements $\bm z\in\mathcal{C}(B)$, if $\bm c^\T \bm z > 0$, then $\bm v^\T \bm z > 0$. This proves our claim.

Assume that our procedure selects $\bm{\tilde{z}}$ as pivot direction for our basis $B$.
Since $\bm v^\T \bm{\tilde{z}} > 0$, we can assume without loss of generality that $\bm v^\T \bm{\tilde{z}} =1$.  We will show that $\bm{\tilde{z}}$ is an optimal solution to the following LP we call $\cal Q$:

\begin{align*}
    &\max  \bm c^\T\bm z \\
    &\text{s.t.} \\
    &\bm v^\T \bm z = 1, \\
    &\bm z\in\mathcal C (B).
\end{align*}

We first observe that $\cal Q$ is not unbounded. Since $\bm v^\T \bm z > 0$ for all $\bm z\in\mathcal{C}(B)$ satisfying $\bm c^\T\bm z > 0$, we have that the set $Z = \set{\bm z\in \mathcal{C}(B) : \bm v^\T\bm z \leq 1,\, \bm c^\T\bm z > 0}$ is a bounded set.  The set of feasible solutions to $\cal Q$ with positive objective value is contained in the set $Z$, and so it is also a bounded set.  Therefore, $\cal Q$ is not unbounded.

Since the feasible region of $\cal Q$ is just the basic cone at $B$ intersected with a single hyperplane (which does not contain the unique vertex $\bm 0$ of $\mathcal C(B)$), all extreme point solutions of $\cal Q$ correspond to generators of extreme rays of $\mathcal C(B)$ satisfying $\bm v^\T \bm z=1$.

It follows that the optimal extreme point solution of $\cal Q$ is an extreme ray generator of $\mathcal{C}(B)$ that maximizes $\frac{\bm c^\T\bm z}{\bm v^\T \bm z}$.  That is, it is the chosen pivot direction, $\bm{\tilde{z}}$.

Assume now that $\bm{\tilde{z}}$ is a non-degenerate direction.
Since $\bm y$ is the edge-direction chosen by the  Shadow rule, and since $\bm{\tilde{z}}$ in this case is also an edge-direction, 
it follows from Definition \ref{def:slim_edge} that 
$$
\bm c^\T\bm y = \frac{\bm c^\T\bm y}{\bm v^\T \bm y} \geq \frac{\bm c^\T\bm{\tilde{z}}}{\bm v^\T \bm{\tilde{z}}} =  \bm c^\T\bm{\tilde{z}}.
$$
Note that this holds because the term $\left( \frac{\bm c^\T(\bm u - \bm x^{i})}{\bm v^\T(\bm u - \bm x^{i})} \right)$ in Definition \ref{def:slim_edge} is invariant under scaling.  However, since $\bm y$ is also feasible for $\cal Q$ and since $\bm{\tilde{z}}$ is optimal for $\cal Q$, we have that in fact $\bm c^\T \bm y = \bm c^\T \bm{\tilde{z}}$.  
That is, this non-degenerate pivot corresponds to an edge-direction that the Shadow pivot rule would choose.
Thus, the Simplex method with the Shadow pivot rule follows the same path on the 1-skeleton as the Shadow rule, as desired.
\end{proof}

It remains to argue how to find a basis $B^0$ that satisfies the property needed in our previous theorem.

\begin{lem}\label{lem:get_to_B0}
Given an LP with feasible region $Q$, let $B^0$ be an initial feasible basis which is not optimal for the corresponding LP in standard equality form.
Let $\bm v$ be defined according to a given Shadow rule, and
assume that $B^0$ does not satisfy the following: for each $\bm z \in \mathcal C(B^0)$, if $ \bm c^\T \bm z >0$ then $\bm v^\T \bm z > 0$.
Then, there exists a sequence of degenerate pivots that eventually yield a basis $B$ satisfying the above condition.
\end{lem}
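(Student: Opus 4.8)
The plan is to start from the bad basis $B^0$ and repeatedly perform \emph{degenerate} pivots that each strictly "improve" some potential function measuring how far $\mathcal C(B)$ is from satisfying the desired sign condition, until we reach a basis where no violating direction remains. First I would make precise what goes wrong: the hypothesis says there exists some $\bm z \in \mathcal C(B^0)$ with $\bm c^\T \bm z > 0$ but $\bm v^\T \bm z \le 0$. Since $\mathcal C(B^0)$ is a polyhedral cone, we may take $\bm z$ to be a generator of an extreme ray of $\mathcal C(B^0)$, i.e.\ one of the available pivot directions at $B^0$. Such a direction is improving for $\bm c$ but, because $\bm v^\T \bm z \le 0$ while the Shadow rule and Lemma~\ref{lem:submax} guarantee that every genuine $\bm c$-improving edge at the current vertex satisfies $\bm v^\T \bm z > 0$, this direction $\bm z$ cannot correspond to an actual edge-direction at $\bm x$ — so moving along it is a \emph{degenerate} pivot, keeping us at the same vertex $\bm x$ of $Q$ but changing the basis to some $B^1$.

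The key step is to show that such a degenerate pivot makes progress. Here I would fix the geometry: all degenerate pivots at $\bm x$ keep us among the finitely many bases associated to the vertex $\bm x$, and for each such basis $B$ we have $\mathcal C(\bm x) \subseteq \mathcal C(B)$. I would argue that performing the degenerate pivot along the violating extreme ray $\bm z$ of $\mathcal C(B^0)$ produces a new basic cone $\mathcal C(B^1)$ that is "closer" to the feasible cone $\mathcal C(\bm x)$ — for instance, one can track the quantity $\bm v^\T(\text{current basic feasible solution})$ which is nondecreasing and, more usefully, note that the pivot replaces the offending tight inequality corresponding to $\bm z$ by a new tight constraint of $\mathcal C(\bm x)$. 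A clean way to organize this is via a lexicographic argument: choose the entering variable to be (the slack/original variable giving) the violating direction $\bm z$ with $\bm v^\T \bm z \le 0$, $\bm c^\T \bm z > 0$, and break ties in the ratio test lexicographically (as already used in Section~\ref{sec:tableaux} to avoid cycling). The lexicographic rule guarantees the sequence of bases at $\bm x$ never repeats, so after finitely many steps we must reach a basis with no violating extreme ray left — and "no violating extreme ray" is exactly the condition "for each $\bm z \in \mathcal C(B)$, if $\bm c^\T \bm z > 0$ then $\bm v^\T \bm z > 0$," since a cone has no point in a closed sign-region iff none of its extreme rays lies there (using that the positive orthant region $\{\bm v^\T \bm z \le 0,\ \bm c^\T \bm z > 0\}$ is "convex-cone-closed enough" — more carefully, if some interior point of $\mathcal C(B)$ violated the condition, some generator would too by taking limits/faces).

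The main obstacle I anticipate is ensuring \emph{termination}: a priori the degenerate pivots could cycle among bases at $\bm x$, or could bounce between "fixing" one violating ray while "creating" another. The lexicographic pivoting rule resolves cycling in the usual way, but I will need to confirm that it is compatible with \emph{forcing} the entering variable to be a violating direction rather than the lex-optimal one; the standard proof that lexicographic Simplex terminates only constrains the leaving-variable choice, so we retain the freedom to pick the entering column, and the lexicographic leaving rule still prevents the basic feasible solution's lex-order from repeating — hence no basis repeats. A subtlety worth checking is that each degenerate pivot is indeed \emph{available} (the chosen entering direction $\bm z$ must have $\bm v^\T \bm z \le 0$, $\bm c^\T \bm z > 0$, and be an extreme ray of $\mathcal C(B)$, so it corresponds to a legal basis exchange), and that after the pivot we are still at the vertex $\bm x$ of $Q$ (true because $\bm z$ is not an edge-direction of $\mathcal C(\bm x)$, so the ratio-test step size is $0$). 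Once no violating ray survives, $B := B^{\text{final}}$ satisfies the hypothesis of Theorem~\ref{thm:shadow_edge_to_pivot}, completing the proof.
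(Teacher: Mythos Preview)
Your approach is essentially the paper's: repeatedly choose a $\bm c$-improving pivot direction with $\bm v^\T\bm z\le 0$ (necessarily degenerate, since $\bm x^0$ is the $\bm c$-maximum of the $\bm v$-minimal face and hence projects to $\bm u^0$ on the upper path), use the lexicographic leaving-variable rule to prevent cycling, and stop once no such direction exists. The paper's proof is exactly this, in three sentences; your side remarks about a potential function and about basic cones ``converging'' to $\mathcal C(\bm x)$ are not needed and are not what carries the argument---termination comes purely from lexicographic anti-cycling plus finiteness of bases at $\bm x^0$.

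One caveat worth flagging: your closing justification that ``no violating extreme ray'' implies ``no violating $\bm z\in\mathcal C(B)$'' is not correct as stated. The region $\{\bm c^\T\bm z>0,\ \bm v^\T\bm z\le 0\}$ is not closed, and a simplicial cone can meet it without any generator doing so (e.g., generators $(1,0),(0,1)$ with $\bm c=(1,0)$, $\bm v=(1,-2)$: neither generator violates, but $(1,1)$ does). The paper in fact glosses over the same step, ending with ``as desired'' once no violating \emph{pivot direction} remains; so your proof matches the paper's, but if you want to be fully rigorous you should either check that the extreme-ray version of the condition is all that Theorem~\ref{thm:shadow_edge_to_pivot} actually uses, or supply an extra argument specific to the shadow geometry.
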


\begin{proof}
Let $\bm x^0$ be the extreme point associated to $B^0$. Since $B^0$ is not an optimal basis, there exists a pivot direction $\bm z \in \mathcal C(B^0)$ such that $ \bm c^\T \bm z >0$, and we can select one such that $\bm v^\T \bm z \leq 0$. Since $\pi(\bm x^0)$ lies on the upper path of the shadow of $Q$, any $\bm c$-increasing, non-degenerate pivot direction $\bm z'$ at any basis corresponding to $\bm x^0$ satisfies $\bm v^\T \bm z' > 0$.  As such, at any basis corresponding to $\bm x^0$, any $\bm c$-increasing pivot direction satisfying $\bm v^\T\bm z\leq 0$ is degenerate pivot direction.

Then suppose that we perform a series of degenerate pivots by choosing pivot directions satisfying $\bm c^\T\bm z >0$ and, if possible, $\bm v^\T\bm z\leq 0$.  Suppose that we use the lexicographic rule~\cite{MurtyLP,Murty2009,Terlaky2009} to select the variable leaving the basis. The lexicographic rule ensures that we do not cycle, so we will eventually reach a basis $B$ at which it is not possible to pick a pivot direction satisfying both $\bm c^\T \bm z > 0$ and $\bm v^\T\bm z \leq 0$, as desired.
\end{proof}

We now turn to the issue of cycling. 
In \cite{KleeKlein}, Klee and Kleinschmidt provided a method to implement Shadow pivot rules in general without having to worry about degeneracy and cycling. In essence, they showed that for any sufficiently generic choice of objective function $\bm{c}^{\T}$ and any sufficiently generic choice of auxiliary vector $\bm v$, the Shadow pivot rule with an implementation they provide does not cycle. 
For implementing the Ordered Shadow and Slim Shadow pivot rules, we may choose $\bm{c}$ to be sufficiently generic by perturbing the objective function. Furthermore, for obtaining the Ordered Shadow pivot rule, we may perturb $\bm{v}$ by increasing the value of $c^{\ast}$ in Definition \ref{def:ordshad} by any small $\varepsilon > 0$. Thus, their implementation of Simplex yields the Ordered Shadow pivot rule if we allow an additional step of perturbing $\bm{v}$. However, for the Slim Shadow rule, such a perturbation would affect our argument for bounding the length. Hence, we may not apply their implementation in that case.

As explained in \cite{MurtyLP,Murty2009,Terlaky2009}, the lexicographic pivot rule provides a general technique to avoid cycling. When the entering variable is already chosen all we need is to select  the leaving variable lexicographically. In particular, this can be integrated with the Shadow pivot rule and we do not need to assume non-degeneracy. The lexicographic rule is exactly what is used in our method for the True Steepest-Edge pivot rule. Namely, one may attach a lexicographic rule for choosing the outgoing variable to prevent cycling. This method applies to our case for both pivot rules, since it makes no assumption about $\bm{c}$ or $\bm{v}$. Hence, the Ordered Shadow and Slim Shadow pivot rules can be implemented correctly using the lexicographic method.

However, we close this section by introducing another implementation that is a hybrid between Klee and Kleinschmidt's implementation and lexicographic. Specifically, we now show that cycling can also be avoided if (a) one uses a lexicographic pivot rule to select the leaving variable until the first non-degenerate pivot, and (b) one imposes the assumption that the objective function $\bm c$ is generic.  In particular, we require that given any two pivot directions $\bm z'$ and $\bm z''$ chosen by the Shadow pivot rule, $\frac{\bm c^\T\bm z'}{\bm v^\T \bm z'} \neq \frac{\bm c^\T\bm z''}{\bm v^\T\bm z'}$.  Note that this only needs to hold for pivot directions with $\bm v^\T\bm z>0$ since these are the only pivot directions chosen by the Shadow pivot rule.

If this assumption of genericity does not hold, it can be achieved by, as usual, randomly perturbing $\bm c$ by a small amount.  That is, by replacing $\bm c$ by a new objective function $\bm c'$ chosen uniformly at random from the $\varepsilon$-ball centered at $\bm c$ (for a sufficiently small choice of $\varepsilon$).  Note that there are finitely many possible pivot directions (when normalized so that they satisfy $\bm v^\T\bm z = 1$).  If $\bm c^\T \bm z' = \bm c^\T \bm z''$, then $\bm c^\T(\bm z' - \bm z'') = 0$.  As there are only finitely many possible pivot directions of the above form, there are finitely many possible vectors of the form $\bm w = (\bm z' - \bm z'')$ for distinct pivot directions $\bm z'$ and $\bm z''$.  Then these vectors give a finite collection of $(n-1)$-dimensional linear spaces, each defined by $\bm w^\T \bm x = 0$ for a choice of $\bm w$.  Since $\bm c$ is obtained as a random element of an $n$-dimensional set, we have that with probability 1, $\bm c$ does not lie in any of these linear spaces.  That is, $\bm c^\T(\bm z' - \bm z'') \neq 0$, as desired.

Under this genericity assumption, the particular choice of the new basis (when there are ties for the leaving variable) follows the lexicographic pivot rule until we make our first non-degenerate pivot. After that, the choice of the leaving variable is arbitrary. The following lemma implies that the Shadow pivot rule with a generic objective function does not cycle:

\begin{lem}\label{lem:decreasing_slope}
Given an LP of the form~(\ref{lp}) with a generic objective function, in each iteration of the Shadow pivot rule, the value of $\frac{\bm c^\T \bm z^j}{\bm v^\T \bm z^j}$ of the chosen pivot direction $\bm z^j$ is strictly less than that of the previous iteration.
\end{lem}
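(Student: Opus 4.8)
The plan is to analyze a single iteration in isolation. Suppose that, after reaching a basis $B^0$ of the special type supplied by Lemma~\ref{lem:get_to_B0}, the Shadow pivot rule has arrived at some non-optimal basis $B$, has chosen the improving pivot direction $\bm z\in\mathcal C(B)$ maximizing $\frac{\bm c^\T\bm z}{\bm v^\T\bm z}$, and has moved to a basis $B'$; if $B'$ is still non-optimal it next chooses a direction $\bm w\in\mathcal C(B')$ maximizing $\frac{\bm c^\T\bm w}{\bm v^\T\bm w}$, and I must show $\frac{\bm c^\T\bm w}{\bm v^\T\bm w}<\frac{\bm c^\T\bm z}{\bm v^\T\bm z}$. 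Throughout I invoke the invariant established inside the proof of Theorem~\ref{thm:shadow_edge_to_pivot}: every basis visited from $B^0$ onward, in particular $B$ and $B'$, has the property that $\bm c^\T\bm y>0$ implies $\bm v^\T\bm y>0$ for all $\bm y$ in its basic cone. Since $\bm z$ and $\bm w$ are improving, this gives $\bm v^\T\bm z>0$ and $\bm v^\T\bm w>0$, so after rescaling I may assume $\bm v^\T\bm z=\bm v^\T\bm w=1$ and set $s:=\bm c^\T\bm z=\frac{\bm c^\T\bm z}{\bm v^\T\bm z}>0$.

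The key geometric input is that $-\bm z\in\mathcal C(B')$: just as in the proof of Theorem~\ref{thm:shadow_edge_to_pivot}, undoing the basis exchange that produced $B'$ is a legitimate exchange at $B'$ whose projected pivot direction is a positive multiple of $-\bm z$. From this I claim that every improving $\bm u\in\mathcal C(B')$, normalized so that $\bm v^\T\bm u=1$, satisfies $\bm c^\T\bm u\le s$. Indeed, the point $\bm m:=\tfrac12\bm u+\tfrac12(-\bm z)$ lies in the convex cone $\mathcal C(B')$, and $\bm v^\T\bm m=\tfrac12-\tfrac12=0$ while $\bm c^\T\bm m=\tfrac12(\bm c^\T\bm u-s)$; if $\bm c^\T\bm u>s$ then $\bm m\in\mathcal C(B')$ would satisfy $\bm c^\T\bm m>0$ and $\bm v^\T\bm m\le 0$, contradicting the invariant at $B'$. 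Applying this to $\bm u=\bm w$ yields $\frac{\bm c^\T\bm w}{\bm v^\T\bm w}\le s$.

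It remains to rule out equality, and here the genericity of $\bm c$ is essential. Suppose $\frac{\bm c^\T\bm w}{\bm v^\T\bm w}=s$. Then $\bm w$ is not a scalar multiple of $\bm z$: a negative multiple is impossible because $\bm v^\T\bm w>0$, and $\bm w=\lambda\bm z$ with $\lambda>0$ would force $\bm z\in\mathcal C(B')$, so that $\mathcal C(B')$ would contain both $\bm z$ and $-\bm z$, contradicting that the basic cone $\mathcal C(B')$ is pointed (since $A\bm z=\bm 0$ together with $D^{B'}\bm z=\bm 0$ admits only $\bm z=\bm 0$, the cone $\mathcal C(B')$ contains no line). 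Hence $\bm w$ and $\bm z$ are distinct normalized pivot directions, both chosen by the Shadow pivot rule, so the genericity hypothesis forces $\frac{\bm c^\T\bm w}{\bm v^\T\bm w}\ne\frac{\bm c^\T\bm z}{\bm v^\T\bm z}=s$, a contradiction. Therefore $\frac{\bm c^\T\bm w}{\bm v^\T\bm w}<s$, as claimed.

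The step I expect to be the main obstacle is this last one: the convex-combination argument only delivers the weak inequality, and upgrading it to a strict one genuinely requires \emph{both} ingredients, namely the genericity of $\bm c$ (two distinct chosen pivot directions cannot share a slope) and the pointedness of the basic cone (to forbid $\bm w$ from being literally the same direction as $\bm z$, a case genericity alone does not exclude). A secondary point to handle cleanly is the scoping of ``each iteration'': the invariant from Theorem~\ref{thm:shadow_edge_to_pivot} only holds once the procedure has reached a basis $B^0$ as in Lemma~\ref{lem:get_to_B0}, so the statement concerns the iterations of the Shadow pivot rule proper, the preceding degenerate pivots being kept acyclic by the lexicographic tie-breaking rule.
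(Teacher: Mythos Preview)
Your proof is correct and follows essentially the same approach as the paper: both use that $-\bm z\in\mathcal C(B')$ together with convexity to obtain the weak inequality, and then invoke the genericity assumption to upgrade it to a strict one. The only cosmetic difference is that the paper phrases the convexity step in terms of the two-dimensional projection $\pi(\mathcal C(B'))$ (the chosen direction yields an extreme ray of this planar cone, and the presence of $\pi(-\bm z)$ forces the maximal slope to be at most $s$), whereas you carry out the equivalent argument directly via the midpoint $\bm m=\tfrac12\bm u-\tfrac12\bm z$ and the invariant from Theorem~\ref{thm:shadow_edge_to_pivot}; your version is in fact more explicit about why the cone is pointed and why $\bm w$ cannot coincide with $\bm z$.
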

\begin{proof}
Consider an arbitrary iteration at a basis $B$.  Assume without loss of generality that for each generator $\bm z^{i}$ of $\mathcal{C}(B)$, if $\bm v^\T \bm z^{i} > 0$, then $\bm v^\T \bm z^{i} = 1$.  Note that under this assumption, we seek to show that the value of $\bm c^\T \bm z^j$ is strictly less than that of the previous iteration.

Recall that the pivot direction $\bm z^j$ chosen by the Shadow pivot rule always has the property that $\pi(\bm z^j)$ is an extreme ray of $\pi(\mathcal{C}(B))$, where $\pi$ is the projection defined in Section~\ref{sec:shadow_general}.  For an element $\bm z$ of $\mathcal{C}(B)$, $\frac{\bm c^\T \bm z}{\bm v^\T \bm z}$ is precisely the slope of the ray of $\pi(\mathcal{C}(B))$ generated by $\pi(\bm z)$, and by the convexity of $\pi(\mathcal{C}(B))$, the slopes of consecutive pivot directions are non-increasing.  By our assumption of genericity, the slopes of consecutive pivot directions are not equal, and so they are decreasing, as desired.
\end{proof}

Together, Lemma \ref{lem:get_to_B0} and Lemma \ref{lem:decreasing_slope} allow us to find our initial basis and ensure that we do not cycle.  This allows us to prove Theorem \ref{thm:slim_shad_Simplex} 
and Theorem \ref{thm:polynomial_pivots2}.

\begin{proof}[Proof of Theorems \ref{thm:slim_shad_Simplex} and \ref{thm:polynomial_pivots2}]
The number of non-degenerate pivots performed by the Slim (resp. Ordered) Shadow pivot rule is precisely the number of edges in the path it takes on the 1-skeleton of the feasible region.  It follows from Theorems ~\ref{thm:slimshadbound} (resp.~\ref{thm:ordered}) and Theorem \ref{thm:shadow_edge_to_pivot} that the number of non-degenerate pivots is therefore at most $n$ (resp. $d$).
\end{proof}

We take a moment to remark upon the similarities in the definitions of the True Steepest-Edge pivot rule and the Slim Shadow pivot rule.  Their initial choices of the vector $\bm v$ coincide when they are given the same initial solution, and there are only two main differences in their definitions:
\begin{enumerate}
    \item The True Steepest-Edge pivot rule updates $\bm v$ at each new extreme point solution, while the Slim Shadow rule does not, and
    \item The True Steepest-Edge pivot rule may choose a pivot direction $\bm z$ satisfying $\bm v^\T \bm z \leq 0$, while the Slim Shadow rule will not (except possibly during degenerate pivots at the initial extreme point solution).
\end{enumerate}

It is interesting that these two pivot rules bear such similarity, especially considering their unrelated origins.

\section{Conclusions, Connections, and Comparisons}
\label{sec:conclusions}

We wish to remark that there are several examples of well-known combinatorial algorithms that turn out to use exactly the same choice of improving steps as the pivot rules presented in this paper, either in general or in special cases. 
In fact, while Theorem~\ref{thm:polynomial_pivots1} only shows that the True Steepest-Edge pivot rule reaches an optimal solution within a strongly-polynomial number of non-degenerate steps, one can get a more refined bound on the number of steps for some well-known classes of polytopes, by realizing that classical algorithms for famous combinatorial optimization problems can be interpreted as moving along steepest edges on the 1-skeleton of the $0/1$ polytope given by the set of feasible solutions.

The first example is the \emph{shortest augmenting path algorithm} for the maximum matching and maximum flow problems.
Specifically, the seminal work of Edmonds and Karp in ~\cite{10.1145/321694.321699} gave the first strongly-polynomial time algorithm for the 
maximum flow problem, showing that one can augment a given flow using augmenting paths of shortest possible length (i.e., with the minimum number of edges). 
Since then, the idea of using shortest augmenting paths has been widely used in various contexts, such as for the maximum matching problem. Note that 
augmenting a given matching by switching the edges along an augmenting path corresponds to moving between adjacent extreme points of the matching polytope~\cite{CHVATAL1975138}. Therefore, computing a maximum matching using the shortest augmenting path algorithm corresponds (from a polyhedral 
perspective) to moving along steepest edge-directions on the 1-skeleton of the matching polytope.

Another example is the \emph{minimum mean cycle canceling algorithm} by Goldberg and Tarjan~\cite{10.1145/76359.76368}.  
This algorithm finds a minimum cost circulation in a directed graph by pushing flow along cycles whose ratio of cost to number of edges is minimal. 
In general, pushing flow along cycles corresponds to moving along \emph{circuit}-directions of the corresponding circulation polytope and, as 
proved in ~\cite{deloera2020pivot}, for $0/1$ polytopes a circuit-direction whose ratio of cost to its 1-norm is minimal corresponds to a steepest edge-direction 
at a given vertex. Hence, computing a minimum cost circulation using the minimum mean cycle canceling algorithm corresponds (from a polyhedral perspective) 
to moving along steepest edge-directions on the 1-skeleton of the $0/1$ circulation polytope.

Similarly, the paths followed by the modified Shadow pivot rules specialize to well known optimization algorithms. 
Consider the \emph{greedy algorithm for optimization on matroids}. Denote by $\mathcal{I}$ be the set of independent sets of a matroid 
on a ground set $E$. Recall the $0/1$ matroid polytope associated 
to $\mathcal{I}$ is $P_{\mathcal{I}} = \text{conv}\left\{ \sum_{s \in S} \bm{e}_{s}: S \in \mathcal{I} \right\}.$

Consider the linear program $\text{max}(\bm{c}^{\T} \bm x: \bm x \in P_{\mathcal{I}})$ for a matroid polytope $P_{\mathcal{I}}$ on a ground set $E$. Let $[\bm{0} = \bm x^{0} , \bm x^{1} , \dots , \bm x^{k}]$  the path followed by the Slim Shadow rule for this LP. Let $\emptyset = S_{0} \subsetneq S_{1} \subsetneq \dots \subsetneq S_{k}$ be the sequence of subsets chosen by the greedy algorithm. Then our goal is to show that $S_{i} = \supp(\bm x^{i})$ for all $0 \leq i \leq k$. At $S_{i}$, the greedy algorithm then says to add the highest weight element $j$ not in $S_{i}$ such that $S_{i} \cup \{j\}$ is still independent. Similarly, the Slim Shadow vertex pivot rule chooses $\bm{x}^{i+1}$ as follows:

\[\bm x^{i+1} = \argmax_{\bm u\in N_{\bm{1}}(\bm{x}^{i})} \frac{\mbc(\bm u - \bm x^{i}) }{ \bm{1}^{\T}(\bm u - \bm x^{i}) } = \argmax_{\bm u\in N_{\bm{1}}(\bm{x}^{i})} \mbc(\bm u - \bm x^{i}).\]
All $\bm{1}$-improving neighbors of $\bm{x}^{i}$ are given by $\bm{x}^{i} + \bm{e}_{j}$ for some $j \notin \text{supp}(\bm x^{i})$ such that $\bm{x}^{i} + \bm e_{j}$ is still a vertex of $P$. Thus, $\bm{x}^{i+1}$ is given by maximizing $\bm{c}^{\T}(\bm{u} - \bm{x}^{i}) = \bm{c}^{\T} \bm e_{j} = \bm c(j)$ over all possible choices of $j$, which yields the result. The greedy algorithm also reflects the path chosen by True Steepest-Edge. At a greedily chosen vertex, all improving neighbors again correspond to adding some $\bm{e}_{j}$. Normalizing does not change the weight, so True Steepest-Edge corresponds also to maximizing $\bm c(j)$ over all options for $j$.

Special cases of the Ordered Shadow pivot rule paths also appear in the literature. Consider the stable set polytopes of the
complements of chordal graphs. This $0/1$  polytope is the convex hull of all $0/1$ incidence vectors of the cliques $\mathcal{S}(G)$ on a
chordal graph $G$, i.e., $P_{\mathcal{S}(G)} = \text{conv}(\left\{\sum_{s \in S} \bm{e}_{s}: S \in \mathcal{S}(G)\right\} )$ (see \cite{PerfElim}). 
Note that because chordal graphs are perfect, there is a complete inequality description of $P_{\mathcal{S}}$ using only clique inequalities.
It is also well-known that a graph is chordal if and only if it has a \emph{perfect elimination ordering} of its vertices \cite{PerfElim}, namely an ordering of the 
vertices of the graph such that, for each vertex $v$, $v$ and the neighbors of $v$ that occur after $v$ in the order form a clique. This can be 
interpreted as a sequence of cliques of increasing sizes. Thus, one can use the perfect elimination orderings to obtain a maximum-size clique of a 
chordal graph in polynomial-time. Furthermore, it was shown in \cite{PerfElimEff} that such an ordering may be found efficiently. One can check 
that the sequence of vertices obtained by the perfect elimination ordering coincides with the steps taken by the Ordered Shadow 
rule so long as the perfect elimination ordering coincides with the ordering of the indices in the corresponding 0/1 polytope.

In \cite{Kortenkampetal97}, the authors showed that there exist two-dimensional projections of $0/1$ polytopes with exponentially many vertices. Hence, the original Shadow pivot rule may take an exponential number of iterations. Since the Slim Shadow rule requires a number of steps bounded by $n$, the dimension of the ambient space, this suggests the question: Is there an example in which the length of the path chosen by the Slim Shadow rule is exponential in the dimension $d$ of the feasible region? Unfortunately, yes. The example in \cite{Kortenkampetal97} can be modified to yield an explicit set of LPs for which the Slim Shadow rule requires a number of steps exponential in $d$ (while of course still being bounded by the number of variables).

There is no canonical, universal winner on performance between our two Shadow rules. At least when applied as we described them here,
in some cases the Slim Shadow rule may actually perform better than the Ordered Shadow rule. In particular, the bounds from sparsity in Lemma \ref{lem:sparsity} may be stronger than the dimension bound. To see this note that the Birkhoff polytope for $n \times n$ permutation matrices has dimension $(n-1)^2$. This is the bound on the number of steps for the Ordered Shadow rule, yet the Slim Shadow rule achieves a bound of only $n$ steps by Lemma \ref{lem:sparsity}. 

One can ask, how good are the bounds we obtain on the length of monotone paths compared to the optimal bounds? Let us compare the Slim Shadow rule in a few instances: 

\begin{itemize}
    \item The rule yields at most $n$ steps on both the asymmetric and symmetric traveling salesman polytopes for $n$ vertex graphs. The respective optimal bounds are $\lfloor n/2 \rfloor$ and $\lfloor n/3 \rfloor$ (see \cite{rispoli1998}).
    \item The rule yields at most $n$ steps on the Birkhoff polytope for $n \times n$ matrices. The optimal bound is $\lfloor n/2 \rfloor$  (\cite{rispoli1992}).
    \item The rule yields at most $\lfloor n/2 \rfloor$ steps for the perfect matching polytope on the complete graph with $n$ vertices. The
    optimal bound is $\lfloor n/4 \rfloor$ (\cite{rispoli1992}).
    \item The rule yields at most $\text{rank}(\mathcal{M})$ for an independent set matroid polytope of a matroid $\mathcal{M}$ (starting at $\bm{0}$). As we saw this matches the optimal bound.
\end{itemize}

Thus the lengths we obtained are \textendash\, up to a constant \textendash\, the same as the actual monotone diameter in 
all of the above cases. Note that the bounds we find do not require any knowledge of the combinatorics of the graphs of any of these polytopes, 
yet they remain not far off from the best possible bounds.

To conclude, we stress that the analysis we did in this paper is only about bounding the number of \textit{edge steps} and \emph{non-degenerate pivots}. 

\begin{question} 
Is there a polynomial bound to the number of  \emph{degenerate} pivots performed by any of the pivot rules we presented when applied to $0/1$-LPs?
\end{question}

\paragraph{Acknowledgments} The third author is grateful for the support received from the NWO-VIDI grant VI.Vidi.193.087.
The first and second authors are grateful for the support received through NSF grants DMS-1818969 and the NSF GRFP.

\bibliographystyle{plain}
\bibliography{01simplex.bib}
\end{document}